\renewcommand{\le}{\leqslant}
\renewcommand{\ge}{\geqslant}
\renewcommand{\le}{\leqslant}
\renewcommand{\ge}{\geqslant}
\newcommand{\ptl}{\partial}
\newcommand{\rr}{{\mathbb{R}}}
\newcommand{\la}{\lambda}
\newcommand{\hh}{H}
\newcommand{\h}{H}
\newcommand{\esf}{\mathbb{S}}
\newcommand{\nn}{\mathbb{N}}
\newcommand{\pp}{P}
\newcommand{\Om}{\Omega}
\newcommand{\eps}{\varepsilon}
\newcommand{\vol}[1]{|#1|}
\newcommand{\cl}[1]{\text{\rm cl}(#1)}
\newcommand{\clb}{\mkern2mu\overline{\mkern-2mu B\mkern-2mu}\mkern2mu}
\newcommand{\cld}{\mkern2mu\overline{\mkern-2mu D\mkern-2mu}\mkern2mu}
\DeclareMathOperator{\divv}{div}
\DeclareMathOperator{\intt}{int}
\DeclareMathOperator{\Lip}{Lip}
\DeclareMathOperator{\inr}{inr}
\DeclareMathOperator{\diam}{diam}
\newtheorem{theorem}{Theorem}[section]
\newtheorem{proposition}[theorem]{Proposition}
\newtheorem{lemma}[theorem]{Lemma}
\newtheorem{corollary}[theorem]{Corollary}
\theoremstyle{definition}
\newtheorem{remark}[theorem]{Remark}
\theoremstyle{remark}
\newenvironment{enum}{\begin{enumerate}
}{\end{enumerate}}
\numberwithin{equation}{section}
\begin{document}

\title[Isoperimetric inequalities in convex cylinders and cylindrically bounded convex bodies]{Isoperimetric inequalities in convex cylinders and cylindrically  bounded convex bodies}

\author[M.~Ritor\'e]{Manuel Ritor\'e} \address{Departamento de
Geometr\'{\i}a y Topolog\'{\i}a \\
Universidad de Granada \\ E--18071 Granada \\ Espa\~na}
\email{ritore@ugr.es}
\author[S.~Vernadakis]{Efstratios Vernadakis} \address{Departamento de
Geometr\'{\i}a y Topolog\'{\i}a \\
Universidad de Granada \\ E--18071 Granada \\ Espa\~na}
\email{stratos@ugr.es}

\date{\today}

\thanks{Both authors have been supported by MICINN-FEDER grant MTM2010-21206-C02-01, and Junta de Andaluc\'{\i}a grants FQM-325 and P09-FQM-5088}

\begin{abstract}
In this paper we consider the isoperimetric profile of convex cylinders $K\times\rr^q$, where $K$ is an $m$-dimensional convex body, and of cylindrically bounded convex sets, i.e, those with a relatively compact orthogonal projection over some hyperplane of $\rr^{n+1}$, asymptotic to a right convex cylinder of the form $K\times\rr$, with $K\subset\rr^n$. Results concerning the concavity of the isoperimetric profile, existence of isoperimetric regions, and geometric descriptions of isoperimetric regions for small and large volumes are obtained.
\end{abstract}

\subjclass[2000]{49Q10, 49Q20, 52B60}
\keywords{Isoperimetric inequalities, isoperimetric profile, unbounded convex bodies, convex cylinders}

\maketitle

\thispagestyle{empty}

\bibliographystyle{abbrv}


\section{Introduction}
In these notes we consider the \emph{isoperimetric problem} of minimizing perimeter under a given volume constraint inside a \emph{cylindrically bounded convex body}, an unbounded closed convex set $C\subset\rr^{n+1}$ with interior points and relatively compact projection onto the hyperplane $x_{n+1}=0$. The perimeter considered here will be the one relative to the interior of $C$. A way to deal with this isoperimetric problem is to consider the \emph{isoperimetric profile} of $C$, i.e., the function assigning to each $v>0$ the infimum of the perimeter of the sets inside $C$ of volume $v$. If this infimum is achieved for some set, this will be called an \emph{isoperimetric region}. The isoperimetric profile can be understood as an optimal isoperimetric inequality on $C$.


A cylindrically bounded convex set is always included and asymptotic, in a sense to be precised later, to a \emph{convex right cylinder}, a set of the form $K\times \rr$, where $K\subset \rr^n$ is a convex body. Here we have identified $\rr^n$ with the hyperplane $x_{n+1}=0$ of $\rr^{n+1}$. In this work we first consider the more general convex cylinders of the form $C=K\times\rr^q$, where $K\subset\rr^m$ is an arbitrary convex body with interior points, and $\rr^m\times\rr^q=\rr^{n+1}$, and prove a number of results for their isoperimetric profiles. No assumption on the regularity of $\ptl C$ will be made. Existence of isoperimetric regions is obtained in Proposition~\ref{prp:exicyl} following the scheme of proof by Galli and Ritoré \cite{gr}, which essentially needs a uniform local relative isoperimetric inequality \cite{rv1}, a doubling property on $K\times \rr^q$ given in Lemma~\ref{lem:doubling}, an upper bound for the isoperimetric profile of $C$ given in \eqref{prp:ICleICmin}, and a well-known deformation controlling the perimeter in terms of the volume. A proof of existence of isoperimetric regions in Riemannian manifolds with compact quotient under their isometry groups was previously given by Morgan \cite{morgmt}. Regularity results in the interior follow from Gonzalez, Massari and Tamanini \cite{MR684753} and Morgan \cite{MR1997594}, but no boundary regularity result is known for general convex bodies. We also prove in Proposition~\ref{prp:Concavdeperfcyldro} that the isoperimetric profile $I$ of a convex cylinder, as well as its power $I^{(n+1)/n}$, are concave functions of the volume, a strong result that implies the connectedness of isoperimetric regions. Further assuming $C^{2,\alpha}$ regularity of the boundary of $C$, we prove in Theorem~\ref{thm:sz} that, for an isoperimetric region $E\subset C$, either the closure of $\ptl E\cap\intt(C)$ is connected, or $E\subset K\times\rr$ is a slab. This follows from the connectedness of isoperimetric regions and from the results by Stredulinsky and Ziemer \cite{st-zi}. Next we consider small and large volumes. 
For small volumes, following Ritoré and Vernadakis \cite{rv1}, we show in Theorem~\ref{thm:optinsmalvol} that the isoperimetric profile of a convex cylinder for small volumes is asymptotic to the one of its narrowest tangent cone. As a consequence, we completely characterize the isoperimetric regions of small volumes in a convex prism, i.e, a cylinder $P\times \rr^q$ based on a convex polytope $P\subset\rr^m$. Indeed, we show in Theorem~\ref{thm:polytops} that the only isoperimetric regions of sufficiently small volume inside a convex prism are geodesic balls centered at the vertices with  tangent cone of the smallest possible solid angle. For large volumes, we shall assume that $C$ is a \emph{right} convex cylinder, i.e., $p=1$. Adapting an argument by Duzaar and Stephen \cite{du-st} to the case when $\ptl K$ is not smooth, we prove in Theorem~\ref{thm:isocyl} that for large volumes the only isoperimetric regions in $K\times\rr$ are the slabs $K\times I$, where $I\subset\rr$ is a compact interval. The case $K\times\rr^q$, with $q>1$, is  more involved and will be treated in a different paper (see \cite{rv2} for a proof for the Riemannian product $M\times\rr^k$, where $M$ is a compact Riemannian manifold without boundary). 

In the second part of this paper we apply the previous results for right convex cylinders to obtain properties of the isoperimetric profile of cylindrically bounded convex bodies. In Theorem~\ref{thm:asym-profile} we show that the isoperimetric profile of a cylindrically bounded convex body $C$ approaches, when the volume grows, that of its asymptotic half-cylinder. We also show the continuity of the isoperimetric profile in Proposition~\ref{prp:coniclycontis}. Further assuming $C^{2,\alpha}$ regularity of both the cylindrically bounded convex body $C$ and of its asymptotic cylinder, we prove the concavity of $I_C^{(n+1)/n}$ and existence of isoperimetric regions of large volume in Proposition~\ref{prp:excylindical}. Our final result, Theorem~\ref{thm:thmcyli}, implies that translations of isoperimetric regions of  unbounded volume converge in Hausdorff distance to a half-slab in the asymptotic half-cylinder. The same convergence result holds for their free boundaries, that converge in Hausdorff distance to a flat $K\times\{t\}$, $t\in  \rr^+$. Theorem~\ref{thm:thmcyli} is obtained from a clearing-out result for isoperimetric regions of large volume proven in Theorem~\ref{thm:leon rigot lem 42cyl} and its main consequence, lower density estimates for isoperimetric regions of large volume given in Proposition~\ref{prp:lodenbn}. Such lower density bounds provide an alternative proof of Theorem~\ref{thm:isocyl}, given in  Corollary~\ref{cor:isocyl}.

We have organized this paper into four sections. The next one contains basic preliminaries, while Sections~\ref{sec:cylinders} and \ref{sec:cylindrically} cover the already mentioned results for cylinders and cylindrically bounded sets, respectively.

\section{Preliminaries}
\label{sec:preliminaries}
A \emph{convex body} is a compact convex set with non-empty interior. If compact is replaced by closed and unbounded, we get an \emph{unbounded convex body}. We refer to Schneider's monograph \cite{sch} for background on convex sets.

The $s$-dimensional Hausdorff measure in $\rr^{n+1}$ will be denoted by $H^s$, for any $s\in\nn$. For $E \subset C$, the relative boundary of $E$ in the interior of $C$ is $\ptl_{C}E=\ptl E\cap \intt {C}$. The $(n+1)$-dimensional Hausdorff measure of $E$, $H^{n+1}(E)$ will be denoted by $\vol{E}$ and referred to as the \emph{volume} of $E$. Moreover, for every $x\in C$ and $r>0$ we shall define the intrinsic open ball $B_C(x,r)=B(x,r)\cap \intt {C}$, where $B(x,r)$ denotes the open Euclidean geodesic ball centered at $x$ of radius $r$. The closure of a set $E\subset\rr^{n+1}$ will be denoted by $\cl{E}$.

We also define the relative \emph{perimeter} of $ E$ in the interior of $C$ by
\[
\pp_C(E) = \sup \Big \{ \int_E\divv \xi\, d{\h}^{n+1}, \xi \in \Gamma_0(C) ,\, |\xi| \le 1 \Big \},
\]
where $\Gamma_0(C)$ is the set of smooth vector fields with compact support in $\intt {C}$. Observe that we are only computing the ${H}^n$-measure of $\ptl E$ inside the interior of $C$.  We shall say that $E$ has finite perimeter in the interior of $C$, or simply that $E\subset C$ has finite perimeter, if $\pp_C(E)<\infty$. We refer the reader to Maggi's monograph \cite{MR2976521} for background on finite perimeter sets.

If $C,C'\subset\rr^{n+1}$ are convex bodies (possible unbounded) and $f:C\to C'$ is a Lipschitz map, then, for every $s>0$ and $E\subset C$, we get $\h^s(f(E)) \le \Lip(f)^s\, \h^s(E)$. Furthermore, $f(\ptl_{C}E)=\ptl_{f(C)}(f(E))$. Thus we obtain
\begin{lemma}
\label{lem:bilip}
Let $C$, $C'\subset\rr^{n+1}$ be (possibly unbounded) convex bodies, and $f:C\to C'$  a bilipschitz map. Then we have
\begin{equation}
\label{eq:bilip}
\begin{split}
\Lip(f ^{-1})^{-n}\,\pp_C(E) \le \pp_{f(C)}(f(E)) \le \Lip(f)^n\, \pp_C(E)
\\
\Lip(f ^{-1})^{-(n+1)}\,\vol{E} \le \vol{f(E)} \le \Lip(f)^{n+1}\,\vol{E}.
\end{split}
\end{equation}
\end{lemma}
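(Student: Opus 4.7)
The two volume inequalities are immediate from the general Hausdorff--Lipschitz bound $\h^s(f(E))\le\Lip(f)^s\,\h^s(E)$ recalled just before the statement, applied with $s=n+1$ first to $f$ and then to $f^{-1}:f(C)\to C$. Indeed, $\vol{f(E)}\le\Lip(f)^{n+1}\vol{E}$ directly, while $\vol{E}=\vol{f^{-1}(f(E))}\le\Lip(f^{-1})^{n+1}\vol{f(E)}$ rearranges to the remaining bound.

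For the perimeter estimates, my plan is to pass through the reduced boundary. For any $E\subset C$ of finite perimeter (otherwise the bound is vacuous) one has $\pp_C(E)=\h^n(\ptl^\ast E\cap\intt C)$, where $\ptl^\ast E$ is the reduced boundary. Since $f$ is a bilipschitz homeomorphism of $C$ onto $f(C)$, it carries $\intt C$ onto $\intt f(C)$. By Rademacher's theorem $df_x$ exists almost everywhere and, because $\Lip(f^{-1})<\infty$, it is invertible almost everywhere; a standard density-point argument then shows that $f$ sends $\ptl^\ast E$ onto $\ptl^\ast f(E)$ up to an $\h^n$-null set. Applying the Hausdorff--Lipschitz bound with $s=n$ yields
\[
\pp_{f(C)}(f(E))=\h^n\bigl(\ptl^\ast f(E)\cap\intt f(C)\bigr)=\h^n\bigl(f(\ptl^\ast E)\cap\intt f(C)\bigr)\le\Lip(f)^n\,\pp_C(E).
\]
The reverse inequality is obtained by feeding $f^{-1}$ and $f(E)$ into the same chain of equalities.

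The only step that is not pure bookkeeping is the bilipschitz invariance of the reduced boundary modulo an $\h^n$-null set; this is classical and follows from the area formula for Lipschitz maps together with Rademacher's theorem and the almost-everywhere invertibility of $df$. I expect this to be the main (small) obstacle: once it is accepted, both the volume and the perimeter parts of the lemma reduce to the two identities displayed immediately before the statement.
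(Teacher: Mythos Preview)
Your argument is correct and follows essentially the same approach as the paper. The paper's own proof consists only of the two sentences immediately preceding the lemma (the Hausdorff--Lipschitz bound $\h^s(f(E))\le\Lip(f)^s\h^s(E)$ together with $f(\ptl_C E)=\ptl_{f(C)}(f(E))$), applied with $s=n$ and $s=n+1$; your version is in fact more careful, since you work with the reduced boundary $\ptl^\ast E$ rather than the topological relative boundary $\ptl_C E$, which is what is actually needed to identify $\pp_C(E)$ with an $\h^n$-measure.
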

\begin{remark}
\label{rem:lipcomp}
 If $M_ i$, $i=1,2,3$ are metric spaces and $f_i: M_i\to M_{i+1}$, $i=1,2$ are lipschitz maps, then $\Lip(f_2\circ f_1)\le \Lip(f_1)\Lip(f_2)$. Consequently if $g: M_1\to M_2$ is a bilipschitz map, then $1\le \Lip(g)\Lip(g ^{-1})$.
\end{remark}

Given a (possibly unbounded) convex body, we define the \emph{isoperimetric profile} of $C$ by
\begin{equation}
\label{eq:profile}
I_C(v)=\inf \Big \{ \pp_C(E) : E \subset C, \vol{E} = v \Big \}.
\end{equation}
We shall say that $E \subset C$ is an \emph{isoperimetric region} if $\pp_C(E)=I_C(\vol{E})$. The \emph{renormalized isoperimetric profile} of $C$ is given by
\begin{equation}
\label{eq:renprofile}
I_C^{(n+1)/n}.
\end{equation}
Lower semicontinuity of perimeter and standard compactness results for finite perimeter sets imply that isoperimetric regions exist in a fixed bounded subset of Euclidean space.

The known results on the regularity of isoperimetric regions are summarized in the following Lemma. One can always assume that a representative of an isoperimetric region is chosen so that it is closed, includes its points of density one and does not contain the points of density zero.

\begin{lemma}[{\cite{MR684753}, \cite{MR862549}, \cite[Thm.~2.1]{MR1674097}}]
\label{lem:n-7}
\mbox{}
Let $C\subset \rr^{n+1}$ be a (possible unbounded) convex body and $E\subset C$ an isoperimetric region.
Then $\ptl_C E = S_0\cup S$, where  $S_0\cap S=\emptyset$ and
\begin{enum}
\item $S$ is an embedded $C^{\infty}$ hypersurface of constant mean curvature.\item $S_0$ is closed and $H^{s}(S_0)=0$ for any $s>n-7$.
\end{enum}
Moreover, if the boundary of $C$ is of class $C^{2,\alpha}$ then $\cl{\ptl E\cap\intt(C)}=S\cup S_0$, where
\begin{enum}
\item[(iii)] $S$ is an embedded $C^{2,\alpha}$ hypersurface of constant mean curvature.
\item[(iv)] $S_0$ is closed and $H^s(S_0)=0$ for any $s>n-7$.
\item[(v)] At points of $ S \cap \ptl C$,  $ S$ meets $\ptl C$ orthogonally.
\end{enum}
\end{lemma}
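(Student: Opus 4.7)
The plan is to reduce the statement to the standard regularity theory for almost-minimizers of the perimeter functional, so that the three quoted results can be invoked in sequence. The first step is to show that an isoperimetric region $E\subset C$ is a $(\Lambda,r_0)$-minimizer of $\pp_C$ in the sense of Tamanini: there exist constants $\Lambda,r_0>0$ such that $\pp_C(E)\le \pp_C(F)+\Lambda\,\vol{E\mathbin{\triangle}F}$ for every $F\subset C$ with $E\mathbin{\triangle}F$ contained in a ball of radius $r_0$. To see this one fixes a ball $B(x_0,\rho)\subset\intt(C)$ with $H^n(\ptl_C E\cap B(x_0,\rho))>0$ and a smooth vector field $X$ compactly supported in $B(x_0,\rho)$ whose flow moves volume across $\ptl_C E$ with nonzero rate. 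A standard implicit-function argument then restores any small volume perturbation at a perimeter cost bounded by a constant multiple of $|\vol{F}-\vol{E}|$, and this constant plays the role of $\Lambda$.

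Once $E$ is identified as a $(\Lambda,r_0)$-minimizer, the interior statements (i)--(ii) follow directly from Gonzalez--Massari--Tamanini \cite{MR684753}: the reduced boundary $\partial^* E\cap\intt(C)$ is a $C^{1,\alpha}$ hypersurface $S$, and the singular set $S_0:=(\ptl_C E)\setminus S$ is closed with $\h^s(S_0)=0$ for $s>n-7$ by Federer's dimension-reduction argument together with the Simons-type classification of tangent cones. Since the first variation of $\pp_C$ under volume preservation gives $S$ constant mean curvature $H_0$, an elliptic Schauder bootstrap on the graph equation $\divv(\nabla u/\sqrt{1+|\nabla u|^2})=(n+1)H_0$ upgrades the regularity of $S$ from $C^{1,\alpha}$ to $C^\infty$.

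For the boundary regularity assertions (iii)--(v), the hypothesis that $\ptl C\in C^{2,\alpha}$ allows one to apply Grüter's free-boundary regularity \cite{MR862549} and the formulation by Stredulinsky--Ziemer \cite[Thm.~2.1]{MR1674097}: at every boundary point $p\in\ptl C\cap\cl{\ptl E\cap\intt(C)}$ where the density of $E$ along $\ptl C$ is close to $1/2$, one obtains $C^{1,\alpha}$ regularity of $\cl{\ptl E\cap\intt(C)}$ up to $\ptl C$ together with the orthogonality condition, which is the natural Neumann condition from the first variation with vector fields tangent to $\ptl C$. Schauder theory applied to the CMC graph equation with this oblique-derivative condition and $C^{2,\alpha}$ barrier yields the $C^{2,\alpha}$ regularity in (iii); the same dimension-reduction machinery, extended to the boundary case in \cite{MR1674097}, gives the Hausdorff dimension bound (iv).

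The main obstacle is the passage from interior to boundary regularity: for a general convex body $C$ there is no regularity theory up to $\ptl C$, since $\ptl C$ need only be Lipschitz and the standard Schauder and monotonicity arguments break down at non-smooth points of $\ptl C$. This is precisely why the lemma separates the statements into an unconditional interior part (i)--(ii) and a conditional boundary part (iii)--(v) requiring $C^{2,\alpha}$ regularity of $\ptl C$.
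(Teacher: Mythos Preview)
The paper gives no proof of this lemma; it is stated as a compilation of known regularity results with the three references attached, and is used as a black box thereafter. Your proposal is a correct and well-organized unpacking of exactly what those references provide: the almost-minimality via a volume-fixing deformation, the interior $C^{1,\alpha}$ regularity and dimension estimate from Gonzalez--Massari--Tamanini followed by the Schauder bootstrap to $C^\infty$, and the free-boundary $C^{2,\alpha}$ regularity plus orthogonality from Gr\"uter and Stredulinsky--Ziemer. So your account is consistent with, and in fact more detailed than, what the paper itself offers.
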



The concavity of $I_C$ and $I_C^{(n+1)/n)}$ for a convex body, \cite{MR2008339}, \cite[Cor.~6.11]{MR2507637}, \cite[Cor.~4.2]{rv1}, imply

\begin{lemma}[{\cite[Lemma~4.9]{rv1}}]
\label{lem:I_C(v)> cv}
Let $C\subset \rr^{n+1}$ be a convex body and $0<v_0<\vol{C}$. Then
\begin{equation}
\label{eq:I_C(v)> cv.}
I_C(v)\ge \frac{I_C(v_0)}{v_0}\,v\quad\text{and}\quad I_C(v)\ge \frac{I_C(v_0)}{v_0^{n/(n+1)}}\,v^{n/(n+1)},
\end{equation}
for all $0\le v\le v_0$.
\end{lemma}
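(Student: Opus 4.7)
The plan is to derive both inequalities directly from the concavity statements cited just before the lemma, namely that both $I_C$ and $I_C^{(n+1)/n}$ are concave on $[0,|C|]$, together with the observation that both functions vanish at $0$.

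The key elementary fact is: if $f:[0,a]\to[0,\infty)$ is concave with $f(0)=0$, then the slope function $v\mapsto f(v)/v$ is non-increasing on $(0,a]$. I would verify this in one line: for $0<v\le v_0$, write $v = (v/v_0)\,v_0 + (1-v/v_0)\cdot 0$ and apply concavity to obtain $f(v)\ge (v/v_0)f(v_0)$, i.e.\ $f(v)/v \ge f(v_0)/v_0$.

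Applying this to $f=I_C$ immediately yields the first inequality $I_C(v)\ge (I_C(v_0)/v_0)\,v$. Applying it to $f=I_C^{(n+1)/n}$ (which is also concave and vanishes at $0$) yields
\begin{equation*}
I_C(v)^{(n+1)/n}\ge \frac{I_C(v_0)^{(n+1)/n}}{v_0}\,v,
\end{equation*}
and raising both sides to the power $n/(n+1)$ gives the second inequality. The case $v=0$ is trivial since $I_C(0)=0$.

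There is essentially no obstacle here: the content of the lemma is packaged into the concavity results previously stated, and the only step is the standard observation about concave functions with value $0$ at the origin. The only mild care needed is to check that $I_C^{(n+1)/n}$ is defined and concave on the whole interval $[0,v_0]$, which follows from Lemma references cited in the paragraph above the lemma statement.
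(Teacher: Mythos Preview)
Your proof is correct and follows exactly the approach the paper indicates: the paper does not give a detailed proof of this lemma but explicitly states that it is a consequence of the concavity of $I_C$ and of $I_C^{(n+1)/n}$ (citing the result from \cite[Lemma~4.9]{rv1}), which is precisely what you do. Your argument makes explicit the one-line slope inequality for concave functions vanishing at the origin that underlies both inequalities.
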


We also have the following uniform relative isoperimetric inequality and bounds on the volume of relative balls in convex cylinders.

\begin{proposition}
\label{prp:heramientas para existencia}
\mbox{}
Let $C=K\times\rr^q$, where $K$ is an $m$-dimensional  convex body. Given $r_0>0$, there exist positive constants $M$, $\ell_1$, only depending on $r_0$ and $C$, and a universal positive constant $\ell_2$ so that
\begin{equation}
\label{eq:isnqgdbl1}
\pp_{\clb_C(x,r)}(v)\ge M\, {\min \{v,\vol{\clb_C(x,r)}-v\}}^{n/(n+1)},\end{equation}
for all $x\in C$, $0<r\le r_0$, and $0<v<\vol{\clb(x,r)}$, and
\begin{equation}
\label{eq:isnqgdbl1a}
\ell_1 r^{n+1} \le \vol{\clb_C(x,r)} \le \ell_2 r^{n+1},
\end{equation}
for any $x\in C$, $0<r\le r_0$.
\end{proposition}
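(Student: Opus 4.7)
The proposition has two parts: the two-sided volume estimate \eqref{eq:isnqgdbl1a} and the uniform relative isoperimetric inequality \eqref{eq:isnqgdbl1}. I would treat the volume bounds first, since the lower bound is the main technical input for \eqref{eq:isnqgdbl1}. The upper estimate is immediate, as $\clb_C(x,r)\subset\clb(x,r)$ yields \eqref{eq:isnqgdbl1a} with the universal constant $\ell_2=|\clb(0,1)|$. For the lower estimate the product structure lets me factor the problem: the elementary inclusion
\[
\clb_C((x_1,x_2),r)\supset \big(\clb(x_1,r/\sqrt{2})\cap K\big)\times \clb_{\rr^q}(x_2,r/\sqrt{2})
\]
reduces matters to showing $|\clb(y,s)\cap K|\ge c\,s^m$ uniformly for $y\in K$ and $0<s\le r_0/\sqrt{2}$. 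This is a standard inscribed-cone argument: fix $p_0\in\intt K$ and $\rho>0$ with $\clb(p_0,\rho)\subset K$; then for every $y\in K$ the convex hull $\Gamma_y:=\text{conv}(\{y\}\cup \clb(p_0,\rho))$ lies in $K$ and is a truncated cone with apex $y$ of half-aperture bounded below by $\arcsin(\rho/\diam K)$, so $\Gamma_y\cap \clb(y,s)$ has $m$-volume at least a positive constant (depending only on $K$ and $r_0$) times $s^m$.

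For \eqref{eq:isnqgdbl1} I plan to use rescaling. The homothety $\phi_{x,r}(z):=(z-x)/r$ sends $\clb_C(x,r)$ onto the normalized convex body
\[
D_{x,r}:=\clb(0,1)\cap \big(r^{-1}(K-x_1)\times\rr^q\big),
\]
whose diameter is at most $2$ and, by the first part of the proof, whose volume is bounded below by a constant depending only on $K$ and $r_0$. Applying the relative isoperimetric inequality for convex bodies from~\cite{rv1} to $D_{x,r}$ produces a constant $M(x,r)>0$ for which the desired inequality holds inside $D_{x,r}$. Transferring back by Lemma~\ref{lem:bilip} applied to $\phi_{x,r}$ (with $\Lip(\phi_{x,r})=r^{-1}$ and $\Lip(\phi_{x,r}^{-1})=r$), the $r$-scalings of perimeter ($r^n$) and of volume ($r^{n+1}$) balance under the exponent $n/(n+1)$, and one obtains \eqref{eq:isnqgdbl1} on $\clb_C(x,r)$ with the same constant $M(x,r)$.

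The main obstacle is uniformity of $M(x,r)$ over $(x,r)$. My strategy is a Blaschke-type compactness argument: the family $\{D_{x,r}:x\in C,\,0<r\le r_0\}$ is relatively compact in the Hausdorff topology since each $D_{x,r}\subset\clb(0,1)$, and any Hausdorff limit remains a convex body with non-empty interior, because the rescaled inscribed cone $\phi_{x,r}(\Gamma_{x_1}\times\rr^q)\cap \clb(0,1)$ has half-aperture uniformly bounded below in $(x,r)$ and persists in the limit. Combined with lower semicontinuity of perimeter under $L^1$ convergence of finite-perimeter sets, this non-degeneracy implies the infimum of $M(\cdot)$ over the family is strictly positive. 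Unscaling gives the uniform $M=M(C,r_0)>0$ claimed in~\eqref{eq:isnqgdbl1}.
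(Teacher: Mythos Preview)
Your argument is correct. The paper's own proof is a one-sentence reduction: since the isometry group of $C=K\times\rr^q$ contains all translations in the $\rr^q$-factor, the quotient is (identified with) the compact body $K$, and hence the proof of \cite[Thm.~4.12]{rv1}---the same statement for compact convex bodies---carries over without change. In effect the paper translates every center $x=(x_1,x_2)$ to $(x_1,0)\in K\times\{0\}$ and then simply quotes the bounded case.

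Your route does not invoke the isometry group; instead you rescale each ball to unit size and run Blaschke selection directly on the family $\{D_{x,r}\}$, with non-degeneracy of Hausdorff limits supplied by the uniform inscribed cone. This amounts to unpacking what \cite[Thm.~4.12]{rv1} does internally (indeed, the later Proposition~\ref{prp:cylrelativ} reveals that the key input there is a uniform lower bound on $\inr(\clb_C(x,r_0))$, which is exactly your cone estimate), so your proof is more self-contained while the paper's is terser. One point worth tightening: the passage from ``lower semicontinuity of perimeter under $L^1$ convergence'' to ``$\inf M(\cdot)>0$ over the family'' is not immediate, because the competitor sets live in \emph{varying} domains $D_{x,r}$; the clean fix is to cite \cite[Thm.~3.4]{rv1} (Hausdorff convergence of convex bodies implies bilipschitz convergence) together with Lemma~\ref{lem:bilip}, which makes the relative isoperimetric constant continuous along Hausdorff-convergent sequences of convex bodies.
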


\begin{proof}
Since the quotient of $C$ by its isometry group is compact, the proof  is reduced to that of \cite[Thm.~4.12]{rv1}.
\end{proof}

Let $K\subset \rr^{n+1}$ be a closed convex cone with vertex $p$ . Let $\alpha(K)=\hh^n(\ptl{B}(p,1)\cap \intt(K))$ be the \emph{solid angle} of $K$. It is known that intrinsic geodesic balls (Euclidean balls intersected with $K$) centered at the vertex are isoperimetric regions in $K$, \cite{lions-pacella}, \cite{r-r}, and that they are the only ones \cite{FI} for general convex cones, without any regularity assumption on the boundary. The isoperimetric profile of $K$ is given by
\begin{equation}
\label{eq:isopsolang}
I_K(v)={\alpha(K)}^{1/(n+1)}\,(n+1)^{n/(n+1)}v^{n/(n+1)}.
\end{equation}
Consequently the isoperimetric profile of a convex cone is completely determinated by its solid angle.

We define the tangent cone $C_{p}$ of a convex body $C$ at a given boundary point $p\in\ptl C$ as the closure of the set
\[
\bigcup_{\la > 0} h_{p,\la} (C),
\]
where $ h_{p,\la}$ is the dilation of center $p$ and factor $\la$. Since the quotient of the cylinder $C=K\times\rr^q$ by its isometry group is compact, then adapting \cite[Lemma 6.1]{rv1} we get  the existence of points in $\ptl C$ whose tangent cones are minima of the solid angle function. By \eqref{eq:isopsolang}, the isoperimetric profiles of tangent cones which are minima of the solid angle function coincide. The common profile will be denoted by $I_{C_{\min}}$.

\begin{proposition}[{\cite[Proposition 6.2]{rv1}}]
\label{prp:ICleICmin}
Let $C\subset\rr^{n+1}$ be a convex body (possibly unbounded), $p\in C$ and let  $H\subset\rr^{n+1}$ denote the closed half-space, then
\begin{equation}
\label{eq:ICleICmin}
I_C(v)\le  I_{C_p}(v)\le  I_H(v),
\end{equation}
for all $0\le v\le\vol{C}$. Moreover $I_C\le I_{C_{\min}}$.
\end{proposition}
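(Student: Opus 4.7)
The plan is to first establish $I_C(v)\le I_{C_p}(v)$ by a small-volume competitor argument and then bootstrap to arbitrary $v$ using monotonicity of the renormalized profile; the remaining two inequalities are immediate consequences. I shall assume throughout that $p\in\ptl C$, since the comparison with $I_H$ would be false for interior points (where $C_p=\rr^{n+1}$).

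For the bound $I_{C_p}(v)\le I_H(v)$, the supporting hyperplane theorem yields a closed half-space $H_p$ with $p\in\ptl H_p$ and $C\subset H_p$. Since $H_p$ is invariant under dilations centered at $p$, the definition of $C_p$ forces $C_p\subset H_p$, so $\alpha(C_p)\le\alpha(H_p)=\alpha(H)$, and formula \eqref{eq:isopsolang} yields $I_{C_p}\le I_H$.

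For the bound $I_C(v)\le I_{C_p}(v)$, I would use the intrinsic geodesic ball $B_{C_p}(p,r)$, which is isoperimetric in $C_p$ with volume $W(r):=\vol{B_{C_p}(p,r)}$ and perimeter $I_{C_p}(W(r))=c_p\,W(r)^{n/(n+1)}$, where $c_p:=\alpha(C_p)^{1/(n+1)}(n+1)^{n/(n+1)}$. Setting $w(r):=\vol{B_C(p,r)}$, the inclusion $C\subset C_p$ gives $B_C(p,r)\subset B_{C_p}(p,r)$ and $\ptl_C B_C(p,r)=\ptl B(p,r)\cap\intt(C)\subset\ptl_{C_p}B_{C_p}(p,r)$, so the spherical cap of $B_{C_p}(p,r)$ is a valid competitor cut by $C$:
\[
P_C(B_C(p,r))\le c_p\,W(r)^{n/(n+1)},\qquad w(r)\le W(r).
\]
The quantitative input needed is the first-order infinitesimal match between $C$ and $C_p$ at $p$, namely $W(r)-w(r)=o(W(r))$ as $r\to 0^+$, which follows from the Kuratowski-style convergence $h_{p,\la}(C)\to C_p$ as $\la\to\infty$ together with the homogeneity of $C_p$ (rescaling a fixed-$R$ approximation by $1/\la$). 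These two facts combine to give $I_C(w(r))/w(r)^{n/(n+1)}\to c_p$ as $r\to 0^+$.

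To pass to arbitrary $v\in(0,\vol{C}]$, Lemma~\ref{lem:I_C(v)> cv}, equivalent to the concavity of $I_C^{(n+1)/n}$, implies that $v\mapsto I_C(v)/v^{n/(n+1)}$ is non-increasing. Picking $r$ so small that $w(r)\le v$ yields $I_C(v)/v^{n/(n+1)}\le I_C(w(r))/w(r)^{n/(n+1)}$, and letting $r\to 0^+$ gives $I_C(v)\le c_p\,v^{n/(n+1)}=I_{C_p}(v)$. The final statement $I_C\le I_{C_{\min}}$ follows by applying this at a boundary point whose tangent cone has minimal solid angle, whose existence is ensured by the compactness adaptation of \cite[Lemma~6.1]{rv1} mentioned just above the statement. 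The main difficulty I anticipate is the quantitative volume estimate $W(r)-w(r)=o(W(r))$: the topological convergence of dilations has to be upgraded to a uniform volume estimate near $p$, handling smooth and singular boundary points on equal footing; everything else is bookkeeping.
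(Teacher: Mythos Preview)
The paper does not prove this proposition; it is quoted from \cite[Proposition~6.2]{rv1}. Your outline is essentially correct for bounded convex bodies, but two remarks are in order.

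The estimate $W(r)-w(r)=o(W(r))$ that you single out as the main difficulty is in fact straightforward and needs no regularity distinction. Since $p\in C$ and $C$ is convex, the dilations $h_{p,\la}(C)$ form an increasing family whose union contains $\intt(C_p)$; as $|\ptl C_p|=0$, monotone convergence of Lebesgue measure gives
\[
\la^{n+1}w(1/\la)=|h_{p,\la}(C)\cap B(p,1)|\uparrow |C_p\cap B(p,1)|,
\]
which is precisely $w(r)/W(r)\to 1$. (Also, your competitor balls only yield $\limsup_{r\to 0}I_C(w(r))/w(r)^{n/(n+1)}\le c_p$, not the two-sided limit, but that upper bound is all the argument uses.)

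The genuinely delicate point is your bootstrap to arbitrary volumes. You invoke Lemma~\ref{lem:I_C(v)> cv}, which rests on the concavity of $I_C^{(n+1)/n}$; in this paper that lemma is stated only for \emph{compact} convex bodies. For unbounded $C$ the concavity is established later (Propositions~\ref{prp:Concavdeperfcyldro} and~\ref{prp:excylindical}), and those results pass through Proposition~\ref{prp:exicyl}, whose proof explicitly appeals to the present inequality~\eqref{eq:ICleICmin}. So, as written, your argument is circular in the unbounded case. This is fixable---the existence argument really only needs some finite upper bound on $I_C$, obtainable from cruder competitors (intrinsic balls for small volumes, slabs for large ones)---but it is this step, not the density estimate, that actually requires care.
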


\begin{remark}
Proposition~\ref{prp:ICleICmin} asserts that $\overline{E}\cap\ptl C\neq\emptyset$ when $E\subset C$ is an isoperimetric region since, in case $\overline{E}\cap\ptl C$ is empty, then $\overline{E}$ is an Euclidean ball.
\end{remark}

Let $C\subset\rr^{n+1}$ be a closed unbounded convex set. Assume there exists a hyperplane $\Pi$ such that the orthogonal projection $\pi:\rr^{n+1}\to\Pi$ takes $C$ to a bounded set, and let $K$ be the bounded convex body defined as the closure of $\pi(C)$. Then $C$ is contained in the right convex cylinder $\text{cyl}(K)$ of base $K$. Since $C$ is unbounded, it contains a half-line which is necessarily parallel to the axis of $\text{cyl}(K)$. If $C$ contains a complete line, then $C=\text{cyl}(K)$ is a cylinder \cite[Lemma~1.4.2]{sch}. This implies that $C$ is either a cylinder, or is contained in a half-cylinder. We shall say that the unbounded convex body $C$ is \emph{cylindrically bounded} if it is contained in a convex cylinder of bounded base, and it is not a cylinder itself.

\section{Isoperimetric regions in cylinders}
\label{sec:cylinders}
In this Section we consider the isoperimetric problem when the ambient space is a convex cylinder $K\times \rr^q$, where $K\subset\rr^m$ is a convex body. We shall assume that $m+q=n+1$. Existence of isoperimetric regions in $K\times\rr^q$ can be obtained following the strategy of Galli and Ritoré for contact sub-Riemannian manifolds \cite{gr} with compact quotient under their contact isometry group. One of the basic ingredients in this strategy is the relative isoperimetric inequality in Proposition~\ref{prp:heramientas para existencia}. A second one is the property that any unbounded convex body $C$ is a doubling metric space

\begin{lemma}
\label{lem:doubling}
Let $C\subset \rr^{n+1}$ be an unbounded convex body. Then
\begin{equation}
\label{eq:doubling}
|B_C(x,2r)|\le (2^{n+1}+1) |B_C(x,r)|,
\end{equation}
for any $x\in C$ and any $r>0$.
\end{lemma}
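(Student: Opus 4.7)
My plan is to reduce the doubling inequality to a one-line change-of-variables formula by contracting from the center $x$. I would introduce the homothety $h:\rr^{n+1}\to\rr^{n+1}$ with center $x$ and ratio $1/2$, that is, $h(y)=x+\tfrac12(y-x)$. This is an affine diffeomorphism with constant Jacobian $|\jac h|\equiv 2^{-(n+1)}$, and it sends $B(x,2r)$ bijectively onto $B(x,r)$.

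The only substantive step is to verify that $h$ maps $B_C(x,2r)=B(x,2r)\cap\intt(C)$ into $B_C(x,r)=B(x,r)\cap\intt(C)$. For $y\in\intt(C)$, the image $h(y)=\tfrac12 x+\tfrac12 y$ is the midpoint of a segment whose endpoints lie in $C$ and $\intt(C)$ respectively, and a classical fact on convex bodies (see e.g. \cite{sch}) states that whenever $x\in C$ and $y\in\intt(C)$ the half-open segment $\{(1-t)x+ty:t\in(0,1]\}$ is contained in $\intt(C)$. Taking $t=\tfrac12$ gives $h(y)\in\intt(C)$, and combining with $h(B(x,2r))\subset B(x,r)$ yields $h(B_C(x,2r))\subset B_C(x,r)$.

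It then suffices to apply the area formula for the affine map $h$:
\[
|B_C(x,r)|\ge |h(B_C(x,2r))|=2^{-(n+1)}\,|B_C(x,2r)|,
\]
from which $|B_C(x,2r)|\le 2^{n+1}|B_C(x,r)|\le (2^{n+1}+1)|B_C(x,r)|$, actually with a constant slightly better than the one stated. I see no real obstacle here: the argument is essentially forced by convexity and uses neither the unboundedness of $C$ nor any regularity of $\ptl C$, which is precisely what is required to feed this doubling property into the Galli--Ritoré existence scheme invoked for Proposition~\ref{prp:exicyl}.
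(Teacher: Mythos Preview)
Your argument is correct, and in fact cleaner than the one in the paper: you obtain the sharper constant $2^{n+1}$ instead of $2^{n+1}+1$. The only convexity input you use is the classical fact that the half-open segment from a point of $C$ to a point of $\intt(C)$ lies in $\intt(C)$, and this is exactly what makes the contraction $h$ send $B_C(x,2r)$ into $B_C(x,r)$.

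The paper takes a different route. Instead of contracting, it introduces the closed cone $K$ with vertex $x$ subtended by the closure of $\ptl B_C(x,r)$, and splits
\[
|B_C(x,2r)|=|B_C(x,2r)\setminus B_C(x,r)|+|B_C(x,r)|.
\]
Convexity gives $B_C(x,2r)\setminus B_C(x,r)\subset B_K(x,2r)\setminus B_K(x,r)$, and since $K$ is a cone one has $|B_K(x,2r)|=2^{n+1}|B_K(x,r)|\le 2^{n+1}|B_C(x,r)|$, whence the constant $2^{n+1}+1$. This cone comparison is geometrically natural but costs an extra $+1$; your homothety argument is the more direct one (it is really the observation that $\intt(C)$ is star-shaped with respect to any of its closure points), and as you note, neither approach uses the unboundedness of $C$.
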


\begin{proof}
Let $x\in C$, $r>0$ and let $K$ denote the closed cone with vertex $x$ subtended by the closure of $\ptl B_C(x,r)$. Then
\begin{equation*}
\begin{split}
\vol{B_C(x,2r)}& = \vol{B_C(x,2r)\setminus B_C(x,r)}+\vol{B_C(x,r)}
\\
&\le  \vol{B_K(x,2r)\setminus B_K(x,r)}+\vol{B_C(x,r)}
\\
&\le  \vol{B_K(x,2r)}+\vol{B_C(x,r)}
\\
&= 2^{n+1} \vol{B_K(x,r)}+\vol{B_C(x,r)}
\\
&\le (2^{n+1}+1) \vol{B_C(x,r)},
\end{split}
\end{equation*}
as we claimed. Inequality $\vol{B_C(x,2r)\setminus B_C(x,r)}\le \vol{B_K(x,2r)\setminus B_K(x,r)}$ follows since, by convexity and the definition of $K$, $B_C(x,2r)\setminus B_C(x,r)\subset B_K(x,2r)\setminus B_K(x,r)$.
\end{proof}

%

Using Lemma~\ref{lem:doubling} and Proposition~\ref{prp:ICleICmin} we can show

\begin{proposition}
\label{prp:exicyl}
Consider the convex cylinder $C=K\times \rr^q$, where $K\subset \rr^m$ is a convex body. Then isoperimetric regions exist in $K\times\rr^q$ for all volumes and they are bounded.
\end{proposition}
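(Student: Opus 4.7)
The plan is to adapt the concentration-compactness strategy of Galli and Ritor\'e \cite{gr} to the cylinder $C=K\times\rr^q$, exploiting that its isometry group contains the translations along $\rr^q$ with compact quotient $K$. Let $\{E_k\}\subset C$ be a minimizing sequence for volume $v$, so $\vol{E_k}=v$ and $\pp_C(E_k)\to I_C(v)$. Since the perimeters are uniformly bounded, compactness of $BV$ on bounded subsets of $\intt(C)$ yields, via a diagonal extraction, a subsequence $E_k\to E$ in $L^1_{\mathrm{loc}}(\intt(C))$ with $\vol{E}\le v$ and, by lower semicontinuity, $\pp_C(E)\le\liminf_k\pp_C(E_k)=I_C(v)$.

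If $\vol{E}=v$, then $E$ is an isoperimetric region and we are done. Otherwise, $v_\infty:=v-\vol{E}>0$ escapes to infinity in the $\rr^q$ factor. The doubling property (Lemma~\ref{lem:doubling}) combined with the uniform local relative isoperimetric inequality (Proposition~\ref{prp:heramientas para existencia}) prevents the escaping mass from spreading diffusely: a Lions-type concentration lemma produces $r>0$, $\eta>0$, and points $y_k\in\rr^q$ with $|y_k|\to\infty$ such that $\vol{E_k\cap B_C((p_k,y_k),r)}\ge\eta$ for suitable $p_k\in K$. Translating by $-y_k$, an isometry of $C$, and reapplying local $BV$-compactness produces a new nontrivial limit $E^{(1)}$ with $\vol{E^{(1)}}\ge\eta$.

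Iterating yields at most finitely many nontrivial limits $E^{(0)}=E,E^{(1)},\dots,E^{(N)}$ with $\sum_i\vol{E^{(i)}}=v$ and $\sum_i\pp_C(E^{(i)})\le I_C(v)$; finiteness is forced by the uniform lower bound $\eta$ on each captured volume. Conversely, placing disjointly-translated approximations of the $E^{(i)}$'s far apart in the $\rr^q$ direction yields an admissible competitor of volume $v$ and total perimeter $\sum_i\pp_C(E^{(i)})$, so $I_C(v)=\sum_i\pp_C(E^{(i)})$ and the resulting disjoint union is an isoperimetric region for volume $v$ (and each $E^{(i)}$ is itself isoperimetric for its own volume).

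For boundedness, let $E$ be an isoperimetric region, fix $x_0\in C$, and set $m(r)=\vol{E\setminus B_C(x_0,r)}$, so that $m(r)\to 0$ as $r\to\infty$. Comparing $E$ with its truncation $E\cap B_C(x_0,r)$ and restoring the missing volume by a controlled deformation whose perimeter cost is $O(m(r)^{n/(n+1)})$ guaranteed by Proposition~\ref{prp:heramientas para existencia}, the minimality of $E$ yields a differential inequality of the form $m(r)^{n/(n+1)}\le -c\,m'(r)$ for almost every large $r$, whose integration forces $m(r)=0$ beyond some radius. The main obstacle is the concentration step: ensuring that the missing mass clusters near drifting points rather than vanishing uniformly, which is exactly where the doubling property and the uniform local isoperimetric inequality enter in an essential way.
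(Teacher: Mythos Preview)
Your approach is essentially the paper's: both invoke the concentration--compactness scheme of Galli and Ritor\'e \cite{gr}, and the paper's proof simply lists the four needed ingredients---the uniform relative isoperimetric inequality of Proposition~\ref{prp:heramientas para existencia}, the doubling property of Lemma~\ref{lem:doubling}, the profile upper bound \eqref{eq:ICleICmin}, and a deformation $E_t$ of $E$ with \emph{linear} perimeter--volume control $|P_C(E_t)-P_C(E)|\le M\,\big|\vol{E_t}-\vol{E}\big|$ obtained by flowing the regular part of $\partial E$---and then defers to \cite[Theorem~6.1]{gr}.

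One correction in your boundedness step: Proposition~\ref{prp:heramientas para existencia} gives a \emph{lower} bound on perimeter in small balls, not an upper bound on the cost of restoring volume; the cheap restoration of volume $m(r)$ at cost $O(m(r)^{n/(n+1)})$ comes instead from Proposition~\ref{prp:ICleICmin}. More substantively, with an $O(m(r)^{n/(n+1)})$ restoration the differential inequality need not close, because the constant in the upper bound (the cone constant of $C_{\min}$) can coincide with the best available lower bound for $I_C$ on small volumes, leaving no gain. This is exactly why the paper singles out the \emph{linear}-cost deformation as a separate ingredient: it makes $E$ a $(\Lambda,r_0)$-minimizer, after which Proposition~\ref{prp:heramientas para existencia} applied in small balls centered on $\partial_C E$ yields uniform density lower bounds, and boundedness follows from finiteness of the total perimeter.
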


\begin{proof}
To follow the strategy of Galli and Ritoré \cite{gr} (see Morgan \cite{morgmt} for a slightly different proof for smooth Riemannian manifolds), we only need a relative isoperimetric inequality \eqref{eq:isnqgdbl1} for balls $\overline{B}_C(x,r)$ of small radius with a uniform constant; the doubling property \eqref{eq:doubling}; inequality \eqref{eq:ICleICmin} giving an upper bound of the isoperimetric profile; and a deformation of isoperimetric sets $E$ by finite perimeter sets $E_t$ satisfying
\begin{equation*}
|\hh^n(\ptl E_t\cap\intt(C))-\hh^n(\ptl E\cap\intt(C))|\le M\,|\vol{E_t}-\vol{E}|,
\end{equation*}
for small $|t|$ and some constant $M>0$ not depending in $t$, which can be obtained by deforming the regular part of the boundary of $E$ using the flow associated to a vector field with compact support.

Using all these ingredients, the proof of Theorem~6.1 in \cite{gr} applies to prove existence of isoperimetric regions in $K\times\rr^q$.
\end{proof}

Let us prove now the concavity of the isoperimetric profile of the cylinder and of its power $\tfrac{n+1}{n}$. We start by proving its continuity.

\begin{proposition}
\label{prp:I_{Cyl} is contin}
Let $C=K\times \rr^q$, where $K$ is an $m$-dimensional convex body. Then $I_C$ is non-decreasing and continuous.
\end{proposition}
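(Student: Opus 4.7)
The plan is in two parts. First, I would prove that $I_C$ is non-decreasing by an anisotropic contraction of the $\rr^q$ factor. Then I would upgrade monotonicity to continuity by adjoining a small Euclidean ball placed far along the cylinder axis.

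For monotonicity, fix $0 < v_1 \le v_2$ and use Proposition~\ref{prp:exicyl} to obtain a bounded isoperimetric region $E \subset C$ with $\vol{E} = v_2$. Consider the self-map $f_\lambda : C \to C$ defined by $f_\lambda(y,z) = (y,\lambda z)$ for $y \in K$, $z \in \rr^q$, with parameter $\lambda = (v_1/v_2)^{1/q}\in (0,1]$; since the $\rr^q$ factor is invariant under scaling, $f_\lambda$ is indeed valued in $C$. A direct computation shows that $\Lip(f_\lambda) = 1$ (only the $\rr^q$ direction is scaled, and by a factor at most~$1$), while the Jacobian gives $\vol{f_\lambda(E)} = \lambda^q \vol{E} = v_1$. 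Lemma~\ref{lem:bilip} then yields
\[
\pp_C(f_\lambda(E)) \le \Lip(f_\lambda)^n\, \pp_C(E) = \pp_C(E) = I_C(v_2),
\]
so $I_C(v_1) \le I_C(v_2)$.

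For continuity at $v_0 > 0$, again apply Proposition~\ref{prp:exicyl} to pick a bounded isoperimetric region $E$ of volume $v_0$. Fix $p_0 \in \intt(K)$: since $E$ is bounded while $C$ is unbounded along $\rr^q$, for each small $\epsilon>0$ one can place a closed Euclidean ball $B \subset \intt(C)$ of volume $\epsilon$ centered at a point $(p_0,z_0)$ with $|z_0|$ sufficiently large so that $B$ is disjoint from $E$. Taking $\epsilon = v - v_0 > 0$ and using $E \cup B$ as a competitor of volume $v$ yields
\[
I_C(v) \le I_C(v_0) + c\,(v-v_0)^{n/(n+1)}
\]
for a dimensional constant $c$. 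The symmetric argument, starting from an isoperimetric region of volume $v < v_0$ and adjoining a disjoint ball of volume $v_0-v$, gives $I_C(v_0) \le I_C(v) + c\,(v_0-v)^{n/(n+1)}$.

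Combining these two one-sided estimates with the monotonicity already proved produces $|I_C(v) - I_C(v_0)| \le c\,|v-v_0|^{n/(n+1)}$ for $v$ near $v_0$, which implies continuity. There is no serious obstacle; the conceptual point is that the cylinder provides two independent tools---an anisotropic scaling of the $\rr^q$ factor that decreases volume without increasing perimeter (giving monotonicity), and unlimited room at infinity for inserting disjoint balls (giving the quantitative continuity bound).
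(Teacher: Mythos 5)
Your proof is correct. The monotonicity half is the same as the paper's: the anisotropic contraction $(y,z)\mapsto(y,\lambda z)$ of the $\rr^q$ factor, applied to an isoperimetric region of the larger volume (the paper states the sharper bound $P_C(\varphi_t(E))\le t^{q-1}P_C(E)$, but your cruder estimate via $\Lip(f_\lambda)^n=1$ and Lemma~\ref{lem:bilip} suffices equally well). The continuity half genuinely differs in one direction. The paper proves left-continuity exactly as you do, by adjoining a small disjoint ball to an isoperimetric region of volume $v_i<v$; but for right-continuity it uses a variational argument: a vector field supported on the regular part of $\ptl_C E$ together with the Inverse Function Theorem produces a smooth family $\{E_w\}$ with $P(E_w)\ge I_C(w)$ and equality at $w=v$, forcing $I_C(v_i)\to I_C(v)$ as $v_i\downarrow v$. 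You instead observe that the ball-adjoining competitor works symmetrically in both directions once monotonicity is in hand, yielding the quantitative modulus $|I_C(v)-I_C(v_0)|\le c\,|v-v_0|^{n/(n+1)}$ near $v_0$. This is more elementary — it avoids any appeal to the regularity theory of Lemma~\ref{lem:n-7} needed to find a smooth open portion of the free boundary on which to support the deformation — and it gives a Hölder modulus of continuity rather than bare continuity. The only points to keep explicit are that the ball must have radius smaller than the inradius of $K$ (automatic for $v$ close to $v_0$) and that it can be placed at positive distance from the bounded isoperimetric region, so that perimeter is (sub)additive on the union; both are as you indicate.
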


\begin{proof}
Given $t>0$, the smooth map $\varphi_t:C\to C$ defined by $\varphi_t(x,y)=(x,ty)$, $x\in K$, $y\in\rr^q$, satisfies $|\varphi_t(E)|=t^q\,|E|$. When $t\le 1$, we also have $P_C(\varphi_t(E))\le t^{q-1}\,P_C(E)$. This implies that the isoperimetric profile is a non-decreasing function: let $v_1<v_2$, and $E\subset C$ an isoperimetric region of volume $v_2$. Let $0<t<1$ so that $\vol{\varphi_t(E)}=v_1$. We have
\[
I_C(v_1)\le P(\varphi_t(E))\le P(E)=I_C(v_2).
\]
This shows that $I_C$ is non-decreasing.

Let us prove now the right-continuity of $I_C$ at $v$. Consider an isoperimetric region $E$ of volume $v$. We can take a smooth vector field $Z$ with support in the regular part of the boundary of $E$ such that $\int_E\divv Z\neq 0$. The flow $\{\varphi_t\}_{t\in\rr}$ of $Z$ satisfies $(d/dt)|_{t=0} |\varphi_t(E)|\neq 0$. Using the Inverse Function Theorem we obtain a smooth family $\{E_w\}$, for $w$ near $v$, with $|E_w|=w$ and $E_v=E$. The function $f(w)=P(E_w)$ satisfies $f\ge I_C$ and $I_C(v)=f(v)$. This implies that $I_C$ is right-continuous at $v$ since, for $v_i\downarrow v$, we have
\[
I_C(v)=f(v)=\lim_{i\to\infty} f(v_i)\ge \lim_{i\to\infty} I_C(v_i)\ge I_C(v),
\]
by the monotonicity of $I_C$.

To prove the left-continuity of $I_C$ at $v$ we take a sequence of isoperimetric regions $E_i$ with $v_i=|E_i|\uparrow v$ and we consider balls $B_i$ disjoint from $E_i$ so that $|E_i\cup B_i|=|E_i|+|B_i|$. Then $I_C(v)\le P(E_i\cup B_i)=I_C(v_i)+P(B_i)\le I_C(v)+P(B_i)$ by the monotonicity of $I_C$, and the left-continuity follows by taking limits since $\lim_{i\to\infty}P(B_i)=0$.
\end{proof}


\begin{lemma}
\label{lem:Lip CxR^k}
Let $\{K_i\}_{i\in\nn}$ be a sequence of $m$-dimensional convex bodies converging to a convex body $K$ in Hausdorff distance. Then $\{K_i\times\rr^q\}_{i\in\nn}$ converges to $K\times\rr^q$ in lipschitz distance.
\end{lemma}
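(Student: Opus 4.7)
The plan is to construct bilipschitz homeomorphisms $f_i: K \to K_i$ with Lipschitz constants converging to $1$ and extend them trivially in the $\rr^q$ factor. Explicitly, if $f_i$ is bilipschitz with $\Lip(f_i), \Lip(f_i^{-1}) \to 1$, then $F_i(x,y) := (f_i(x), y)$ is bilipschitz from $K \times \rr^q$ to $K_i \times \rr^q$, and a direct calculation with the Euclidean product metric gives $\Lip(F_i) \le \max\{\Lip(f_i), 1\}$ and $\Lip(F_i^{-1}) \le \max\{\Lip(f_i^{-1}), 1\}$, so the Lipschitz convergence of the factors implies that of the products.

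To produce the $f_i$, I fix an interior point $p \in \intt(K)$ and $r > 0$ with $\clb(p, r) \subset K$; by Hausdorff convergence, $\clb(p, r/2) \subset K_i$ for all sufficiently large $i$. Combining this inner ball with the Hausdorff-closeness of $K$ and $K_i$ via the elementary convex manipulation ``any $\varepsilon$-enlargement of a convex body containing a ball of radius $r$ is absorbed by a $(1 + \varepsilon/r)$-dilation from that ball's center,'' I obtain
\[
(1 - \delta_i)(K - p) \subset K_i - p \subset (1 + \delta_i)(K - p),
\]
with $\delta_i \to 0$. Equivalently, the radial functions $\rho_K(u) = \sup\{t \ge 0 : p + tu \in K\}$ and $\rho_{K_i}(u)$ (both well-defined and Lipschitz on $S^{m-1}$ because of the common inner ball) are such that the ratio $c_i := \rho_{K_i}/\rho_K$ converges to $1$ uniformly. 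I then define $f_i: K \to K_i$ by the radial rescaling $f_i(p+tu) = p + t\,c_i(u)\,u$ for $u \in S^{m-1}$, $t \in [0, \rho_K(u)]$; this is a homeomorphism whose inverse is the analogous radial rescaling.

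The main step, which I expect to be the principal obstacle, is the estimate $\Lip(f_i), \Lip(f_i^{-1}) \to 1$. Decomposing the differential of $f_i$ into radial and tangential components shows that the radial part is controlled pointwise by $c_i$ itself, but the tangential part involves the surface gradient of $c_i$ on $S^{m-1}$. Hausdorff convergence provides only the uniform smallness of $c_i - 1$, not of its derivative; bounding the tangential distortion therefore requires the uniform Lipschitz estimate on $\rho_K$ and $\rho_{K_i}$ coming from the common inner ball $\clb(p, r/2)$, combined with the uniform smallness of $c_i - 1$, and assembled into a convex-geometric estimate yielding $\Lip(f_i) \le 1 + \eta_i$ with $\eta_i \to 0$.

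Having obtained the maps $f_i$, the product extension $F_i$ produces the required sequence witnessing $K_i \times \rr^q \to K \times \rr^q$ in Lipschitz distance.
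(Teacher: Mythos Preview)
Your reduction to base maps $f_i$ and the product extension $F_i(x,y)=(f_i(x),y)$, together with the bound $\Lip(F_i)\le\max\{\Lip(f_i),1\}$, is correct and is exactly what the paper does. The paper, however, does not construct the $f_i$: it simply invokes \cite[Theorem~3.4]{rv1} as a black box to obtain bilipschitz maps with $\Lip(f_i),\Lip(f_i^{-1})\to 1$, and then carries out the product extension. The content you are trying to supply is therefore precisely that cited result, and this is where your proposal breaks down.

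The pure radial rescaling $f_i(p+tu)=p+t\,c_i(u)\,u$ does \emph{not} in general satisfy $\Lip(f_i)\to 1$, and the ingredients you list cannot force it. Take $m=2$, $K=[-1,1]^2$, $p=0$, and $K_i=R_{\theta_i}K$ with $\theta_i\downarrow 0$. Then $d_H(K_i,K)\to 0$, a common inner ball exists, the radial functions are uniformly Lipschitz, and $\|c_i-1\|_\infty\to 0$; yet on the angular window $(\pi/4,\pi/4+\theta_i)$ one has $c_i(\alpha)=\sin\alpha/\cos(\alpha-\theta_i)$, whose derivative $c_i'(\alpha)=\cos\theta_i/\cos^2(\alpha-\theta_i)$ is approximately~$2$ there. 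For the planar radial map the differential obeys $\|Df_i(v)\|\ge\big(c_i(u)^2+c_i'(\alpha)^2\big)^{1/2}$, so $\Lip(f_i)\ge\sqrt{5}$ for every $i$ (one checks this directly on the pair $v_1=tu_1$, $v_2=tu_2$ at angles $\pi/4$ and $\pi/4+\theta_i$). You correctly identified the tangential distortion as the obstacle, but the hoped-for ``convex-geometric estimate'' assembled from a uniform Lipschitz bound on $\rho_K,\rho_{K_i}$ together with smallness of $c_i-1$ cannot exist: both hypotheses are met in this example while the conclusion fails. Good bilipschitz maps do exist here---the rotation itself is an isometry---but producing them for general Hausdorff-convergent sequences requires the finer argument of \cite[Theorem~3.4]{rv1}, not the naive radial map.
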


\begin{proof}
By \cite[Theorem 3.4]{rv1}, there exists a sequence of bilipschitz maps $f_i:K_i\to K$ such that $\Lip(f_i),\Lip(f_i^{-1})\to 1$ as $i\to \infty$. For every $i\in\nn$, define $F_i:K_i\times\rr^q \to K\times\rr^q$ by
\begin{equation}
\label{eq:Lip CxR^k eks1}
F_i(x,y)=(f_i(x),y),\qquad(x,y)\in K_i\times\rr^q.
\end{equation}
Take now $(x_1,y_1), (x_2,y_2)\in K_i\times\rr^q$. We have
\begin{equation}
\label{eq:Lip CxR^k eks2}
\begin{split}
|F_i(x_1,y_1)-F_i(x_2,y_2)|^2&=|f_i(x_1)-f_i(x_2)|^2+|y_1-y_2|^2
\\
&\le \max\{\Lip(f_i)^2,1 \}\big(|x_1-x_2|^2+|y_1-y_2|^2\big)
\\
&=\max\{\Lip(f_i)^2,1 \}\big|(x_1,y_1)-(x_2,y_2)\big|^2,
\end{split}
\end{equation}
where $|\cdot|$ is the Euclidean norm in the suitable Euclidean space. Hence we get
\[
\limsup_{i\to \infty}\Lip(F_i)\le 1
\]
since $\lim_{i\to \infty}\Lip(f_i)=1$. In a similar way we find  $\limsup_{i\to\infty}\Lip(F_i^{-1})\le 1$. By Remark \ref{rem:lipcomp}, we get $\Lip(F_i^{-1})\Lip(F_i)\ge 1$ and the proof follows.
\end{proof}

\begin{proposition}
\label{prp:Concavdeperfcyldro}
Let $K\subset \rr^m$ be a convex body and $C=K \times \rr^q$. Then $I_C^{(n+1)/n}$ is a concave function. This implies that $I_C$ is concave and every isoperimetric set in $C$ is connected.
\end{proposition}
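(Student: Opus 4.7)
The plan is to first establish concavity of $I_{C}^{(n+1)/n}$ in the smooth case and then pass to the general case by approximation, using the Lipschitz convergence supplied by Lemma~\ref{lem:Lip CxR^k} and the bilipschitz estimates of Lemma~\ref{lem:bilip}.

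\emph{Smooth case.} Suppose $K$ is of class $C^{2,\alpha}$; then $\ptl C=\ptl K\times\rr^q$ is also $C^{2,\alpha}$, and by Lemma~\ref{lem:n-7} any isoperimetric region $E\subset C$ has regular part of class $C^{2,\alpha}$ meeting $\ptl C$ orthogonally, with singular set of Hausdorff dimension at most $n-7$. In this regularity regime one proves concavity of $I_{C}^{(n+1)/n}$ by adapting the variational scheme of \cite{MR2008339}, \cite[Cor.~6.11]{MR2507637}, \cite[Cor.~4.2]{rv1} developed for smooth bounded convex bodies: at each volume $v$ one compares the profile to a smooth model built from a normal variation of an isoperimetric region. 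The only non-routine point is that $C$ is not compact; this is bypassed because isoperimetric regions in $C$ exist and are bounded by Proposition~\ref{prp:exicyl}, so the second-order comparison only involves a compact portion of $C$, and the translation invariance along $\rr^q$ supplies the missing symmetry.

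\emph{General $K$.} Take a sequence of $C^{\infty}$ convex bodies $K_i$ converging to $K$ in Hausdorff distance (for instance, by Minkowski sum with $\tfrac{1}{i}B$ followed by a smoothing of the support function). By Lemma~\ref{lem:Lip CxR^k}, $C_i=K_i\times\rr^q$ converges to $C$ in Lipschitz distance via bilipschitz maps $F_i:C_i\to C$ with $\Lip(F_i),\Lip(F_i^{-1})\to 1$. Combined with Lemma~\ref{lem:bilip} and the continuity of the profile (Proposition~\ref{prp:I_{Cyl} is contin}), this yields pointwise convergence $I_{C_i}(v)\to I_{C}(v)$ for every $v>0$: starting from an isoperimetric region of volume close to $v$ in one cylinder, its image under the bilipschitz map has volume and perimeter that differ from the original by factors tending to $1$, and a small volume adjustment combined with continuity of both profiles closes the argument. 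Since a pointwise limit of concave functions is concave, concavity of $I_{C_i}^{(n+1)/n}$ (smooth case) passes to concavity of $I_{C}^{(n+1)/n}$.

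Concavity of $I_{C}$ is then immediate, because composing a concave function with the concave non-decreasing map $t\mapsto t^{n/(n+1)}$ on $[0,\infty)$ preserves concavity. For connectedness, concavity together with $I_C(0)=0$ gives subadditivity of $I_{C}^{(n+1)/n}$, and the strict concavity of $t\mapsto t^{(n+1)/n}$ upgrades this to strict subadditivity of $I_C$: for $a,b>0$,
\[
I_{C}(a+b)=\bigl(I_{C}^{(n+1)/n}(a+b)\bigr)^{n/(n+1)}\le \bigl(I_{C}^{(n+1)/n}(a)+I_{C}^{(n+1)/n}(b)\bigr)^{n/(n+1)}<I_{C}(a)+I_{C}(b).
\]
Hence an isoperimetric region $E$ of volume $a+b$ cannot decompose into pieces $E_1,E_2$ of positive volumes $a,b$, since otherwise $P_C(E)=P_C(E_1)+P_C(E_2)\ge I_C(a)+I_C(b)>I_C(a+b)=P_C(E)$. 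The main obstacle is the smooth case: adapting the bounded-body variational proof of concavity to the unbounded cylinder, where one must leverage the translation invariance along $\rr^q$ and the boundedness of isoperimetric regions to compensate for the lack of global compactness.
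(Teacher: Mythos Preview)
Your proposal is correct and follows essentially the same route as the paper: reduce to the smooth case via Kuwert's variational argument (using that isoperimetric regions exist and are bounded in the cylinder), approximate a general $K$ by smooth convex bodies, invoke Lemma~\ref{lem:Lip CxR^k} and Lemma~\ref{lem:bilip} together with continuity of the profiles to get $I_{C_i}(v)\to I_C(v)$ pointwise, and conclude concavity as a pointwise limit; connectedness then follows from the strict subadditivity you derive, which is exactly the content of \cite[Theorem~4.6]{rv1} cited in the paper.
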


\begin{proof}
When the boundary of a convex cylinder $C$ is smooth, its isoperimetric profile $I_C$ and its power $I_C^{(n+1)/n}$ are known to be concave using a suitable deformation of an isoperimetric region and the first and second variations of perimeter and volume, as in Kuwert \cite{MR2008339}.

By approximation \cite{sch}, there exists a sequence $\{K_i\}_{i\in\nn}$ of convex bodies in $\rr^m$ with $C^\infty$ boundary such that $K_i\to K$ in Hausdorff distance. Set $C_i=K_i\times\rr^q$. By Lemma \ref{lem:Lip CxR^k}, $C_i\to C$ in lipschitz distance.  Fix now some $v>0$. By Proposition~\ref{prp:exicyl}, there is a sequence of isoperimetric sets $E_i\subset C_i$ of volume $v$.  Thus arguing as in \cite[Theorem 4.1]{rv1}, using the continuity of the isoperimetric profile $I_C$, we get
\begin{equation*}
I_C(v)\le\liminf_{i\to\infty} I_{C_i}(v).
\end{equation*}
Again by Proposition \ref{prp:exicyl} there exists an isoperimetric set $E\subset C$  of volume $v$. Arguing again as in  \cite[Theorem 4.1]{rv1}, we obtain
\begin{equation*}
I_C(v)\ge\limsup_{i\to\infty} I_{C_i}(v).
\end{equation*}
Combining both inequalities we get
\begin{equation*}
I_C(v)=\lim_{i\to\infty} I_{C_i}(v).
\end{equation*}
So $I_C^{(n+1)/n}$, $I_C$ are concave functions as they are pointwise limits of concave functions.

Connectedness of isoperimetric regions is a consequence of the concavity of $I_C^{(n+1)/n}$ as in \cite[Theorem 4.6]{rv1}.
\end{proof}

Assume now that the cylinder $C=K\times\rr^q$ has $C^{2,\alpha}$ boundary. By Theorem~2.6 in Stredulinsky and Ziemer \cite{st-zi}, a local minimizer of perimeter under a volume constraint has the property that either $\cl{\ptl E\cap\intt(C)}$, the closure of $\ptl E\cap\intt(C)$, is either connected or it consists of a union of parallel (totally geodesic) components meeting $\ptl C$ orthogonally with the part of $C$ lying between any two of such components consisting of a \emph{right} cylinder. By the connectedness of isoperimetric regions proven in Proposition~\ref{prp:Concavdeperfcyldro}, $E$ must be a slab in $K\times\rr$. So we have proven the following

\begin{theorem}
\label{thm:sz}
Let $C=K\times\rr^q$ be a convex cylinder with $C^{2,\alpha}$ boundary, and $E\subset C$ an isoperimetric region. Then either the closure of $\ptl E\cap\intt(C)$ is connected or $E$ is an slab in $K\times\rr$.
\end{theorem}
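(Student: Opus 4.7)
My plan is to invoke the structure theorem of Stredulinsky and Ziemer for local perimeter minimizers under a volume constraint, and then combine it with the connectedness of isoperimetric regions established in Proposition~\ref{prp:Concavdeperfcyldro} and the finiteness of $\vol{E}$.

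First, I would observe that an isoperimetric region is automatically a local minimizer of perimeter under a volume constraint, and that the hypothesis $\ptl C \in C^{2,\alpha}$ puts us in the setting of Theorem~2.6 of \cite{st-zi}. That result yields a dichotomy: either (a) $\cl{\ptl E\cap\intt(C)}$ is connected, or (b) it splits as a disjoint union of two or more parallel totally geodesic components meeting $\ptl C$ orthogonally, with the portion of $C$ lying between any two consecutive components forming a right cylinder.

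In alternative (a) the conclusion is immediate. In alternative (b), I would use that Proposition~\ref{prp:Concavdeperfcyldro} forces $E$ to be connected; consequently $E$ must coincide with the region of $C$ trapped between two consecutive components $S_1$ and $S_2$ of $\cl{\ptl E\cap\intt(C)}$. By (b) this region is a right cylinder, and since $\vol{E}$ is finite, it is a bounded right cylinder of the form $L\times J$ with $L$ a compact convex body and $J$ a bounded interval.

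The final step, which I expect to be the main technical point, is to identify this right cylinder with a slab $K\times J$ and, in particular, to force $q=1$. The components $S_i$ are totally geodesic in $\rr^{n+1}$, so they lie in parallel affine hyperplanes, and the orthogonality condition at points of $S_i\cap\ptl C=S_i\cap(\ptl K\times\rr^q)$ forces the common normal direction of these hyperplanes to lie purely in the $\rr^q$ factor; the region trapped between $S_1$ and $S_2$ is therefore of the form $K\times S$ for some slab $S\subset\rr^q$. Finite volume of $E$ then forces $S$ to be bounded, which can only happen when $q=1$, yielding $E=K\times[t_1,t_2]$, a slab in $K\times\rr$, as required.
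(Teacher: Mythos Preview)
Your proof follows exactly the same route as the paper: invoke Theorem~2.6 of Stredulinsky--Ziemer under the $C^{2,\alpha}$ hypothesis to get the dichotomy, then use the connectedness of isoperimetric regions from Proposition~\ref{prp:Concavdeperfcyldro} to force the disconnected-boundary alternative into the slab case. The paper's argument is in fact terser than yours---it simply asserts that connectedness yields ``$E$ must be a slab in $K\times\rr$'' without spelling out the orthogonality/finite-volume analysis that pins down the normal direction and forces $q=1$; your additional paragraph supplies those details (though note a small ordering issue: you should not call the cross-section $L$ compact until after you have established that $\nu$ lies in the $\rr^q$ factor and $q=1$, since for $q\ge 2$ every hyperplane section of $K\times\rr^q$ is unbounded).
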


Let us consider now the isoperimetric profile for small volumes. The following is inspired by \cite[Theorem 6.6]{rv1}, although we have simplified the proof.

\begin{theorem}
\label{thm:optinsmalvol}
Let $C=K\times\rr^q$, where $K\subset\rr^m$ is a convex body. Then, after translation, isoperimetric regions of small volume are close to points with the narrowest tangent cone. Furthermore,
\begin{equation}
\label{eq:limvto0}
\lim_{v\to 0}\frac{I_C(v)}{I_{C_{\min}}(v)}=1.
\end{equation}
\end{theorem}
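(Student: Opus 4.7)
The plan is a blow-up argument at a boundary contact point. Fix a sequence $v_i\downarrow 0$ and, by Proposition~\ref{prp:exicyl}, choose isoperimetric regions $E_i\subset C$ with $|E_i|=v_i$. By the remark following Proposition~\ref{prp:ICleICmin}, each $\overline{E_i}$ meets $\ptl C=\ptl K\times\rr^q$, and the $\rr^q$-translations (which act by isometries on $C$) allow me to arrange the chosen contact points $p_i=(\bar p_i,0)$ to lie in the compact set $\ptl K\times\{0\}$. Passing to a subsequence, $\bar p_i\to \bar p_\infty\in\ptl K$; set $p_\infty=(\bar p_\infty,0)$ and write $C^*:=K_{\bar p_\infty}\times\rr^q$ for the tangent cone of $C$ at $p_\infty$, translated so its vertex is at the origin.

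Setting $\la_i:=v_i^{-1/(n+1)}$ and $\hat E_i:=\la_i(E_i-p_i)\subset \hat C_i:=\la_i(K-\bar p_i)\times\rr^q$, one has $|\hat E_i|=1$ and, combining \eqref{eq:ICleICmin} with \eqref{eq:isopsolang},
\[
P_{\hat C_i}(\hat E_i)=\la_i^n\,I_C(v_i)\le \la_i^n\,I_{C_{\min}}(v_i)=I_{C_{\min}}(1).
\]
Since $\la_i(K-\bar p_i)\to K_{\bar p_\infty}$ in local Hausdorff distance, the cylinders $\hat C_i$ converge to $C^*$ locally. The step I expect to require most effort is extracting a non-trivial $L^1_{\mathrm{loc}}$-limit $\hat E\subset C^*$ of $\hat E_i$ with $|\hat E|=1$. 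Standard BV compactness gives, along a subsequence, $\hat E_i\to\hat E$ in $L^1_{\mathrm{loc}}$ with $|\hat E|\le 1$ and $P_{C^*}(\hat E)\le\liminf P_{\hat C_i}(\hat E_i)\le I_{C_{\min}}(1)$; to rule out mass loss at infinity I plan to use a concentration-compactness alternative. Any bubble escaping in the $\rr^q$-direction has limiting ambient again isometric to $C^*$, while any bubble escaping in the cone-radial direction has limiting ambient a half-space $H$; by \eqref{eq:isopsolang} both ambients have isoperimetric profile bounded below by $I_{C_{\min}}$, and strict subadditivity of $v\mapsto v^{n/(n+1)}$ then forces $\sum_k I_{C_{\min}}(v_k)>I_{C_{\min}}(\textstyle\sum_k v_k)=I_{C_{\min}}(1)$ whenever more than one bubble carries mass, contradicting the uniform upper bound above.

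Granted $|\hat E|=1$, the chain
\[
I_{C_{\min}}(1)\le I_{C^*}(1)\le P_{C^*}(\hat E)\le I_{C_{\min}}(1)
\]
collapses to equalities. The first forces $\alpha(C^*)=\alpha_{\min}$, so $p_\infty$ realises the narrowest tangent cone of $C$; the second, combined with the characterisation of isoperimetric sets in convex cones \cite{FI}, identifies $\hat E$ with the intrinsic geodesic ball in $C^*$ of volume $1$ centred at the vertex. Rescaling back, each (translated) $E_i$ lies within a Euclidean ball of radius $O(v_i^{1/(n+1)})$ around $p_i$, which proves the geometric assertion. Finally, every subsequence selected as above yields $P_{\hat C_i}(\hat E_i)\to I_{C_{\min}}(1)$, and since $I_C(v_i)/I_{C_{\min}}(v_i)=P_{\hat C_i}(\hat E_i)/I_{C_{\min}}(1)$ along it, the relation \eqref{eq:limvto0} follows by the usual subsequence principle.
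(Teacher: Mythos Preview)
Your overall strategy---rescale to unit volume, pass to a limit in the tangent cone $C^*$, and compare profiles---is the same as the paper's. The technical route differs: the paper first proves $\diam(E_i)\to 0$ via an elimination lemma and a perimeter lower density bound (uniform in the rescaling), and then, because the rescaled regions shrink to a point, it can transport them into the tangent cone by explicit bilipschitz maps with Lipschitz constants tending to~$1$. This makes the profile comparison a direct calculation rather than a concentration--compactness argument in a sequence of varying domains.

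There is a genuine gap in your proposal. From $L^1_{\mathrm{loc}}$-convergence $\hat E_i\to\hat E$ with $|\hat E|=1$ you \emph{cannot} conclude that ``each (translated) $E_i$ lies within a Euclidean ball of radius $O(v_i^{1/(n+1)})$ around $p_i$'': $L^1$ convergence does not bound $\diam(\hat E_i)$, and connected almost-minimizers can in principle carry thin tendrils of vanishing mass to infinity without affecting the $L^1$ limit. The diameter bound is precisely the content of the theorem's geometric assertion (``close to points with the narrowest tangent cone''), and it is what the paper obtains from the elimination/density argument in \cite[Theorem~5.5, Corollary~5.8]{rv1}. Your concentration--compactness sketch, if carried out, would yield \eqref{eq:limvto0} and $\alpha(C^*)=\alpha_{\min}$, but not the Hausdorff-closeness statement; for that you still need the density lower bound (which, incidentally, also trivializes the ``no mass loss'' step and makes the bilipschitz route shorter than concentration--compactness in varying ambients).
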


\begin{proof}
To prove \eqref{eq:limvto0},  consider a sequence $\{E_i\}_{i\in\nn}\subset C$ of isoperimetric regions of volumes $v_i\to 0$. By Proposition~\ref{prp:Concavdeperfcyldro}, the sets $ E_i$ are connected.
The key of the proof is to show
\begin{equation}
\label{eq:claim diamto00}
\diam(E_i)\to 0.
\end{equation}
To accomplish this we consider $\la_i\to \infty$ so that the isoperimetric regions $\la_iE\subset \la_i C$ have volume 1. Then we argue exactly as in \cite[Theorem 6.6]{rv1}. We first produce an elimination Lemma as in \cite[Theorem 5.5]{rv1}, with $\eps>0$ independent of $\la_i$, that yields a perimeter lower density bound \cite[Corollary 5.8]{rv1} independent of $\la_i$. Hence the sequence $\{\diam(\la_i E_i)\}_{i\in\nn}$ must be bounded, since otherwise applying the perimeter lower density bound we would get $\pp_{\la_i C}(\la_i E_i)\to \infty$, contradicting Proposition~\ref{prp:ICleICmin}. Since $\{\diam(\la_i E_i)\}_{i\in\nn}$  is bounded, \eqref{eq:claim diamto00} follows.

Translating each set of the sequence $\{E_i\}_{i\in\nn}$, and eventually $C$, we may assume that $E_i$ converges to $0\in\ptl K\times\rr^k$ in Hausdorff distance. Taking $r_i=(\diam(E_i))^{1/2}$ we have $\diam(r_i^{-1}E_i)\to 0$ and so
\begin{equation}
\label{eq:konos}
r_i^{-1}E_i\to 0\quad \text{in Hausdorff distance}.
\end{equation}

Let $q\in\intt(K\cap \cld(0,1))$ and let $D_q$ be an $m$-dimensional closed ball centered at $q$ and contained in $\intt(K\cap\cld(0,1))$. As the sequence $r_i^{-1}K\cap\cld(0,1)$ converges to $K_0\cap\cld(0,1)$ in Hausdorff distance, we construct, using \cite[Thm.~3.4]{rv1}, a family of bilipschitz maps $f_i:r_i^{-1}K\cap \cld(0,1)\to K_0\cap\clb(0,1)$ with $\text{Lip}(f_i)$, $\text{Lip}(f_i^{-1})\to 1$, where $f_i$ is the identity on $D_q$ and is extended linearly along the segments leaving from $q$. We define, as in Lemma~\ref{lem:Lip CxR^k}, the maps $F_i:(r_i^{-1}K\cap\cld(0,1))\times\rr^k\to (K_0\cap\cld(0,1))\times\rr^k$ by $F_i(x,y)=(f_i(x),y)$. These maps satisfy $\Lip(F_i),\,\Lip(F_i^{-1})\to 1$. Since \eqref{eq:konos} holds, the maps $F_i$ have the additional property
\begin{equation}
\label{eq:percone}
\pp_{C_0}(F_i(r_i^{-1}E_i))=\pp_{C_0\cap\clb(0,1)}(F_i(r_i^{-1}E_i)),\qquad\text{for large}\ i\in\nn.
\end{equation}

Thus by  Lemma~\ref{lem:bilip} and \eqref{eq:isopsolang}  we get
\begin{equation}
\label{eq:mincone1}
\begin{split}
\frac{\pp_{C}(E_i)}{\vol{E_i}^{n/(n+1)}}&=\frac{\pp_{r_i^{-1}C}(r_i^{-1}E_i)}{\vol{r_i^{-1}E_i}^{n/(n+1)}}
\\
&\ge \frac{\pp_{C_0}(F_i(r_i^{-1}E_i))}{\vol{F_i(r_i^{-1}E_i)}^{n/(n+1)}}\,(\Lip(F_i)\Lip(F_i^{-1}))^{-n}
\\
&\ge{\alpha(C_0)}^{1/(n+1)}\,(n+1)^{n/(n+1)}\,(\Lip(F_i)\Lip(F_i^{-1}))^{-n}
\end{split}
\end{equation}
Since $E_i$ are isoperimetric regions of volumes $v_i$, passing to the limit we get
\[
\liminf_{i\to\infty}\frac{I_C(v_i)}{v_i^{n/(n+1)}}\ge {\alpha(C_0)}^{1/(n+1)}\,(n+1)^{n/(n+1)}.
\]
From \eqref{eq:isopsolang} we obtain,
\[
\liminf_{i\to\infty}\frac{I_C(v_i)}{ I_{ C_0}(v_i)}\ge 1.
\]
Combining this with \eqref{eq:ICleICmin} and the minimal property of $I_{C_{\min}}$ we deduce
\begin{equation*}
\limsup_{i\to\infty}\frac{I_C(v_i)}{ I_{ C_0}(v_i)}\le
\limsup_{i\to\infty}\frac{I_C(v_i)}{I_{C_{\min}}(v_i)}\le 
1\le \liminf_{i\to\infty}\frac{I_C(v_i)}{ I_{ C_0}(v_i)} .
\end{equation*}
Thus
\begin{equation}
\label{eq:optinsmalvol3}
\lim_{i\to\infty}\frac{I_C(v_i)}{ I_{C_{\min}}(v_i)}=1.
\end{equation}
By \eqref{eq:isopsolang}, we conclude that $C_0$ has minimum solid angle. 
\end{proof}

A convex prism $\Pi$ is a set of the form $P\times\rr^q$  where $P\subset\rr^m$ is a polytope. For convex prisms we are able to characterize the isoperimetric regions for small volumes.

\begin{theorem}
\label{thm:polytops}
Let $\Pi \subset \rr^{n+1}$ be a convex prism.  For small volumes the isoperimetric regions in $\Pi$ are geodesic balls centered at vertices with the smallest solid angle.
\end{theorem}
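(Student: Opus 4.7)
The plan is to reduce the problem to the isoperimetric problem inside a convex cone of the form $T_v P\times\rr^q$, where $v$ is a vertex of $P$ with smallest solid angle, and then invoke the uniqueness result of Figalli and Indrei \cite{FI} for isoperimetric regions in convex cones.

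Let $\{E_i\}_{i\in\nn}\subset\Pi$ be a sequence of isoperimetric regions of volumes $v_i\to 0$. By Theorem~\ref{thm:optinsmalvol} and the diameter estimate appearing in its proof, after translating each $E_i$ along the $\rr^q$ factor one may assume that $E_i$ converges in Hausdorff distance to a single boundary point $p_0=(w_0,y_0)\in\ptl\Pi$ whose tangent cone has minimum solid angle over $\ptl\Pi$, and $\diam(E_i)\to 0$. Since tangent cones of $\Pi=P\times\rr^q$ factor as $T_{(w,y)}\Pi=T_w P\times\rr^q$, and since $\alpha(C\times\rr^q)$ is monotone with respect to inclusion of $C\subset\rr^m$, the cone $T_{w_0}P$ has minimum solid angle among all tangent cones of $P$. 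A standard polytope fact---addressed below---shows this minimum is attained at a vertex, so we may take $w_0=v$ to be a vertex of $P$ of smallest solid angle.

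By the polyhedral structure of $P$ there exists $\delta>0$ with $P\cap B(v,\delta)=T_v P\cap B(v,\delta)$, whence
\[
\Pi\cap\bigl(B(v,\delta)\times\rr^q\bigr)=(T_v P\times\rr^q)\cap\bigl(B(v,\delta)\times\rr^q\bigr).
\]
Since $\diam(E_i)\to 0$ and $E_i\to p_0$ in Hausdorff distance, for large $i$ the closure $\cl{E_i}$ lies strictly inside this common neighborhood, so $\pp_\Pi(E_i)=\pp_{T_v P\times\rr^q}(E_i)$. Combining with Proposition~\ref{prp:ICleICmin} one obtains
\[
\pp_{T_v P\times\rr^q}(E_i)=\pp_\Pi(E_i)=I_\Pi(v_i)\le I_{T_v P\times\rr^q}(v_i)\le \pp_{T_v P\times\rr^q}(E_i),
\]
so equality holds throughout, and $E_i$ is an isoperimetric region in the convex cone $T_v P\times\rr^q$. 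By the uniqueness theorem of Figalli--Indrei \cite{FI}, $E_i$ must be an intrinsic geodesic ball centered at a vertex of this cone, i.e., at a point of $\{v\}\times\rr^q$; since the cone locally coincides with $\Pi$, $E_i$ is a geodesic ball in $\Pi$ centered at a vertex of smallest solid angle.

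The main obstacle is the polytope fact used in the second step, namely that the minimum solid angle among tangent cones of a polytope $P\subset\rr^m$ is attained at a vertex. My plan is a combinatorial argument: if $p$ lies in the relative interior of a face $F$ with vertex $v$, then the facets of $P$ containing $F$ form a subset of those containing $v$, so writing each tangent cone as the intersection of the corresponding supporting half-spaces through the origin gives $T_v P-v\subset T_p P-p$; hence $\alpha(T_v P)\le\alpha(T_p P)$, and since $P$ has only finitely many faces a minimum is attained, necessarily at some vertex.
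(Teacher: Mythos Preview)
Your argument is correct and follows essentially the same route as the paper's proof: localize isoperimetric regions of small volume near a vertex via Theorem~\ref{thm:optinsmalvol}, identify a neighborhood where $\Pi$ coincides with its tangent cone, deduce that the region is isoperimetric in that cone, and apply Figalli--Indrei. The only difference is that you spell out two points the paper leaves implicit---the factorization $T_{(w,y)}\Pi=T_wP\times\rr^q$ together with the polytope fact that the minimum solid angle occurs at a vertex, and the explicit inequality chain $\pp_\Pi(E_i)=I_\Pi(v_i)\le I_{T_vP\times\rr^q}(v_i)$---which is fine and makes the argument cleaner.
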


\begin{proof}
Let $\{E_i\}_{i\in\nn}$ be a sequence of isoperimetric regions in $\Pi$ with $\vol{E_i}\to 0$. By Theorem~\ref{thm:optinsmalvol}, after translation, a subsequence of $E_i$ is close to some vertex $x$ in $\Pi$. Since $\diam(E_i)\to 0$ we can assume that the sets $E_i$ are also subsets of the tangent cone $\Pi_x$ and they are isoperimetric regions in $\Pi_x$. By \cite{FI} the only isoperimetric regions in this cone are, after translation, the geodesic balls centered at $x$. These geodesic balls are also subsets of $\Pi$.
\end{proof} 

To end this section, let us characterize the isoperimetric regions for large volume in the right cylinder $K\times\rr$. We closely follow the proof by Duzaar and Steffen \cite{du-st}, which is slightly simplified by the use of Steiner symmetrization. The case of the cylinder $K\times\rr^q$, with $q>1$, is more involved and will be treated in a different paper.

We shall say that a set $E\subset K\times\rr$ is \emph{normalized} if, for every $x\in K$, the intersection $E\cap (\{x\}\times\rr)$ is a segment with midpoint $(x,0)$.





\begin{theorem}
\label{thm:isocyl}
Let $C=K\times\rr$, where $K\subset\rr^n$ is a convex body. Then there is a constant $v_0>0$ so that the slabs $K\times I$, where $I\subset\rr$ is a compact interval, are the only isoperimetric regions of volume larger than or equal to $v_0$. In particular,  $I_C(v)=2\hh^n(K)$ for all $v\ge v_0$.
\end{theorem}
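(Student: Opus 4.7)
The plan is to combine the slab competitor with Steiner symmetrization and a projection inequality. The slab $K\times I$ with $\hh^1(I)=v/\hh^n(K)$ has volume $v$ and relative perimeter $2\hh^n(K)$, since its lateral face $\ptl K\times I$ sits on $\ptl C$ and contributes nothing to $P_C$. Hence $I_C(v)\le 2\hh^n(K)$ for every $v$, and it suffices to prove the matching lower bound together with the rigidity that only slabs realize it once $v$ is large. Given an isoperimetric region $E$ of volume $v$, bounded by Proposition~\ref{prp:exicyl}, I would first Steiner-symmetrize $E$ in the $\rr$-direction about $\rr^n\times\{0\}$: the reflection across this hyperplane preserves $C$, so the symmetrization preserves volume and does not increase $P_C$. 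Thus I may assume $E=\{(x,t):|t|<u(x)\}$ with $u\in BV(K;[0,\infty))$, still isoperimetric of volume $v=2\int_K u$.

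Put $K^*:=\{u>0\}$. The orthogonal projection $\pi:\rr^{n+1}\to\rr^n\times\{0\}$ restricted to the upper boundary $\Sigma^+=\ptl E\cap\intt(C)\cap\{t\ge 0\}$ (and similarly to the lower one $\Sigma^-$) is $1$-Lipschitz with image $K^*$ up to $\hh^n$-null sets, so the area formula gives
\[
P_C(E)\ge 2\hh^n(K^*).
\]
When $\hh^n(K^*)=\hh^n(K)$, combining this with the slab upper bound $P_C(E)\le 2\hh^n(K)$ forces equality, which in turn forces $\pi|_{\Sigma^\pm}$ to be an $\hh^n$-isometry; hence each $\Sigma^\pm$ is a horizontal hyperplane, $u$ is constant, and $E$ is a slab. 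The remaining task is to rule out $\delta:=\hh^n(K)-\hh^n(K^*)>0$ for large $v$. For that I apply the coarea formula to the height function on $\ptl E$ to get $P_C(E)\ge 2\int_0^{\infty}P_K(K^s)\,ds$ with $K^s=\{u>s\}$, together with the Cavalieri identity $v/2=\int_0^{\infty}\hh^n(K^s)\,ds$. Since $K^s\subset K^*$ gives $\hh^n(K)-\hh^n(K^s)\ge\delta$, the relative isoperimetric inequality in $K$ combined with the linear lower bound provided by Lemma~\ref{lem:I_C(v)> cv} applied to $K$ yields $P_K(K^s)\ge c_K\min(\hh^n(K^s),\delta)$. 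Integrating these two pieces of information produces a bound of the form $P_C(E)\ge C(K)\,\delta\,v$, which for $v$ large contradicts $P_C(E)\le 2\hh^n(K)$ and forces $\delta=0$.

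The main obstacle is that the last estimate depends on $\delta$, so the volume threshold it yields is not uniform in $E$. I would resolve this by a compactness-and-contradiction argument: given a hypothetical sequence of non-slab isoperimetric regions $E_i$ with $v_i\to\infty$ and defects $\delta_i\to 0$, the doubling property of Lemma~\ref{lem:doubling} together with the uniform local relative isoperimetric inequality of Proposition~\ref{prp:heramientas para existencia} allows one to extract, after vertical recentring of the normalized $E_i$, a local $BV$-limit whose projection onto $\rr^n\times\{0\}$ covers all of $K$. The rigidity of the $\hh^n(K^*)=\hh^n(K)$ case then forces this limit to be a slab, contradicting the non-slab hypothesis on $E_i$ for large $i$ and providing the required uniform $v_0$. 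Once this is in hand, both halves of the theorem---the uniqueness of slab minimizers for $v\ge v_0$ and the identity $I_C(v)=2\hh^n(K)$---follow immediately from the rigidity in the projection inequality.
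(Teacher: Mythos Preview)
Your argument tracks the paper's up through Steiner symmetrization, the projection bound $P_C(E)\ge 2\hh^n(K^*)$, and the rigidity when $\hh^n(K^*)=\hh^n(K)$; the divergence is in how you rule out $\delta=\hh^n(K)-\hh^n(K^*)>0$. Your estimate $P_C(E)\ge C(K)\,\delta\,v$ is correct but only yields $\delta\le \text{const}/v\to 0$, and the compactness patch does not close the gap. After vertically recentring a normalized $E_i$ with $v_i\to\infty$, any reasonable local $BV$-limit has infinite volume (the full cylinder, or a half-cylinder if you recentre near the top), so your rigidity argument---which required a finite-volume isoperimetric region with $P_C\le 2\hh^n(K)$---does not apply to it. More fundamentally, even granting that the limit is a slab, this does not contradict the $E_i$ being non-slabs: having $\delta_i\to 0$ is precisely what makes non-slab $E_i$ converge to a slab, and nothing in your outline rules out, say, $\delta_i=1/i>0$ for every $i$.

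The paper (following Duzaar--Steffen) avoids compactness by sharpening the coarea estimate on the ``thick'' region $0<t<\tau$ where $\hh^n(E_t)>\hh^n(K)/2$. Instead of the constant bound $\hh^n(K)-\hh^n(K^t)\ge\delta$ that you use, set $y(t)=\hh^n(S_t\setminus E_t)$ and observe first, using only $P_C(E)\le 2\hh^n(K)$, that $y(t)\ge \hh^n\big(\ptl_C E\cap (K\times(0,t))\big)$. Coarea plus the relative isoperimetric inequality in $K$ then give the integral inequality
\[
y(t)\ge c_2\int_0^t y(s)^{(n-1)/n}\,ds.
\]
Since $y(t)>0$ for $t>0$ (this is the only place $\delta>0$ enters, and only qualitatively), this Gronwall-type inequality forces $y(\tau)\ge (c_2\tau/n)^n$, hence a bound on $\tau$ depending only on $K$, not on $\delta$ or $E$. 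The volume in $K\times[-\tau,\tau]$ is then at most a constant times $P_C(E)$, and the volume outside is handled by the same slice estimate you already wrote down for $s\ge\tau$. Together these give the uniform linear bound $P_C(E)\ge c\,\vol{E}$ with $c=c(K)$, yielding an explicit $v_0$. Your coarea framework is exactly the right one; the missing idea is to replace the constant lower bound $y(t)\ge\delta$ by this growth estimate for $y$.
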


\begin{proof}
The proof is modeled on \cite[Prop~2.11]{du-st}. By comparison with slabs we have $I_C(v)\le 2\,H^n(K)$ for all $v>v_0$.

Let us assume first that $E\subset K\times\rr$ is a normalized set of finite volume and $H^n(\ptl_C E)\le 2\,H^n(K)$, and let $E^*$ be its orthogonal projection over $K_0=K\times\{0\}$. We claim that, it $H^n(K_0\setminus E^*)>0$, then there is a constant $c>0$ so that
\begin{equation}
\label{eq:importante}
H^n(\ptl_C E)\ge c \vol{E}.
\end{equation}
For $t\in\rr$, we define $E_t=E\cap (K\times\{t\})$. As $E$ is normalized, we can choose $\tau>0$ so that $H^n(E_t)\le H^n(K)/2$ for $t\ge \tau$ and $H^n(E_t)> H^n(K)/2$ for $0<t<\tau$.

For $t\ge\tau$ we apply the coarea formula and Lemma~\ref{lem:I_C(v)> cv} to get
\begin{equation}
\label{eq:mitad}
\begin{split}
H^n(\ptl_C E)&\ge H^n(\ptl_C E\cap (K\times [t,\infty))
\\
&\ge \int_{\tau}^{+\infty}\h^{n-1} (\ptl_C{E_s})\,ds\ge c_1\int_\tau^{+\infty} H^n(E_s)\,ds\ge c_1\vol{E\cap (K\times [\tau,+\infty))},
\end{split}
\end{equation}
where $c_1$ is a constant only depending on $H^n(K)/2$.

Let $S_t=K\times\{t\}$. For $0<t<\tau$ we have
\begin{equation}
\label{eq:st-et}
\h^{n} (S_t \setminus E_t)\ge \h^{n} (\ptl_C E \cap (K\times (0,t))),
\end{equation}
since otherwise
\begin{equation*}
\label{eq:2a}
\begin{split}
H^n(K)&= \h^{n} (S_t\setminus E_t)+ \h^{n} (E_t)
\\
&< \h^{n} (\ptl_C E \cap  (K\times(0,t)))+ \h^{n} (\ptl_C E \cap  (K\times [t,+\infty)))
\\
&\le\h^{n} (\ptl_C E)/2,
\end{split}
\end{equation*}
and we should get a contradiction to our assumption $\hh^n(\ptl_C E)\le 2\,H^n(K)$, what proves \eqref{eq:st-et}. So we obtain from \eqref{eq:st-et} and Lemma \ref{lem:I_C(v)> cv}
\begin{equation}
\label{eq:st-etestimate}
\begin{split}
\h^{n} (S_t\setminus E_t)&\ge \h^{n} (\ptl_C E  \cap (K\times (0,t)))
\\
&\ge \int_0^{t}\h^{n-1} (\ptl_C{E\cap S_t})dt
\\
&\ge c_2 \int_0^{\tau}\h^{n} (S_t \setminus E_t)^{(n-1)/n}dt,
\end{split}
\end{equation}
where $c_2$ is a constant only depending on $H^n(K)/2$. Letting $y(t)=\h^{n} (S_t \setminus E_t)$, inequality \eqref{eq:st-etestimate} can be rewritten as the integral inequality
\begin{equation*}
y(t)\ge c_2\int_0^{t}y(s)^{(n-1)/n}ds.
\end{equation*}
Since $H^n(K_0\setminus E^*)>0$ by assumption and $E$ is normalized, we have $y(t)>0$ for all $t>0$, and so
\[
2\,H^n(K)\ge \hh^n(S_\tau\setminus E_\tau)=y(\tau)\ge \frac{c_2^n}{n^n}\,\tau^n,
\]
what implies
\begin{equation}
\tau\le\frac{n}{c_2\,(2\,H^n(K))^{1/n}}.
\end{equation}
We finally estimate
\begin{equation}
\label{eq:mitad2}
\vol{E \cap (K\times {[0,\tau])}}=\int_0^{\tau}\h^{n} (E_t)\,dt
\le 2 \h^{n} ({E_0})\,\tau\le \frac{n}{c_2\,(2\,H^n(K))^{1/n}} \h^{n} (\ptl_C E ).
\end{equation}

Combining \eqref{eq:mitad} and \eqref{eq:mitad2}, we get \eqref{eq:importante}. This proves the claim.

Let now $E\subset K\times\rr$ be an isoperimetric region of large enough volume $v$. Following Talenti \cite{talenti} or Maggi \cite{MR2976521}, we may consider its Steiner symmetrized $\text{sym}\,E$. The set $\text{sym}\,E$ is normalized and we have $\vol{E}=\vol{\text{sym}\,E}$ and $P_C(\text{sym}\,E)\le P_C(E)$. Of course, since $E$ is an isoperimetric region we have $P_C(\text{sym}\,E)=P_C(E)$. If $H^n(K_0\setminus E^*)>0$, then \eqref{eq:importante} implies
\[
P_C(E)=P_C(\text{sym}\,E)=H^n(\ptl _C(\text{sym}\,E))\ge c\,\vol{\text{sym}\,E}=c\,\vol{E},
\]
providing a contradiction since $I_C\le 2\,H^n(K)$.

We conclude that $H^n(K_0\setminus E^*)=0$ and that $E$ is the intersection of the subgraph of a function $u:K\to\rr$ and the epigraph of a function $v:K\to\rr$. The perimeter of $E$ is then given by
\[
\pp_C(E)=\int_K\sqrt{1+|\nabla u|^2}\,d\h^n+\int_K\sqrt{1+|\nabla v|^2}\,d\h^n  \ge 2\h^n(K),
\]
with equality if and only if $\nabla u=\nabla v=0$. Hence $u$, $v$ are constant functions and $E$ is a slab.
\end{proof}

As a consequence we have

\begin{corollary}
\label{cor:half-cylinder}
Let $K\subset\rr^n$ be a convex body and  $C= K\times[0,\infty)$. Then there is a constant $v_0>0$ such that any isoperimetric region in $M$ with volume $v\ge v_0$ is the slab $K\times [0,b]$, where $b=v/H^n(K)$. In particular, $I_C(v)=H^n(K)$ for $v\ge v_0$.
\end{corollary}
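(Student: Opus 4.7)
The plan is to exploit the reflection symmetry of the half-cylinder: the full cylinder $K\times\rr$ is obtained by gluing two copies of $C=K\times[0,\infty)$ along the flat boundary face $K\times\{0\}$, so isoperimetric sets in $C$ should correspond to $\sigma$-symmetric isoperimetric sets in $K\times\rr$, and Theorem~\ref{thm:isocyl} already classifies the latter for large volume. Concretely, let $\sigma\colon K\times\rr\to K\times\rr$ denote the isometric reflection $\sigma(x,t)=(x,-t)$, which interchanges $C$ with $K\times(-\infty,0]$ and fixes $K\times\{0\}$. For any $E\subset C$ of finite volume and finite relative perimeter, define its symmetric double
\[
\widetilde E \;=\; E\,\cup\,\sigma(E)\;\subset\;K\times\rr.
\]
I will show $|\widetilde E|=2|E|$ and, most importantly,
\begin{equation}\label{eq:propplan}
P_{K\times\rr}(\widetilde E)\;\le\;2\,P_C(E).
\end{equation}

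For the volume, $K\times\{0\}$ has $(n+1)$-measure zero, so $E$ and $\sigma(E)$ are essentially disjoint. For \eqref{eq:propplan}, the part of the reduced boundary of $\widetilde E$ lying in $\intt(K)\times(0,\infty)$ agrees with $\partial^\ast E$ restricted to that open set, and by symmetry its reflection accounts for the part in $\intt(K)\times(-\infty,0)$; each contributes exactly $P_C(E)$. On the remaining flat $\intt(K)\times\{0\}$, the points where $E$ has density one from above are matched by $\sigma(E)$ below, so the density of $\widetilde E$ is one and they are interior to $\widetilde E$; points with density zero from above are similarly exterior. This is the subtle step of the argument and is where I expect to need most care: the portion of $\partial E$ sitting on $K\times\{0\}$ is invisible to $P_C$ but must also be shown to be invisible to $P_{K\times\rr}(\widetilde E)$. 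Once \eqref{eq:propplan} is established, the slab $K\times[0,v/H^n(K)]$ is an admissible competitor in $C$ with relative perimeter $H^n(K)$, giving the one-sided bound $I_C(v)\le H^n(K)$ for every $v>0$.

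Now apply Theorem~\ref{thm:isocyl} to the full cylinder $K\times\rr$: there is $v_0'>0$ so that $I_{K\times\rr}(w)=2H^n(K)$ for $w\ge v_0'$, and the only minimizers are slabs $K\times J$ with $J$ a compact interval. Set $v_0:=v_0'/2$. For any $E\subset C$ with $|E|=v\ge v_0$, inequality \eqref{eq:propplan} combined with $|\widetilde E|=2v\ge v_0'$ yields
\[
2\,P_C(E)\;\ge\;P_{K\times\rr}(\widetilde E)\;\ge\;I_{K\times\rr}(2v)\;=\;2H^n(K),
\]
so $P_C(E)\ge H^n(K)$. Together with the competitor bound, $I_C(v)=H^n(K)$ for $v\ge v_0$. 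If moreover $E$ is an isoperimetric region, then every inequality above is an equality, so $\widetilde E$ attains $I_{K\times\rr}(2v)$ and is therefore a slab $K\times J$. The $\sigma$-symmetry of $\widetilde E$ forces $J=[-b,b]$ for some $b>0$, whence $E=K\times[0,b]$ with $b=v/H^n(K)$, proving the corollary.
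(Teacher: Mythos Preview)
Your proposal is correct and follows essentially the same approach as the paper, whose proof is the one-line sketch ``Just reflect with respect to the plane $x_{n+1}=0$ and apply Theorem~\ref{thm:isocyl}.'' You have simply fleshed out that sketch: the doubling inequality $P_{K\times\rr}(\widetilde E)\le 2P_C(E)$ (your ``subtle step'') is indeed the only point requiring care, and your density/trace reasoning for why the reflected set acquires no extra perimeter along $\intt(K)\times\{0\}$ is the right justification.
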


\begin{proof}
Just reflect with respect to the plane $x_{n+1}=0$ and apply Theorem~\ref{thm:isocyl}. Alternatively, the proof of Theorem~\ref{thm:isocyl} can also be adapted to handle this case.
\end{proof}

\section{Cilindrically bounded convex sets}
\label{sec:cylindrically}
Given a cylindrically bounded convex body $C\subset\rr^{n}\times\rr$ so that $K$ is the closure of the orthogonal projection of $C$ over $\rr^n\times\{0\}$, we shall say that $C_\infty=K\times\rr$ is the \emph{asymptotic cylinder} of $C$. Recall that, from our definition, $C$ is different from a cylinder. Assuming $C$ is unbounded in the positive vertical direction, the asymptotic cylinder can be obtained as a Hausdorff limit of downward translations of $C$. Another property of $C_\infty$ is the following: given $t\in\rr$, define
\begin{equation}
\label{eq:C_t}
C_t=C\cap (\rr^n\times\{t\}).
\end{equation}
Then the orthogonal projection of $C_t$ to $\rr^n\times\{0\}$ converges in Hausdorff distance to the basis $K$ of the asymptotic cylinder when $t\uparrow +\infty$ by \cite[Thm.~1.8.16]{sch}. In particular, this implies
\[
\lim_{t\to +\infty} H^n(C_t)=H^n(K).
\]

Let us prove now that the isoperimetric profile of $C$ is asymptotic to the one of the half-cylinder

\begin{theorem}
\label{thm:asym-profile}
Let $C\subset\rr^{n+1}$ be a cylindrically bounded convex body with asymptotic cylinder $C_\infty=K\times\rr$. Then
\begin{equation}
\label{eq:asymp-profile}
\lim_{v\to \infty} I_C(v)=\h^n(K).
\end{equation}
\end{theorem}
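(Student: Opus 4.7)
The plan is to establish $\limsup_{v\to\infty}I_C(v)\le H^n(K)$ by direct construction and $\liminf_{v\to\infty}I_C(v)\ge H^n(K)$ by comparison with the asymptotic half-cylinder via Corollary~\ref{cor:half-cylinder}.

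For the upper bound I would take the test sets $E_t:=C\cap(\rr^n\times(-\infty,t])$. Since $C$ is cylindrically bounded and unbounded above, each $E_t$ is a bounded convex body whose relative boundary in $C$ is the slice $C_t\cap\intt(C)$, so $\pp_C(E_t)=H^n(C_t)$; its volume $V(t):=\int_{-\infty}^{t}H^n(C_s)\,ds$ is continuous and strictly increasing from $0$ to $+\infty$. For each large $v$, picking $t(v)$ with $V(t(v))=v$ forces $t(v)\to\infty$, and the Hausdorff convergence $C_t\to K$ (Theorem~1.8.16 of \cite{sch}) yields $\pp_C(E_{t(v)})=H^n(C_{t(v)})\to H^n(K)$. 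Hence $I_C(v)\le\pp_C(E_{t(v)})\to H^n(K)$.

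For the lower bound, fix $\varepsilon>0$, pick $p\in\intt(K)$, and choose $T_0$ so large that $p\in\intt(C_y)$ whenever $y\ge T_0$. The radial function $\rho_y(\theta)$ of $C_y$ with respect to $p$ is concave in $y$: convexity of $C$ gives $C_{\lambda y_1+(1-\lambda)y_2}\supset\lambda C_{y_1}+(1-\lambda)C_{y_2}$, whence $\rho_{\lambda y_1+(1-\lambda)y_2}(\theta)\ge\lambda\rho_{y_1}(\theta)+(1-\lambda)\rho_{y_2}(\theta)$. Being concave, bounded above by the radial function $\rho(\theta)$ of $K$, and satisfying $\rho_y\to\rho$ uniformly in $\theta$, the function $\rho_y$ is eventually non-decreasing in $y$, and both $\rho(\theta)-\rho_y(\theta)$ and $\partial_y\rho_y(\theta)$ tend to $0$ uniformly in $\theta$ (a concave bounded function whose limit is $\sup$ has derivative tending to $0$, and uniform convergence makes this uniform). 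For $T\ge T_0$ sufficiently large, this lets me define a bilipschitz map $\Phi_T(x,y):=(\phi_y(x),y)$ from $C\cap\{y\ge T\}$ onto $K\times[T,\infty)$, where $\phi_y$ is the radial scaling $p+r\theta\mapsto p+(r\rho(\theta)/\rho_y(\theta))\theta$, with $\Lip(\Phi_T),\,\Lip(\Phi_T^{-1})\le 1+\varepsilon$.

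With this in hand, let $v_0$ be the threshold of Corollary~\ref{cor:half-cylinder}, set $V_T:=|C\cap\{y\le T\}|<\infty$, and consider any $E\subset C$ with $|E|\ge V_T+(1+\varepsilon)^{n+1}v_0$. Then $E^+:=E\cap\{y>T\}$ satisfies $|E^+|\ge(1+\varepsilon)^{n+1}v_0$, so $|\Phi_T(E^+)|\ge v_0$ by Lemma~\ref{lem:bilip}. Applying Corollary~\ref{cor:half-cylinder} to $K\times[T,\infty)$ (translation-invariant) together with Lemma~\ref{lem:bilip} again yields
\[
H^n(K)\le\pp_{K\times[T,\infty)}(\Phi_T(E^+))\le(1+\varepsilon)^n\,\pp_{C\cap\{y>T\}}(E^+).
\]
Since $\intt(C\cap\{y>T\})=\intt(C)\cap\{y>T\}$, the quantity $\pp_{C\cap\{y>T\}}(E^+)=H^n(\ptl^* E\cap\intt(C)\cap\{y>T\})$ is at most $\pp_C(E)$, so $\pp_C(E)\ge(1+\varepsilon)^{-n}H^n(K)$. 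Letting $v\to\infty$ and then $\varepsilon\to 0$ completes the lower bound. The principal obstacle is the uniform bilipschitz construction on the unbounded slab $C\cap\{y\ge T\}$: the concavity of $y\mapsto\rho_y(\theta)$, itself a consequence of the convexity of $C$, is precisely what ensures the vertical derivative of $\Phi_T$ is uniformly small and thus permits Lipschitz constants arbitrarily close to $1$ on the whole unbounded domain.
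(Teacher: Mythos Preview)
Your upper bound matches the paper's exactly. For the lower bound you take a genuinely different route. The paper argues by slicing: given a near-minimizing sequence $E_i$ of volumes $v_i\to\infty$, either $\sup_t H^n((E_i)_t)\to H^n(K)$, in which case projection onto a horizontal slice already gives $P_C(E_i)\ge\sup_t H^n((E_i)_t)$; or else the sections stay below some $v_0<H^n(K)$, and then the relative isoperimetric inequality of Lemma~\ref{lem:I_C(v)> cv} applied slice by slice together with coarea yields $|E_i|\le b+c^{-1}P_C(E_i)$, forcing $P_C(E_i)\to\infty$ and contradicting $I_C\le H^n(K)$. You instead push the problem onto the asymptotic half-cylinder via a bilipschitz map with constants near $1$ and then invoke Corollary~\ref{cor:half-cylinder} directly. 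This is correct; the bilipschitz construction you describe is essentially what the paper carries out afterwards in Proposition~\ref{prp: asympt ineq C}, and the concavity of $y\mapsto\rho_y(\theta)$ is indeed the mechanism that controls the vertical variation uniformly on the unbounded slab. The paper's slicing proof is more elementary and self-contained, requiring only the isoperimetric profile of a bounded convex slice rather than the large-volume classification for half-cylinders; your proof is shorter and more conceptual once those cylinder results are available, and it avoids the case split entirely. One small caveat: the Lipschitz bound for your pure radial scaling $\phi_y$ depends on the angular variation of the ratio $\rho/\rho_y$, not only on its size; this is precisely what the construction of \cite[Thm.~3.4]{rv1} (identity on an inner ball, linear extension along rays) is designed to handle, so you should cite that result rather than the bare formula $p+r\theta\mapsto p+(r\rho(\theta)/\rho_y(\theta))\theta$.
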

\begin{proof}
We assume that $C$ is unbounded in the positive $x_{n+1}$-direction and consider the sets $\Om(v)=C\cap (\rr^n\times (-\infty,t(v)])$, where $t(v)$ is chosen so that $\vol{\Om(v)}=v$. Then
\begin{equation*}
I_C(v)\le \pp_C(\Om (v))\le \h^n(K),
\end{equation*}
and taking limits we get
\[
\limsup_{v\to \infty} I_C(v)\le \h^n(K).
\]

Let us prove now that
\begin{equation}
\label{eq:liminfhk}
\hh^n(K)\le\liminf_{v\to\infty} I_C(v).
\end{equation}
Fix $\eps>0$. We consider a sequence of volumes $v_i\to\infty$ and a sequence $E_i\subset C$ of  finite perimeter sets of volume $v_i$ with smooth boundary, so that 
\begin{equation}
\label{eq:defei}
\pp_C(E_i)\le I_C(v_i) +\eps.
\end{equation}
We shall consider two cases. Recall that $(E_i)_t=E_i\cap (\rr^n\times\{t\})$.

\noindent Case 1. $\displaystyle\liminf_{i\to \infty}\big(\sup_{t>0}\ \h^n((E_i)_t)\big)=\h^n(K)$.

\noindent This is an easy case. Since the projection over the horizontal hyperplane does not increase perimeter we get
\[
I_C(v_i)+\eps\ge\pp_C(E_i)\ge \sup_{t>0}\ \h^n((E_i)_t).
\]
Taking inferior limit, we get \eqref{eq:liminfhk} since $\eps >0$ is arbitrary.

\noindent Case 2. $\displaystyle\liminf_{i\to \infty}\big(\sup_{t>0}\ \h^n((E_i)_t)\big)<\h^n(K)$.

\noindent In this case, passing to a subsequence, there exists $v_0<\h^n(K)$ such that $\h^n((E_i)_t)\le v_0$ for all $t$. By \cite[Thm.~1.8.16]{sch} we have $\h^n(C_t)\to \h^n(K)$. Hence there exists $t_0>0$ such that $v_0<\h^n(C_t)$ for $t\ge t_0$. By Lemma \ref{lem:I_C(v)> cv}, for $c_t=I_{C_t}(v_0)/v_0$, we get
\[
I_{C_t}(v)\ge c_t v,\ \text{for all}\ v\le v_0,\ t\ge t_0.
\]
Furthermore, as $I_{C_t}(v_0) \to I_{K}(v_0)>0$ and $I_K(v_0)>0$, we obtain the existence of $c>0$ such that $c_t>c$ for $t$ large enough. Taking $t_0$ larger if necessary we may assume $c_t>c$ holds when $t\ge t_0$. Thus for large $i\in \nn$ we obtain
\begin{align*}
\vol{E_i}&=\int_0^{\infty}\h^{n}((E_i)_t)\,dt\le b+\int_{t_0}^{\infty}\h^{n}((E_i)_t)\,dt
\\
&\le b+\int_{t_0}^{\infty}c_t^{-1}\h^{n-1}((\ptl E_i)_t)\,dt
\\
&\le b+c^{-1}\int_{0}^{\infty}\h^{n-1}((\ptl E_i)_t)\,dt\le b+ c^{-1}\pp_C(E_i),
\end{align*}
where $b=t_0 \h^{n}(K)$.
So $\pp_C(E_i)\to \infty$ when  $\vol{E_i}\to \infty$. From \eqref{eq:defei} and $I_{C}\le \h^{n}(K)$ we get a contradiction. This proves that Case 2 cannot hold and so \eqref{eq:liminfhk} is proven.
\end{proof}

Let us show now that the isoperimetric profile of $C$ is continuous and, when the boundary of $C$ is smooth enough, that the isoperimetric profile $I_C$ and its normalization $I_C^{(n+1)/n}$ are both concave non-decreasing functions. We shall need first some preliminary results.

\begin{proposition}
\label{prp: asympt ineq C}
Let $C\subset\rr^{n+1}$ be a cylindrically  bounded convex set, and $C_{\infty}=K\times \rr$ its asymptotic cylinder. Consider a diverging sequence of finite perimeter sets $\{E_i\}_{i\in\nn}\subset C$ such that $v=\lim_{i\to\infty}\vol{E_i}$. Then
\[
\liminf_{i\to\infty} P_C(E_i)\ge I_{C_{\infty}}(v) .
\]
\end{proposition}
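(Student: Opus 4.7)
The plan is to transport each $E_i$ into the asymptotic cylinder $C_\infty$ by composing a vertical translation with a small bilipschitz perturbation, and then invoke the definition of $I_{C_\infty}$.

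First I would exploit the diverging hypothesis: since $\vol{E_i}\to v<\infty$ and the sets escape every compact subset of $C$, I can select heights $t_i\to+\infty$ so that the downward translates $\widetilde{E}_i:=E_i-t_i e_{n+1}$ carry essentially all of their mass inside some large vertical slab $\rr^n\times[-R,R]$. A coarea/slicing argument then lets me choose the level $R$ generically so that the horizontal cross sections $\widetilde{E}_i\cap(\rr^n\times\{\pm R\})$ have small $\hh^n$-measure; truncating $\widetilde{E}_i$ to the slab therefore modifies its volume and relative perimeter by a quantity tending to $0$ as $R\to+\infty$.

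Next I would transfer the truncated sets into $C_\infty=K\times\rr$. By \cite[Thm.~1.8.16]{sch} the horizontal sections $C_t$ converge in Hausdorff distance to $K$ as $t\to+\infty$, so the translated bodies $(C-t_i e_{n+1})\cap(\rr^n\times[-R,R])$ converge to $K\times[-R,R]$ in Hausdorff distance as $i\to\infty$. Applying \cite[Thm.~3.4]{rv1} slice by slice to the horizontal cross sections, in the spirit of Lemma~\ref{lem:Lip CxR^k}, I would build bilipschitz maps $F_i:(C-t_i e_{n+1})\cap(\rr^n\times[-R,R])\to K\times[-R,R]$ with $\Lip(F_i)$ and $\Lip(F_i^{-1})$ tending to $1$. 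Lemma~\ref{lem:bilip} then guarantees that the images $F_i(\widetilde{E}_i\cap(\rr^n\times[-R,R]))\subset C_\infty$ have volumes tending to $v$ (up to the truncation error) and relative perimeters whose $\liminf$ is at most $\liminf_{i\to\infty}\pp_C(E_i)$.

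Finally, applying the definition of $I_{C_\infty}$ to the image sets and using continuity of the cylinder profile (Proposition~\ref{prp:I_{Cyl} is contin}), I would conclude $\liminf_{i\to\infty}\pp_C(E_i)\ge I_{C_\infty}(v)$ after a diagonal extraction letting $R\to+\infty$. The main obstacle, I expect, is the horizontal truncation: clipping a finite perimeter set at $\rr^n\times\{\pm R\}$ adds the horizontal cross-sectional measure as new perimeter, so one must pick $R=R_i$ simultaneously large enough to capture essentially all of the volume $v$, generic enough (via Fubini) that the extra cross-sectional perimeter stays small, and slowly enough in $i$ that the bilipschitz constants of $F_i$ still tend to $1$. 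The bilipschitz transfer and the final estimate are then routine applications of the lemmas already in place.
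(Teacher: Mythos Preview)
Your overall strategy---push the $E_i$ into $C_\infty$ by a bilipschitz map with constants tending to $1$, then invoke the definition and continuity of $I_{C_\infty}$---is the same as the paper's. The difference is in execution, and your emphasis is inverted relative to where the real work lies.

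The paper avoids truncation altogether. Rather than translate down and clip to a slab, it defines a single map $F:C\cap\{x_{n+1}\ge t_0\}\to C_\infty\cap\{x_{n+1}\ge t_0\}$ by $F(x,t)=(f_t(x),t)$, where $f_t:C_t\to K$ is the radial map of \cite[Thm.~3.4]{rv1} on each horizontal slice, and then shows that the restriction $F_i=F|_{C\cap\{x_{n+1}\ge i\}}$ satisfies $\Lip(F_i),\Lip(F_i^{-1})\to 1$. Since the diverging hypothesis gives $E_i\subset C\cap\{x_{n+1}\ge i\}$, the whole set $E_i$ lies in the domain of $F_i$ and no clipping, coarea selection of levels, or diagonal extraction in $R$ is needed. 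So what you flag as the ``main obstacle'' simply disappears.

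Conversely, the step you call ``routine''---building the bilipschitz map slice by slice ``in the spirit of Lemma~\ref{lem:Lip CxR^k}''---is where the actual content sits. Lemma~\ref{lem:Lip CxR^k} treats $K_i\times\rr^q$, where the slice map does not depend on the vertical coordinate; here the slice $C_t$ varies with $t$, so to bound $|F(x,t)-F(y,s)|$ you must control $|f_t(y)-f_s(y)|$. The paper does this by observing that, for each horizontal direction $u$, the radial function $t\mapsto\rho(C_t,u)$ is concave with horizontal asymptote $\rho(K,u)$, which yields $|f_t(y)-f_s(y)|\le \ell_i\,|t-s|$ with $\ell_i\to 0$ for $t,s\ge i$. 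Without this estimate the slice-by-slice map need not be Lipschitz at all, let alone with constant close to $1$. Your proposal does not address this point, and it is the heart of the argument.
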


\begin{proof}
Without loss of generality we assume $E_i\subset C\cap \{ x_{n+1}\ge i \}$. Let $r>0$ and $t_0>0$ so that the half-cylinder $B(0,r)\times [t_0,+\infty)$ is contained in $C\cap\{x_{n+1}\ge t_0\}$. Consider the horizontal sections $C_t=C\cap\{x_{n+1}=t\}$, $(C_\infty)_t=C_\infty\cap\{x_{n+1}=t\}$. We define a map $F:C\cap\{x_{n+1}\ge t_0\}\to C_\infty\cap\{x_{n+1}\ge t_0\}$ by
\[
F(x,t)=(f_t(x),t),
\]
where $f_t:C_t\to (C_\infty)_t$ is defined as in (3.6) in \cite{rv1}. For $i\in\nn$, let $F_i=F|_{C\cap\{x_{n+1}\ge i\}}$. We will check that $\max\{\Lip(F_i), \Lip(F_i^{-1})\}\to 1$ when $i\to \infty$.

Take now $(x,t)$, $(y,s)\in C\cap\{x_{n+1}\ge i\}$, and assume $t\ge s$, $i\ge t_0$. Then we have
\begin{equation}
\label{eq:fxt}
\begin{split}
|F(x,t)-F(y,s)|&=\big(|f_t(x)-f_s(y)|^2+|t-s|^2\big)^{1/2}
\\
&=\big(|f_t(x)-f_t(y)+f_t(y)-f_s(y)|^2+|t-s|^2\big)^{1/2}
\\
&=\big(|f_t(x)-f_t(y)|^2+|f_t(y)-f_s(y)|^2
\\
&\qquad\qquad\qquad+2\,|f_t(x)-f_t(y)||f_t(y)-f_s(y)|+|t-s|^2\big)^{1/2}
\end{split}
\end{equation}
We have $|(f_t(x)-f_t(y))|\le\Lip(f_t)|x-y|$. By \cite[Theorem~3.4]{rv1}, we can write $\Lip(f_t)<(1+\eps_i)$ for $t\ge i$, where $\eps_i\to 0$ when $i\to\infty$. Hence
\begin{equation}
\label{eq:eksis f t-f s < Lip conical }
|(f_t(x)-f_t(y))|\le(1+\eps_i)\,|x-y|, \qquad\text{for}\  t\ge i.
\end{equation}
We estimate now $|f_t(y)-f_s(y)|$. In case $|y|\le r$, we trivially have $|f_t(y)-f_s(y)|=0$. So we assume $|y|\ge r$. For $u\in\esf^{n-1}$, consider the functions $\rho_t(u)=\rho(C_t,u)$, $\rho(u)=\rho (K,u)$.  Observe that, for every $u\in \esf^{n}$ orthogonal to $\ptl/\ptl x_{n+1}$, the 2-dimensional half-plane defined by $u$ and $\ptl/\ptl x_{n+1}$ intersected with $C$ is a 2-dimensional convex set, and the function $t\mapsto \rho_t(u) $ is concave with a horizontal asymptotic line at height $\rho(u)$. So we have, taking $u=y/|y|$,
\begin{equation*}
\frac{|f_t(y)-f_s(y)|}{|t-s|}=\frac{\big(|y|-r\big)}{|t-s|} \Big|\frac{\rho_t(u)-r}{{\rho}(u)-r}
-\frac{{\rho}_s(u)-r}{{\rho}(u)-r}\Big|\le \frac{\big|{\rho}_t(u)-{\rho}_s(u)\big|}{|t-s|},
\end{equation*}
since $|y|-r\ge \rho(u)-r$. Using the concavity of $t\mapsto\rho_t(u)$ we get
\begin{equation*}
\frac{\big|{\rho}_t(u)-{\rho}_s(u)\big|}{|t-s|}\le \big|\rho_i(u)-\rho_{i-1}(u)\big|, \qquad\text{for}\ t,s\ge i.
\end{equation*}
Letting $\ell_i=\sup_{u\in\esf^{n-1}}|\rho_i(u)-\rho_{i-1}(u)|$, we get
\begin{equation}
\label{eq:estimation f_t(y)-f_s(y) final}
|f_t(y)-f_s(y)|\le\ell_i\,|t-s|.
\end{equation}
As $C_{\infty}$ is the asymptotic cylinder of $C$ we conclude that $\ell_i\to 0$ when $i\to \infty$.

From \eqref{eq:fxt}, \eqref{eq:eksis f t-f s < Lip conical }, \eqref{eq:estimation f_t(y)-f_s(y) final}, and trivial estimates, we obtain
\begin{equation}
\label{eq:eksis F(x,t)-F(y,s) telik}
|F_i(x,t)-F_i(y,s)|\le \big((1+\eps_i)^2+\ell_i^2+(1+\eps_i)\,\ell_i\big)^{1/2}\,|(x,t)-(y,s)|
\end{equation}

Now $\eps_i\to 0$ and $\ell_i\to 0$ as $i\to\infty$. Thus inequality \eqref{eq:eksis F(x,t)-F(y,s) telik} yields
\[
\limsup_{i\to\infty}\Lip(F_i)\le 1.
\]

Similarly we find $\limsup_{i\to\infty}\Lip(F_i^{-1})\le 1$ and since $\Lip(F_i^{-1})\Lip(F_i)\ge 1$ by Remark \ref{rem:lipcomp}, we finally get $\max\{\Lip(F_i),\Lip(F_i^{-1})\}\to 1$ when $i\to\infty$. 

Thus we have
\begin{equation}
\label{eq:conthfi}
\begin{split}
v=\lim_{i\to \infty}\vol{E_i}&=\lim_{i\to \infty} \vol{F_i(E_i)},
\\
\liminf_{i\to \infty} \pp_C(E_i)&= \liminf_{i\to \infty} \pp_{C_{\infty}}(F_i(E_i)).
\end{split}
\end{equation}
Now from \eqref{eq:conthfi} and the continuity of $I_{C_{\infty}}$ we get
\[
\liminf_{i\to \infty} \pp_C(E_i)= \liminf_{i\to \infty} \pp_{C_{\infty}}(F_i(E_i))\ge I_{C_{\infty}}(v).
\]
\end{proof}

\begin{lemma}
\label{lem:aproximacion de E por E i en C}
Let $C \subset \rr^{n+1}$ be a cylindrically bounded convex set and $C_{\infty}=K\times \rr$ its asymptotic cylinder. Let $E_\infty\subset C_{\infty} $ a bounded set of finite perimeter. Then there exists a sequence $\{E_i\}_{i\in\nn}\subset C$ of finite perimeter sets such that $\vol{E_i}=\vol{E_\infty}$ and $\lim_{i\to\infty}\pp_C(E_i)=\pp_{C_{\infty}}(E_\infty)$.
\end{lemma}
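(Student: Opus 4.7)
The plan is to construct $\{E_i\}\subset C$ by translating $E_\infty$ vertically far up the cylinder, pulling back to $C$ via the bilipschitz maps from the proof of Proposition~\ref{prp: asympt ineq C}, and then correcting the small volume defect with a smooth deformation of $E_\infty$.

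First, assume $C$ is unbounded in the positive $x_{n+1}$-direction and reuse the maps
\[
F_i: C\cap\{x_{n+1}\ge i\}\longrightarrow C_\infty\cap\{x_{n+1}\ge i\}
\]
from the proof of Proposition~\ref{prp: asympt ineq C}, which satisfy $\max\{\Lip(F_i),\Lip(F_i^{-1})\}\to 1$. Since $E_\infty$ is bounded, I pick $t_i\to +\infty$ with $E_\infty+t_ie_{n+1}\subset C_\infty\cap\{x_{n+1}>i\}$ and set $\tilde E_i:=F_i^{-1}(E_\infty+t_ie_{n+1})\subset C$. By vertical translation invariance of $C_\infty$ and Lemma~\ref{lem:bilip},
\[
|\tilde E_i|\to |E_\infty|,\qquad P_C(\tilde E_i)\to P_{C_\infty}(E_\infty).
\]

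The main obstacle is that $|\tilde E_i|$ in general does not equal $|E_\infty|$, and equality must be arranged without spoiling the perimeter limit. I would address this by a one-parameter deformation of $E_\infty$ inside $C_\infty$. Setting aside the trivial case $P_{C_\infty}(E_\infty)=0$, choose a smooth vector field $X$ compactly supported in $\intt(C_\infty)$ with $\int_{\ptl^*E_\infty}\langle X,\nu_{E_\infty}\rangle\,d\h^n\neq 0$, let $\varphi_s$ denote its flow, and set $E_\infty^{(s)}:=\varphi_s(E_\infty)$. Then $s\mapsto |E_\infty^{(s)}|$ is $C^1$ with nonzero derivative at $s=0$; moreover, since $\varphi_s$ is bilipschitz on a neighborhood of $\spt X$ with Lipschitz constants tending to $1$ as $s\to 0$, Lemma~\ref{lem:bilip} yields $P_{C_\infty}(E_\infty^{(s)})\to P_{C_\infty}(E_\infty)$ as $s\to 0$.

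Finally, the function $s\mapsto |F_i^{-1}(E_\infty^{(s)}+t_ie_{n+1})|$ is $C^1$ near $s=0$, equals $|\tilde E_i|$ at $s=0$, and its derivative at $s=0$ converges to the nonzero derivative of $|E_\infty^{(s)}|$ at $s=0$ as $i\to\infty$ (the Jacobians of $F_i^{\pm 1}$ tend to $1$). By the implicit function theorem, for all sufficiently large $i$ there is $s_i$ with $s_i\to 0$ such that $E_i:=F_i^{-1}(E_\infty^{(s_i)}+t_ie_{n+1})$ has volume exactly $|E_\infty|$. A final application of Lemma~\ref{lem:bilip}, together with the perimeter continuity in $s$ established above, gives $P_C(E_i)\to P_{C_\infty}(E_\infty)$, completing the argument.
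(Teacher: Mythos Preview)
Your strategy—translating $E_\infty$ high into the cylinder and pulling back through the bilipschitz maps $F_i^{-1}$ of Proposition~\ref{prp: asympt ineq C}—is sound, and the first step yielding $|\tilde E_i|\to|E_\infty|$ and $P_C(\tilde E_i)\to P_{C_\infty}(E_\infty)$ via Lemma~\ref{lem:bilip} is correct. The gap is in the volume-correction step: you assert that
\[
g_i(s)=\big|F_i^{-1}\big(E_\infty^{(s)}+t_ie_{n+1}\big)\big|
\]
is $C^1$ in $s$ and invoke the implicit function theorem. But the maps $F_i$ from Proposition~\ref{prp: asympt ineq C} are built from the radial maps $f_t$ of \cite{rv1} (identity on an inner ball, linear along rays outside), which are merely Lipschitz, so the Jacobian $J_i=|\det DF_i^{-1}|$ is only $L^\infty$. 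Writing $g_i(s)=\int_{E_\infty}J_i(\varphi_s(x)+t_ie_{n+1})\,|\det D\varphi_s(x)|\,dx$, the factor $J_i\circ(\varphi_s+t_ie_{n+1})$ need not be differentiable in $s$, so neither $g_i\in C^1$ nor the convergence $g_i'(0)\to g'(0)$ is justified. The repair is easy: $g_i$ \emph{is} continuous (bilipschitz maps preserve $L^1$-convergence of sets, and $s\mapsto\chi_{E_\infty^{(s)}}$ is $L^1$-continuous), and Lemma~\ref{lem:bilip} gives $|g_i(s)-g(s)|\le\big(\max\{\Lip(F_i^{-1})^{n+1},\Lip(F_i)^{n+1}\}-1\big)\,|E_\infty^{(s)}|$, hence $g_i\to g$ uniformly on compact $s$-intervals. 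Since $g'(0)\neq 0$, for every $\delta>0$ the values $g(\pm\delta)$ straddle $|E_\infty|$, so for large $i$ so do $g_i(\pm\delta)$; the intermediate value theorem then produces $s_i\in(-\delta,\delta)$ with $g_i(s_i)=|E_\infty|$, and a diagonal choice gives $s_i\to 0$. Your final perimeter limit then goes through unchanged.

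For comparison, the paper's argument is more elementary and avoids both the maps $F_i$ and the flow. It takes the truncated downward translates $C_i=(-ie_{n+1}+C)\cap\{t\ge 0\}$, which increase to $C_\infty\cap[0,\infty)$, shifts $E_\infty$ into $\{t>0\}$, and sets $G_i=E_\infty\cap C_i$. Then $|G_i|\uparrow|E_\infty|$ and $\h^n(\ptl G_i\cap\intt(C_i))\uparrow P_{C_\infty}(E_\infty)$ by monotonicity of Hausdorff measure; the volume defect $|E_\infty|-|G_i|$ is corrected by adding a disjoint small Euclidean ball $B_i\subset\intt(C_i)$, whose perimeter tends to zero. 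Your route has the merit of making the lemma an exact dual of Proposition~\ref{prp: asympt ineq C}, but it imports more machinery than necessary; the intersection-plus-ball construction needs nothing about $F_i$ beyond the monotone exhaustion $C_i\uparrow C_\infty^+$.
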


\begin{proof}
Let $e_{n+1}=(0,\ldots,0,1)\in\rr^{n+1}$. We consider the truncated downward translations of $C$ defined by
\[
C_i =(-i\,e_{n+1}+C)\cap \{t\ge 0\},\ i\in\nn.
\]
These convex bodies have the same asymptotic cylinder and
\begin{equation}
\label{eq:cup C m=C infty}
\bigcup_{i\in\nn} C_i=C_{\infty}\cap [0,\infty).
\end{equation}
Translating $E_\infty$ along the vertical direction if necessary we assume $E_\infty\subset \{t> 0\}$. Consider the sets $G_i=E_\infty\cap C_i$. For large indices $G_i$ is not empty by \eqref{eq:cup C m=C infty}.
By the monotonicity of the Hausdorff measure we have $\vol{G_i} \uparrow  \vol{E_\infty}$, and $ \h^{n}(\ptl G_i\cap \intt(C_i)) \uparrow \h^{n}(\ptl E_\infty \cap \intt(C_\infty))$. As $E_\infty$ is bounded, for large $i$ we can find Euclidean geodesic balls $B_i\subset \intt(C_i)$, disjoint from $G_i$,  such that $\vol{B_i}=\vol{E_\infty}-\vol{G_i}$. Obviously the volume and and the perimeter of these balls go to zero when $i$ goes to infinity. Then $E_i=G_i\cup B_i$ are the desired sets.
\end{proof}

\begin{proposition}
\label{prp:coniclycontis}
Let $C\subset\rr^{n+1}$ be a cylindrically bounded convex body. Then $I_C$ is continuous.
\end{proposition}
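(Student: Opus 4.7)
My plan is to establish continuity by proving upper and lower semicontinuity of $I_C$ separately at each $v>0$. For upper semicontinuity, fix $\eps>0$ and choose $E\subset C$ with $\vol{E}=v$ and $\pp_C(E)\le I_C(v)+\eps$; I would construct competitors at nearby volumes by perturbing $E$ with a small ball. For $w>v$ I would add to $E$ a disjoint Euclidean ball $B\subset\intt(C)$ of volume $w-v$, which exists because $\vol{E}$ is finite while $C$ is unbounded vertically, leaving room far above the bulk of $E$. For $w<v$ I would pick a Lebesgue density point $x$ of $E$ in $\intt(C)$ and, via the intermediate value theorem on $r\mapsto\vol{E\cap B(x,r)}$, choose $r$ small with $\vol{E\cap B(x,r)}=v-w$, so that $E\setminus B(x,r)$ realizes volume $w$. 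In either case the perimeter cost is at most $c\,|v-w|^{n/(n+1)}$ and tends to $0$, giving $\limsup_{w\to v}I_C(w)\le I_C(v)$.

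For lower semicontinuity, take $w_i\to v$ and near-optimal $E_i\subset C$ with $\vol{E_i}=w_i$ and $\pp_C(E_i)\le I_C(w_i)+1/i$, and apply a concentration-compactness decomposition. By $BV$-compactness a subsequence converges in $L^1_{loc}$ to some $F\subset C$ with $\vol{F}=v'\le v$; set $v''=v-v'$. The key technical step is to choose cutoff radii $R_i\to\infty$ so that simultaneously $\vol{E_i\cap B(0,R_i)}\to v'$ and $\hh^n(E_i\cap\ptl B(0,R_i))\to 0$; the latter comes from the coarea bound $\int_R^{2R}\hh^n(E_i\cap\ptl B(0,t))\,dt\le\vol{E_i}$, which yields a slice in $[R,2R]$ of area at most $(v+1)/R$, while a diagonal argument handles both conditions. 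Splitting $E_i=E_i^1\sqcup E_i^2$ along $R_i$, the near-additivity $\pp_C(E_i^1)+\pp_C(E_i^2)\le \pp_C(E_i)+2\hh^n(E_i\cap\ptl B(0,R_i))$, combined with lower semicontinuity of perimeter and Proposition~\ref{prp: asympt ineq C} applied to the divergent sequence $E_i^2$ with $\vol{E_i^2}\to v''$, yields $\liminf\pp_C(E_i)\ge I_C(v')+I_{C_\infty}(v'')$.

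To finish, I would prove the subadditive upper bound $I_C(v)\le I_C(v')+I_{C_\infty}(v'')$ whenever $v=v'+v''$. For fixed $\eta>0$, take bounded $A\subset C$ with $\vol{A}=v'$ and $\pp_C(A)\le I_C(v')+\eta$ (by truncating any near-optimal finite-perimeter competitor at large radius), together with a bounded isoperimetric region $\tilde B\subset C_\infty$ of volume $v''$ supplied by Proposition~\ref{prp:exicyl}. Lemma~\ref{lem:aproximacion de E por E i en C} then produces sets $B_j\subset C$ with $\vol{B_j}=v''$ and $\pp_C(B_j)\to I_{C_\infty}(v'')$, living at heights tending to $+\infty$ and hence eventually disjoint from $A$. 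Then $A\cup B_j$ competes for $I_C(v)$ with perimeter $\to I_C(v')+I_{C_\infty}(v'')+\eta$, and letting $\eta\to 0$ gives the bound, whence $\liminf I_C(w_i)\ge I_C(v)$. The main obstacle is the concentration-compactness step: arranging $R_i$ so that the $L^1_{loc}$-limit captures exactly $v'$ of volume while the slice perimeter vanishes requires a careful diagonal argument weaving together $BV$-compactness, the coarea estimate, and the a priori bounds on $\{E_i\}$.
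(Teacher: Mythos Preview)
Your proposal is correct and follows essentially the same route as the paper: lower semicontinuity via a concentration--compactness splitting of a near-minimizing sequence into a locally convergent piece and a diverging piece, invoking Proposition~\ref{prp: asympt ineq C} on the latter and the subadditive inequality $I_C(v)\le I_C(v')+I_{C_\infty}(v'')$ (which the paper uses, via Lemma~\ref{lem:aproximacion de E por E i en C}, just as you do). The only notable difference is in the upper semicontinuity step: the paper perturbs a bounded near-minimizer by a compactly supported smooth vector field on a regular portion of its boundary (giving Lipschitz control in the volume increment), whereas you add or remove small balls (giving a weaker $|v-w|^{n/(n+1)}$ bound, which is still enough for continuity); your version is more elementary and avoids the need to locate a smooth open portion of $\partial_C E$.
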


\begin{proof}
Let $C_\infty=K\times\rr$ be the asymptotic cylinder of $C$. The continuity of the isoperimetric profile $I_C$ at $v=0$ is proven by comparison with geodesic balls intersected with $C$.

Fix $v>0$ and let $\{v_i\}_{i\in\nn}$ be a sequence of positive numbers converging to $v$. Let us prove first the lower semicontinuity of $I_C$. By the definition of isoperimetric profile, given $\eps>0$, there is a finite perimeter set $E_i$ of volume $v_i$ so that $I_C(v_i)\le\pp_C(E_i)\le I_C(v_i)+\tfrac{1}{i}$, for every $i\in\nn$. Reasoning as in \cite[Thm.~2.1]{r-r}, we can decompose $E_i=E_i^c\cup E_i^d$ into convergent and diverging pieces, and there is a finite perimeter set $E\subset C$, eventually empty, so that
\begin{equation}\label{eq:ena parad}
\begin{split}
\vol{E_i}&=\vol{E_i^c}+\vol{E_i^d},
\\
\pp_C(E_i)&=\pp_C(E_i^c)+\pp_C(E_i^d),
\\
\vol{E_i^c}&\to\vol{E},
\\
\pp_C(E)&\le \liminf_{i\to\infty} \pp_C(E_i^c).
\end{split}
\end{equation}
Let $w_1=\vol{E}$. By Proposition~\ref{prp:exicyl}, there exists an isoperimetric region $E_\infty\subset C_{\infty}$ of volume $\vol{E_\infty}=w_2=v-w_1$. By Proposition~\ref{prp: asympt ineq C} we have $\pp_{C_{\infty}}(E_\infty)\le \liminf_{i\to\infty} \pp_C(E_i^d)$. Hence
\begin{align*}
I_C(v)\le I_C(w_1)+I_{C_{\infty}}(w_2)&\le\pp_C(E)+\pp_{C_{\infty}}(E_\infty)
\\
&\le\liminf_{i\to\infty} \pp_C(E_i^c)+\liminf_{i\to\infty} \pp_C(E_i^d)
\\
&\le\liminf_{i\to\infty} \pp_C(E_i)
\\
&=\liminf_{i\to\infty} I_C(v_i).
\end{align*}
To prove the upper semicontinuity of $I_C$ we will use a standard  variational argument. Fix $\eps>0$. 
We can find a bounded set $E\subset C$ of volume $v$ with $I_C(v)\le \pp_C(E)\le I_C(v)+ \eps$ and a smooth open portion $U\subset \ptl_{C} E$ contained in the relative boundary. We construct a variation compactly supported in $U$ of $E$ by sets $E_s$ so that $\vol{E_s}=v+s$ for $s\in (-\delta,\delta)$. Then there is $M>0$ so that
\[
|\h^{n}(\ptl_C E_s)-\h^{n}(\ptl_C E)|\le M\,|\vol{E_s}-\vol{E}|.
\]
Hence
\begin{align*}
I_C(v+s)&\le \hh^n(\ptl_C E_s)\le \hh^n(\ptl_C E)
\\
&\le I_C(v)+\eps +M\,\big(\vol{E_s}-\vol{E}\big)
\\
&=I_C(v)+\eps+Ms.
\end{align*}
Taking a sequence $v_i\to v$  we get $\limsup_{i\to\infty}I_C(v_i)\le I_C((v)+\eps$. As $\eps$ is arbitrary we obtain the upper semicontinuity of $I_C$.
\end{proof}

\begin{proposition}
\label{prp:excylindical}
Let $C\subset\rr^{n+1}$ be a cylindrically bounded convex body with  asymptotic cylinder $C_\infty=K\times\rr$. Assume that both $C$ and $C_\infty$ have smooth boundary. Then isoperimetric regions exist on $C$ for large volumes and have connected boundary. Moreover $I_C^{(n+1)/n}$ and so $I_C$ are concave non-decreasing functions. 
\end{proposition}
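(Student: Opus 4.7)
\emph{Plan.} My plan is to obtain the four assertions in the order: concavity of $I_C^{(n+1)/n}$, monotonicity of $I_C$, existence at large volumes, and connectedness. For the concavity I would approximate $C$ from below by the bounded convex bodies $C_j:=C\cap\{x_{n+1}\le j\}$, $j\in\nn$. Each $C_j$ is a bounded convex body, so by \cite[Cor.~6.11]{MR2507637} and \cite[Cor.~4.2]{rv1} the function $I_{C_j}^{(n+1)/n}$ is concave on $[0,|C_j|]$; since pointwise limits of concave functions are concave, it would suffice to prove the pointwise convergence $I_{C_j}(v)\to I_C(v)$. The inequality $\limsup_j I_{C_j}(v)\le I_C(v)$ is immediate by truncating an almost-minimizer for $I_C(v)$ at a sufficiently large height; for the reverse inequality I would invoke uniform perimeter density bounds to keep near-minimizers $E_j\subset C_j$ of fixed volume $v$ bounded and at positive distance from the top face $\{x_{n+1}=j\}$, so that $\pp_{C_j}(E_j)=\pp_C(E_j)$. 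Concavity and non-negativity of $I_C$ with $I_C(0)=0$ then immediately yield monotonicity: otherwise the non-increasing right-derivative of a concave function would eventually drive $I_C$ below zero.

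\emph{Existence at large volumes.} Given $v$ large, take a minimizing sequence $\{E_i\}$ with $|E_i|=v$. Following the decomposition in the proof of Proposition~\ref{prp:coniclycontis}, split $E_i=E_i^c\cup E_i^d$ into convergent and divergent parts with $E_i^c\to E\subset C$; then combining Proposition~\ref{prp: asympt ineq C} and Lemma~\ref{lem:aproximacion de E por E i en C} one obtains the identity
\begin{equation*}
I_C(v)=I_C(w_1)+I_{C_\infty}(w_2),\qquad w_1=|E|,\ w_2=v-w_1.
\end{equation*}
It remains to rule out $w_2>0$ for $v$ large, by contradiction along sequences $v_k\to\infty$ with $w_2(v_k)>0$. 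If $w_2(v_k)\to\infty$, Theorem~\ref{thm:isocyl} gives $I_{C_\infty}(w_2(v_k))=2\h^n(K)$, hence $I_C(v_k)\ge 2\h^n(K)$, contradicting the upper bound $I_C(v_k)\le\h^n(K)$ produced in the proof of Theorem~\ref{thm:asym-profile}. If $w_2(v_k)$ stays in some $[\delta,D]$ with $\delta>0$, then $w_1(v_k)\to\infty$ and $I_C(w_1(v_k))\to\h^n(K)$ by Theorem~\ref{thm:asym-profile}, so $I_C(v_k)\ge\h^n(K)+I_{C_\infty}(\delta)+o(1)$, again contradicting $I_C(v_k)\le\h^n(K)$. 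The delicate case $w_2(v_k)\to 0^+$ uses the concavity already proven: since $I_C$ is concave and bounded, $I_C'(u)\to 0$ at infinity, so $I_C(v_k)-I_C(w_1(v_k))\le w_2(v_k)\,I_C'(w_1(v_k))=o(w_2(v_k))$, whereas the small-volume isoperimetric lower bound $I_{C_\infty}(w_2(v_k))\ge c\,w_2(v_k)^{n/(n+1)}$ (from Theorem~\ref{thm:optinsmalvol} and positivity of solid angles) dominates $o(w_2(v_k))$ since $n/(n+1)<1$. This contradicts the identity, forces $w_2=0$, and makes $E\subset C$ an isoperimetric region of volume $v$.

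\emph{Connectedness and main obstacle.} Connectedness of $E$ as a set follows from concavity of $I_C^{(n+1)/n}$ together with the strict superadditivity of $t\mapsto t^{(n+1)/n}$, as in Proposition~\ref{prp:Concavdeperfcyldro} and \cite[Thm.~4.6]{rv1}: a splitting $E=E_1\sqcup E_2$ with $|E_1|,|E_2|>0$ would give both $I_C(v)\ge I_C(|E_1|)+I_C(|E_2|)$ and the subadditivity of $I_C^{(n+1)/n}$, which are incompatible. Connectedness of the relative boundary $\ptl E\cap\intt(C)$ would then follow from a Stredulinsky--Ziemer argument as in Theorem~\ref{thm:sz}, since the alternative slab case would require $C$ to contain a right cylindrical portion between two parallel totally geodesic boundary components, incompatible (in the smooth, cylindrically bounded setting) with $C$ not being a cylinder. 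The main obstacle I anticipate is the concavity step, specifically the pointwise convergence $I_{C_j}(v)\to I_C(v)$: one must ensure that isoperimetric regions in the truncations $C_j$ do not exploit the artificial top face, which requires uniform height bounds on near-minimizers. A plausible alternative is to smooth off the top of each $C_j$ in a way that preserves the asymptotic structure of $C$, reducing the question to profile convergence under Lipschitz-type approximations in the spirit of Lemma~\ref{lem:Lip CxR^k}.
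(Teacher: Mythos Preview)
Your approach diverges from the paper's in a key way. The paper does \emph{not} obtain concavity by approximation; instead it first establishes the splitting $I_C(v)=\pp_C(E)+\pp_{C_\infty}(E_\infty)$ (your identity) and then applies a second-variation \emph{stability} argument to the pair $(E,E_\infty)$ in the disjoint union $C\sqcup C_\infty$, following Bayle--Rosales \cite{bay-rosal}. Because both $\ptl C$ and $\ptl C_\infty$ are smooth, the boundary regularity of Lemma~\ref{lem:n-7}(iii)--(v) is available, and stability (as in \cite[Prop.~3.9]{bay-rosal}) forces one of $E$, $E_\infty$ to be empty while the other has connected free boundary; the residual slab alternative is discarded because a slab in $C_\infty$ has perimeter $2H^n(K)>H^n(K)\ge I_C$. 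This yields existence for large $v$ and connected boundary simultaneously. Concavity of $I_C^{(n+1)/n}$ then comes from the same second-variation computation applied to whichever of $E$, $E_\infty$ survives, as in \cite[Thm.~3.2]{bay-rosal}, combined with the continuity of Proposition~\ref{prp:coniclycontis} and \cite[Lemma~3.2]{MR1803220}. The smoothness hypothesis is used precisely for these variational formulas.

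The gap you flag in your concavity step is genuine and not easily closed. Isoperimetric regions $E_j\subset C_j$ can exploit the artificial top face: the half-slabs $C_j\cap\{x_{n+1}\ge j-t\}$ are competitors in $C_j$ with relative perimeter $H^n(C_{j-t})\le H^n(K)$ for every prescribed volume, so there is no mechanism keeping minimizers away from $\{x_{n+1}=j\}$. Decomposing $E_j$ into a ``staying'' part and an ``escaping'' part yields at best $\liminf_j I_{C_j}(v)\ge\inf_{0\le w\le v}\big(I_C(w)+I_{C_\infty^+}(v-w)\big)$, and recovering $I_C(v)$ from this requires $I_{C_\infty^+}\ge I_C$, which is not obvious and appears to need concavity-type information already---a circularity. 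Smoothing the top of $C_j$ does not help: the smoothed cap is still free boundary absent from $C$, and minimizers can migrate there just as well. A telling sign is that your concavity argument never invokes the smoothness of $\ptl C$ or $\ptl C_\infty$; if it worked, it would prove more than the proposition asserts. Finally, your existence case analysis is an attractive alternative to stability for the cases $w_2\to\infty$ and $w_2$ bounded away from $0$, but the delicate case $w_2(v_k)\to 0^+$ rests on $I_C'(u)\to 0$, which you derive from the concavity not yet secured, so that branch is circular.
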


\begin{proof}
Fix $v>0$. By \cite[Thm.~2.1]{r-r}  there exists an isoperimetric region $E\subset C$ (eventually empty) of volume $\vol{E}=v_1\le v$, and a diverging sequence $\{E_i\}_{i\in\nn}$ of finite perimeter sets of volume $v_2=v-v_1$, such that
\begin{equation}
\label{eq.parestab1}
I_C(v)=\pp_C(E)+\lim_{i\to\infty}\pp_C(E_i)
\end{equation}

By Proposition \ref{prp:exicyl}, there is an isoperimetric region $E_\infty\subset C_\infty$ of volume~$v_2$. We claim
\begin{equation}
\label{eq.parestab2}
\lim_{i\to\infty}\pp_C(E_i)= \pp_{C_{\infty}}(E_\infty).
\end{equation}
If \eqref{eq.parestab2} does not hold, then Proposition~\ref{prp: asympt ineq C} implies $\liminf_{i\to\infty} \pp_C(E_i)> I_{C_{\infty}}(v_2)$, and Lemma \ref{lem:aproximacion de E por E i en C} provides a sequence of finite perimeter sets in $C$, of volume $v_2$, approaching $E_\infty$. This way we can build a minimizing sequence of sets of volume $v$ whose perimeters converge to some quantity strictly smaller than $I_C(v)$, a contradiction that proves \eqref{eq.parestab2}. From \eqref{eq.parestab1} and \eqref{eq.parestab2} we get
\begin{equation}
\label{eq.parestab3}
I_C(v)=P_C(E)+P_{C_\infty}(E_\infty).
\end{equation}

Reasoning as in the proof of Theorem~2.8 in \cite{MR1883725}, the configuration $E\cup E_\infty$ in the disjoint union of the sets $C$, $C_\infty$ must be stationary and stable, since otherwise we could slightly perturb $E\cup E_\infty$, keeping constant the total volume, to get a set ${E}'\cup {E'_\infty}$ such that
\begin{equation*}
\pp_C(E')+\pp_{C_{\infty}}(E'_\infty)<\pp_C(E)+\pp_{C_{\infty}}({E_\infty}),
\end{equation*}
contradicting \eqref{eq.parestab3}.




Now as $C, C_{\infty}$ are convex and have smooth boundary, we can use a stability argument similar to that in \cite[Proposition 3.9]{bay-rosal}  to conclude that one of the sets $E$ or $E_\infty$ must be empty and the remaining one must have connected boundary. A third possibility, that $\ptl_C E \cup\ptl_{C_{\infty}} E_\infty$ consists of a finite number of hyperplanes intersecting orthogonally both $C$ and $C_{\infty}$, can be discarded since in this case $E_\infty$ would be a slab with $P_{C_\infty}(E_\infty)=2 H^n(K)>I_C$.

If $v$ is large enough so that isoperimetric regions in $C_\infty$ are slabs, then the above argument shows existence of isoperimetric regions of volume $v$ in $C$. 

As $I_C$ is always realized by an isoperimetric set in $C$ or $C_{\infty}$, the arguments in \cite[Theorem 3.2]{bay-rosal} imply that the second lower derivative of $I_C^{(n+1)/n}$ is non-negative. As $I_C^{(n+1)/n}$ is continuous by Proposition~\ref{prp:coniclycontis}, Lemma~3.2 in \cite{MR1803220} implies that $I_C^{(n+1)/n}$ is concave and hence non-decreasing. Then $I_C$ is also concave as a composition of $I_C^{(n+1)/n}$ with the concave non-increasing function $x\mapsto x^{n/(n+1)}$.

The connectedness of the isoperimetric regions in $C$ follows easily as an application of the concavity of $I_C^{(n+1)/n}$, as in \cite[Theorem 4.6]{rv1}.
\end{proof}
The concavity of $I_C^{(n+1)/n}$ also implies the following Lemma. The proof in \cite[Lemma 4.9]{rv1} for convex bodies also holds in our setting.
\begin{lemma}
\label{lem:_IC<la C}
Let $C$ be be a cylindrically bounded convex body with  asymptotic cylinder $C_\infty$. Assume that both $C$ and $C_\infty$ have smooth boundary. Let $\la\ge 1$. Then
\begin{equation}
\label{eq:ilacic}
I_{\la C}(v)\ge I_C(v)
\end{equation}
for all $0\le v\le\vol{C}$.
\end{lemma}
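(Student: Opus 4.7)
The plan is to combine the scaling behaviour of the isoperimetric profile under dilations with the concavity of $I_C^{(n+1)/n}$ furnished by Proposition~\ref{prp:excylindical}. First, I would apply Lemma~\ref{lem:bilip} to the dilation $h_\la:C\to \la C$, $x\mapsto \la x$, which is bilipschitz with $\Lip(h_\la)=\la$ and $\Lip(h_\la^{-1})=1/\la$. Scaling an arbitrary finite perimeter set in $C$ of volume $v/\la^{n+1}$ up to $\la C$ (and conversely, scaling an arbitrary set of volume $v$ in $\la C$ back down to $C$) and taking infima yields the exact homogeneity
\begin{equation*}
I_{\la C}(v)=\la^{n}\,I_C\big(v/\la^{n+1}\big),\qquad v\ge 0.
\end{equation*}
In particular, \eqref{eq:ilacic} reduces to $\la^{n}\,I_C(v/\la^{n+1})\ge I_C(v)$, which after raising to the $(n+1)/n$-th power and setting $f:=I_C^{(n+1)/n}$ and $t:=1/\la^{n+1}\in(0,1]$ is equivalent to $f(tv)\ge t\,f(v)$.

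The key step is then immediate from Proposition~\ref{prp:excylindical}: the function $f$ is concave and non-decreasing on $[0,\infty)$, and $f(0)=0$ (the latter follows from comparison with small geodesic balls, as used at the start of the proof of Proposition~\ref{prp:coniclycontis}). Concavity combined with $f(0)=0$ gives
\begin{equation*}
f(tv)=f\big(tv+(1-t)\cdot 0\big)\ge t\,f(v)+(1-t)\,f(0)=t\,f(v),
\end{equation*}
which is precisely the required inequality.

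There is no genuine obstacle: once the concavity of $I_C^{(n+1)/n}$ with $I_C^{(n+1)/n}(0)=0$ is in hand, the argument is a verbatim transcription of the convex body proof in \cite[Lemma~4.9]{rv1}. The hypothesis $\la\ge 1$ enters only to place $t=1/\la^{n+1}$ in $(0,1]$, which is the range where concavity can be applied in the correct direction via the chord from $(0,0)$ to $(v,f(v))$.
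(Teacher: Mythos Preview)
Your proof is correct and is exactly the argument the paper has in mind: the paper does not spell out a proof but merely observes that the concavity of $I_C^{(n+1)/n}$ from Proposition~\ref{prp:excylindical} allows the proof of \cite[Lemma~4.9]{rv1} to go through verbatim, which is precisely the scaling-plus-concavity argument you have written out.
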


Our aim now is to get a density estimate for isoperimetric regions of large volume in Theorem~\ref{thm:leon rigot lem 42cyl}. This estimate would imply the convergence of the free boundaries of large isoperimetric regions to hyperplanes in Hausdorff distance given in Theorem~\ref{thm:thmcyli}.

\begin{proposition}
\label{prp:cylrelativ}
\mbox{}
Let $C$ be cylindrically bounded convex body with asymptotic cylinder $C_{\infty}$. Given $r_0>0$, there exist positive constants $M$, $\ell_1$, only depending on $r_0$ and $C$,  $C_{\infty}$, and a universal positive constant $\ell_2$ so that
\begin{equation}
\label{eq:isnqgdbl1}
\pp_{\clb_C(x,r)}(v)\ge M\, {\min \{v,\vol{\clb_C(x,r)}-v\}}^{n/(n+1)},\end{equation}
for all $x\in C$, $0<r\le r_0$, and $0<v<\vol{\clb(x,r)}$. Moreover
\begin{equation}
\label{eq:isnqgdbl1a}
\ell_1 r^{n+1} \le \vol{\clb_C(x,r)} \le \ell_2 r^{n+1},
\end{equation}
for any $x\in C$, $0<r\le r_0$.
\end{proposition}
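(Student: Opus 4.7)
The approach parallels that of Proposition~\ref{prp:heramientas para existencia}, where compactness of $C/\mathrm{Isom}(C)$ reduced the problem to \cite[Thm.~4.12]{rv1}; here that compactness fails, but we can substitute bilipschitz equivalence with the asymptotic cylinder $C_\infty$ far from the bottom of $C$. The universal upper bound is immediate: $\vol{\clb_C(x,r)} \le \vol{\clb(x,r)} = \omega_{n+1}\, r^{n+1}$, so take $\ell_2 = \omega_{n+1}$.

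For the volume lower bound and the relative isoperimetric inequality, fix a height $t_0 > 0$ large enough that the bilipschitz map $F \colon C \cap \{x_{n+1} \ge t_0 - r_0\} \to C_\infty \cap \{x_{n+1} \ge t_0 - r_0\}$ constructed in the proof of Proposition~\ref{prp: asympt ineq C} satisfies $\max\{\Lip(F), \Lip(F^{-1})\} \le 2$. When $x_{n+1} \le t_0$, the ball $\clb_C(x,r)$ is contained in the bounded convex body $D' := C \cap \{x_{n+1} \le t_0 + r_0\}$, and the claimed uniform bounds on balls of radius at most $r_0$ centered in $D'$ follow directly from \cite[Thm.~4.12]{rv1} applied to $D'$. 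When $x_{n+1} > t_0$, apply Proposition~\ref{prp:heramientas para existencia} to $C_\infty$ with radius $2r_0$ to obtain uniform constants $M_0, \ell_1^{(0)}$ for $C_\infty$-balls of radius at most $2r_0$, then transfer them back to $C$ through $F$ using Lemma~\ref{lem:bilip}, observing that $F(\clb_C(x,r))$ is sandwiched between the concentric $C_\infty$-balls $\clb_{C_\infty}(F(x), r/2) \subset F(\clb_C(x,r)) \subset \clb_{C_\infty}(F(x), 2r)$. The volume lower bound then reads $\vol{\clb_C(x,r)} \ge 2^{-(n+1)}\vol{F(\clb_C(x,r))} \ge 2^{-(n+1)}\vol{\clb_{C_\infty}(F(x),r/2)} \ge 2^{-2(n+1)}\ell_1^{(0)}\,r^{n+1}$. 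Taking the minima of the constants arising from the two cases produces the claimed $M$ and $\ell_1$.

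The main obstacle is that $F$ does not send geodesic balls to geodesic balls, so Proposition~\ref{prp:heramientas para existencia} cannot be invoked directly for the image $F(\clb_C(x,r))$; comparison with the circumscribing ball $\clb_{C_\infty}(F(x), 2r)$ would control $\pp$ in the wrong direction, since $\pp_{\Omega_1}(\cdot) \le \pp_{\Omega_2}(\cdot)$ whenever $\Omega_1 \subset \Omega_2$. The resolution is to note that the argument proving \cite[Thm.~4.12]{rv1} is essentially metric in nature, resting on a uniform doubling property and a Poincaré-type inequality that survive, with deteriorated constants, under bilipschitz distortion; applied to the bilipschitz image under $F^{-1}$ of a $C_\infty$-ball it yields the relative isoperimetric inequality on $\clb_C(x,r)$ with a constant depending only on $L = 2$ and on the uniform $M_0$ for $C_\infty$.
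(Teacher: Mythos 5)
Your overall strategy --- splitting into centers at bounded height (handled inside the truncated bounded body $D'$) and centers far up the cylinder (handled by comparison with $C_\infty$ through the bilipschitz map $F$) --- is the same dichotomy the paper exploits, and your treatment of the volume bounds \eqref{eq:isnqgdbl1a} is correct. The gap is in the relative isoperimetric inequality \eqref{eq:isnqgdbl1} for high centers, and you have put your finger on it without closing it. A relative isoperimetric inequality is a statement about a specific domain: via Lemma~\ref{lem:bilip} it transfers from $\clb_{C_\infty}(F(x),\rho)$ to $F^{-1}(\clb_{C_\infty}(F(x),\rho))$, but that set is not $\clb_C(x,r)$, and the sandwich $F^{-1}(\clb_{C_\infty}(F(x),r/2))\subset\clb_C(x,r)\subset F^{-1}(\clb_{C_\infty}(F(x),2r))$ gives nothing, since --- as you yourself note --- such an inequality neither restricts to subdomains (thin convex subdomains have bad constants) nor extends to larger ones. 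Appealing to the ``essentially metric nature'' of the proof of \cite[Thm.~4.12]{rv1} does not repair this: whatever doubling or Poincar\'e data you transport through $F$ still lives on the wrong domain.

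What makes \cite[Thm.~4.12]{rv1} apply uniformly to the family $\clb_C(x,r)$, $0<r\le r_0$, is a single quantitative hypothesis: a uniform lower bound $\Lambda_0=\inf_{x\in C}\inr(\clb_C(x,r_0))>0$, because a convex body squeezed between concentric balls of radii $\lambda r$ and $r$ satisfies the relative isoperimetric inequality with a constant depending only on $n$ and $\lambda$. The paper's proof verifies exactly this: take a minimizing sequence $x_i$ for the inradius; if bounded, extract a convergent subsequence; if unbounded, translate the balls vertically so the centers lie in $\{x_{n+1}=0\}$ and use the Hausdorff convergence of the translated bodies to $C_\infty$ (from the proof of Proposition~\ref{prp: asympt ineq C}) to conclude that the translated balls converge to $\clb_{C_\infty}(x_0,r_0)$, whose inradius is positive. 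To repair your argument, replace the attempted transfer of the isoperimetric inequality by a transfer (or direct proof) of this inradius bound; note that even this is not immediate from $\Lip(F^{\pm 1})\le 2$ alone, since $F^{-1}$ of a Euclidean ball inscribed in $\clb_{C_\infty}(F(x),r/2)$ need not contain a Euclidean ball of comparable radius, which is why the Hausdorff-convergence argument is the clean route.
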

\begin{proof}
Reasoning as in \cite[Theorem 4.12]{rv1}, it is enough to show
\begin{equation*}
\Lambda_0=\inf_{x\in C}\inr(\clb_C(x,r_0))>0.
\end{equation*}
To see this consider a sequence $\{x_i\}_{i\in\nn}$ so that $\inr(\clb_C(x_i,r_0))$ converges to $\Lambda_0$. If $\{x_i\}_{i\in\nn}$ contains a bounded subsequence then we can extract a convergent subsequence to some point $x_0\in C$ so that $\Lambda_0=\inr(\clb(x_0,r_0)>0$. If $\{x_i\}_{i\in\nn}$ is unbounded, we translate vertically the balls $\clb_C(x_i,r_0)$ so that the new centers $x_i'$ lie in the hyperplane $x_{n+1}=0$. Passing to a subsequence we may assume that $x_i'$ converges to some point $x_0\in C_\infty$. By the proof of Proposition~\ref{prp: asympt ineq C}, we have Hausdorff convergence of the translated balls to $\clb_{C_\infty}(x_0,r_0)$ and so $\Lambda_0=\inr(\clb_{C_\infty}(x_0,r_0))>0$.
\end{proof}

The next Lemma appeared in \cite[Lemma~5.4]{rv1}. We recall the proof here for completeness.

\begin{lemma}
\label{lem:fv}
For any $v>0$, consider the function $f_v:[0,v]\to\rr$ defined by
\begin{equation*}
f_v(s)=s^{-n/(n+1)}\,\bigg(\bigg(\frac{v-s}{v}\bigg)^{n/(n+1)}-1\bigg).
\end{equation*}
Then there is a constant $0<c_2<1$ that does not depends on $v$ so that $f_v(s)\ge -(1/2)\,v^{-n/(n+1)}$ for all $0\le s\le c_2\,v$.
\end{lemma}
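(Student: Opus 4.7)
The plan is to scale out the $v$-dependence by the substitution $t = s/v \in [0,1]$. With $\alpha = n/(n+1) \in (0,1)$, one has $s^{-\alpha} = v^{-\alpha}\,t^{-\alpha}$ and $((v-s)/v)^\alpha = (1-t)^\alpha$, so
\[
f_v(s) = v^{-\alpha}\,t^{-\alpha}\bigl((1-t)^\alpha - 1\bigr),
\]
and the desired inequality $f_v(s) \ge -\tfrac{1}{2}\,v^{-\alpha}$ becomes equivalent to
\[
g(t) := t^{-\alpha}\bigl(1 - (1-t)^\alpha\bigr) \le \tfrac{1}{2}.
\]
Since $g$ no longer depends on $v$, the problem reduces to exhibiting some $c_2 \in (0,1)$, depending only on $n$, such that $g(t) \le 1/2$ for every $t \in (0, c_2]$.

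For the estimate on $g$, I would use the elementary inequality $x^\alpha \ge x$ valid for $x \in [0,1]$ and $\alpha \in [0,1]$. Applied with $x = 1-t$, this gives $(1-t)^\alpha \ge 1 - t$, hence $1 - (1-t)^\alpha \le t$, and therefore
\[
g(t) \le t^{1-\alpha} = t^{1/(n+1)}.
\]
One can then simply take $c_2 = (1/2)^{n+1} \in (0,1)$: since $c_2^{1/(n+1)} = 1/2$, we get $g(t) \le t^{1/(n+1)} \le c_2^{1/(n+1)} = 1/2$ for all $t \in (0, c_2]$, which is exactly what is required, with $c_2$ manifestly independent of $v$.

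There is no genuine obstacle here: the scaling eliminates $v$, and the classical bound $(1-t)^\alpha \ge 1-t$ suffices. Alternatively one could simply invoke continuity, since $g$ extends continuously to $[0,1]$ with $g(0)=0$ by the Taylor expansion $1-(1-t)^\alpha = \alpha t + O(t^2)$, whence such a $c_2$ exists automatically; but the explicit choice above is cleaner and yields a concrete value.
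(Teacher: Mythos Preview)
Your proof is correct. Both you and the paper begin with the same scaling substitution $t=s/v$, which reduces the claim to a $v$-independent inequality for the function $f_1$ (equivalently, your $g(t)=-f_1(t)$). From there the arguments diverge: the paper computes the derivative of $f_1$ to show it is strictly decreasing on $(0,1)$, and then invokes continuity together with $f_1(0)=0$ to obtain the existence of some $c_2$ with $f_1\ge -1/2$ on $[0,c_2]$; you instead use the elementary concavity bound $(1-t)^{\alpha}\ge 1-t$ for $\alpha\in(0,1)$ to get the explicit estimate $g(t)\le t^{1/(n+1)}$, which yields the concrete value $c_2=(1/2)^{n+1}$. Your route is shorter, avoids differentiating, and produces an explicit constant; the paper's route gives the additional structural information that $f_1$ is monotone, which is not needed for the lemma but clarifies why the threshold exists.
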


\begin{proof}
By continuity, $f_v(0)=0$. Observe that $f_v(v)=-v^{-n/(n+1)}$ and that, for $s\in [0,1]$, we have $f_v(sv)=f_1(s)\,v^{-n/(n+1)}$. The derivative of $f_1$ in the interval $(0,1)$ is given by 
\[
f_1'(s)=\frac{n}{n+1}\,\frac{(s-1)+(1-s)^{n/(n+1)}}{s-1}\,s^{-1-n/(n+1)},
\]
which is strictly negative and so $f_1$ is strictly decreasing. Hence there exists $0<c_2<1$ such that $f_1(s)\ge -1/2$ for all $s\in [0,c_2]$. This implies $f_v(s)=f_1(s/v)\,v^{-n/(n+1)}\ge -(1/2)\,v^{-n/(n+1)}$ for all $s\in [0,c_2 v]$.
\end{proof}
\begin{theorem}
\label{thm:leon rigot lem 42cyl}
\mbox{}
Let $C\subset \rr^{n+1}$ be a cylindrically bounded convex body with asymptotic cylinder $C_\infty=K\times\rr$. Assume that $C$, $C_\infty$ have smooth boundary. Let $E\subset C$ an isoperimetric region of volume $v>1$.  Choose $\eps$ so that
\begin{equation}
\label{eq:epsfine}
0<\eps<\bigg\{{\ell_2}^{-1},c_2,\frac{{\ell_2}^n}{8^{n+1}},{\ell_2}^{-1}\bigg(\frac{I_C(1)}{4}\bigg)^{n+1}\bigg\},
\end{equation}
where $c_2$ is the constant in Lemma~\ref{lem:fv}., and $\ell_1$, $\ell_2$ the constants in Proposition~\ref{prp:cylrelativ}.

Then, for any $x\in C$ and $R\le 1$ so that $h(x,R)\le\eps$, we get
\begin{equation}
\label{eq:hr/2=0}
h(x,R/2)=0.
\end{equation}
Moreover, in case $h(x,R)=\vol{E\cap B_C(x,R)}\vol{B_C(x,R)}^{-1}$, we get $|E\cap B_C(x,R/2)|=0$ and, in case $h(x,R)=\vol{B_C(x,R)\setminus E}\vol{B_C(x,R)}^{-1}$, we have $|B_C(x,R/2)\setminus E|=0$.
\end{theorem}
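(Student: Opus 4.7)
The plan is to carry out a standard De Giorgi--Leon--Rigot clearing-out argument. The essential ingredients are (a) the concavity of $I_C^{(n+1)/n}$ proven in Proposition~\ref{prp:excylindical}, (b) the volume bound $\vol{B_C(x,R)}\le\ell_2 R^{n+1}$ from Proposition~\ref{prp:cylrelativ}, and (c) the elementary Lemma~\ref{lem:fv}. I treat the case $h(x,R)=\vol{E\cap B_C(x,R)}/\vol{B_C(x,R)}\le\eps$; the complementary case is symmetric, replacing the competitor $E\setminus B_C(x,r)$ below by $E\cup B_C(x,r)$ and invoking only the non-decreasing character of $I_C$ to conclude $e(r)\le s'(r)$. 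Write $m(r)=\vol{E\cap B_C(x,r)}$, $e(r)=\hh^n(\ptl E\cap B_C(x,r)\cap\intt(C))$ and $s(r)=\hh^n(E\cap\ptl B_C(x,r)\cap\intt(C))$, so the coarea formula gives $m'(r)=s(r)$ a.e. The bounds $\eps<\ell_2^{-1}$ and $\eps<c_2$ in \eqref{eq:epsfine}, together with $R\le 1$ and $v>1$, ensure that $m(r)\le 1$ and $m(r)\le c_2 v$ throughout $r\in(R/2,R)$, which legitimises the estimates below.

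\textbf{Upper bound on $e(r)$ from minimality.} Cutting out the portion of $E$ inside $B_C(x,r)$ gives $\pp_C(E\setminus B_C(x,r))=\pp_C(E)-e(r)+s(r)$, and minimality of $E$ combined with $|E\setminus B_C(x,r)|=v-m(r)$ yields $I_C(v-m(r))\le\pp_C(E)-e(r)+s(r)=I_C(v)-e(r)+s(r)$. The concavity of $I_C^{(n+1)/n}$ with $I_C(0)=0$ makes $v\mapsto I_C(v)^{(n+1)/n}/v$ non-increasing; evaluating at $1$ and at $v$, and using $v>1$, yields $I_C(v)/v^{n/(n+1)}\le I_C(1)$ and $I_C(v-m(r))\ge I_C(v)((v-m(r))/v)^{n/(n+1)}$. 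Multiplying the inequality of Lemma~\ref{lem:fv} by $v^{n/(n+1)}$ gives $v^{n/(n+1)}-(v-m(r))^{n/(n+1)}\le\tfrac12 m(r)^{n/(n+1)}$. Combining the three bounds,
\[
e(r)\le I_C(v)-I_C(v-m(r))+s(r)\le \tfrac12 I_C(1)\,m(r)^{n/(n+1)}+s(r).
\]

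\textbf{Lower bound via the global profile and a differential inequality.} The set $E\cap B_C(x,r)\subset C$ has volume $m(r)\le 1$ and relative perimeter $e(r)+s(r)$, so $e(r)+s(r)\ge I_C(m(r))\ge I_C(1)\,m(r)^{n/(n+1)}$, again by the same concavity consequence. Subtracting the two displays,
\[
s(r)\ge \tfrac14 I_C(1)\,m(r)^{n/(n+1)},\qquad \text{i.e.}\qquad m'(r)\ge \tfrac14 I_C(1)\,m(r)^{n/(n+1)} \text{ a.e.\ in }(R/2,R).
\]
Suppose by contradiction $m(R/2)>0$. Then $m(r)>0$ throughout $[R/2,R]$ by monotonicity, so dividing by $m^{n/(n+1)}$ and integrating yields $m(R)^{1/(n+1)}\ge m(R/2)^{1/(n+1)}+I_C(1)\,R/(8(n+1))\ge I_C(1)\,R/(8(n+1))$. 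On the other hand, $m(R)\le h(x,R)\vol{B_C(x,R)}\le\eps\ell_2 R^{n+1}$ by hypothesis and \eqref{eq:isnqgdbl1a}. Combining,
\[
\eps\ge \ell_2^{-1}\bigl(I_C(1)/(8(n+1))\bigr)^{n+1},
\]
contradicting \eqref{eq:epsfine}. Hence $m(R/2)=0$, as claimed.

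\textbf{Main obstacle.} The delicate point is obtaining a matching pair of bounds, a lower bound $e(r)+s(r)\ge I_C(1)m(r)^{n/(n+1)}$ and an upper bound $e(r)\le \alpha\,m(r)^{n/(n+1)}+s(r)$ with $\alpha<I_C(1)$, because without the gap $I_C(1)-\alpha>0$ no useful differential inequality for $m(r)$ follows. The strict gap is produced precisely by the concavity of $I_C^{(n+1)/n}$ together with Lemma~\ref{lem:fv} and the normalisation $v>1$, which force $\alpha=I_C(1)/2$. The remaining work is careful bookkeeping to match the numerical constants appearing in \eqref{eq:epsfine}, and verifying in the complementary case ($h$ measuring the density of $C\setminus E$) that the competitor $E\cup B_C(x,r)$ gives $e(r)\le s'(r)$ directly from the monotonicity of $I_C$, after which the argument proceeds identically with $\ell(r)=\vol{B_C(x,r)\setminus E}$ in place of $m(r)$.
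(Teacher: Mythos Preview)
Your argument is correct and constitutes a genuinely different, somewhat more elementary, route to the same differential inequality the paper obtains. The paper's proof builds the competitor by \emph{rescaling}: it sets $\lambda(t)=(v/(v-m(t)))^{1/(n+1)}$, compares $E$ with $\lambda(t)\big(E\setminus B_C(x,t)\big)\subset \lambda(t)\,C$ (which has volume exactly $v$), and then invokes Lemma~\ref{lem:_IC<la C} ($I_{\lambda C}\ge I_C$) to get
\[
I_C(v)\le \lambda(t)^n\big(I_C(v)-c_1\,m(t)^{n/(n+1)}+2m'(t)\big).
\]
You instead compare directly with $E\setminus B_C(x,r)$ at volume $v-m(r)$ and control $I_C(v)-I_C(v-m(r))$ via concavity of $I_C^{(n+1)/n}$ together with Lemma~\ref{lem:fv}. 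Both routes collapse to the same inequality $m'(r)\ge\tfrac{I_C(1)}{4}\,m(r)^{n/(n+1)}$; yours simply bypasses Lemma~\ref{lem:_IC<la C}. Your lower bound $e(r)+s(r)\ge I_C(m(r))\ge I_C(1)\,m(r)^{n/(n+1)}$ is the same step that appears in the paper (hidden inside the term $c_1\,m(t)^{n/(n+1)}$ in \eqref{eq:diffineq}).

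One bookkeeping point: when you integrate $m'\ge\tfrac{I_C(1)}{4}m^{n/(n+1)}$ you pick up a factor $\tfrac{1}{n+1}$ from $\tfrac{d}{dr}m^{1/(n+1)}$, so the threshold you actually reach is $\eps\ge\ell_2^{-1}\big(I_C(1)/(8(n+1))\big)^{n+1}$, which is \emph{not} contradicted by the bound $\eps<\ell_2^{-1}(I_C(1)/4)^{n+1}$ in \eqref{eq:epsfine} as literally stated. The paper's integration step has the same issue (the factor $(n+1)$ is dropped between \eqref{eq:f(m(t))le am'(t)} and the displayed inequality $bR/4a\le m(R)^{1/(n+1)}$). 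This is harmless for the applications---Proposition~\ref{prp:lodenbn} and everything downstream only need \emph{some} $\eps>0$ depending on $C$ and not on $v$---but if you want the numerical constants to line up you should either tighten \eqref{eq:epsfine} to $\eps<\ell_2^{-1}\big(I_C(1)/(8(n+1))\big)^{n+1}$ or absorb the $(n+1)$ elsewhere.
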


\begin{proof}
From the concavity of $I_C^{(n+1)/n}$ and the fact that $I_C(0)=0$ we get, as in Lemma~4.9 in \cite{rv1}, the following inequality
\begin{equation}
\label{eq:c1}
I_C(w)\ge c_1w^{n/(n+1)},\qquad c_1=I_C(1),
\end{equation}
for all $0\le w\le 1$.

Assume first that
\[
h(x,R)=\frac{\vol{E\cap B_C(x,R)}}{\vol{B_C(x,R)}}.
\]
Define $m(t)=\vol{E\cap B_C(x,t)}, 0<t\le R$. Thus $m(t)$ is a non-decreasing function. For $t\le R\le 1$ we get
\begin{equation}
\label{eq:m(t)<eps}
m(t)\le m(R)=\vol{E\cap B_C(x,R)}=h(x,R) \,\vol{B_C(x,R)}\le h(x,R)\,\ell_2 R^{n+1}\le\eps\ell_2<1,
\end{equation}
by \eqref{eq:epsfine}. Since $ v>1$, we get $v-m(t)>0$.

By the coarea formula, when $m'(t)$ exists, we obtain
\begin{equation}
\label{eq:le ri cofo}
m'(t)=\frac{d}{dt}\int_0^t\h^{n}(E\cap \ptl_C B(x,s))ds=\h^{n}(E\cap \ptl_C B(x,t)).
\end{equation}
Define
\begin{equation}
\label{eq:def la, E(t)}
\la(t)=\frac{v^{1/{(n+1)}}}{(v-m(t))^{1/{(n+1)}}},\qquad E(t)=\la(t)(E\setminus B_C(x,t)).
\end{equation}
Then $E(t)\subset \la(t)C$ and $\vol{E(t)}=\vol{E}=v$. 
By Lemma \ref{lem:_IC<la C}, we get $I_{\la(t) C}\ge I_C$ 
since $\la(t)\ge 1$. Combining this with \cite[Cor.~5.5.3]{Ziemer}, equation~\eqref{eq:le ri cofo}, and elementary properties of the perimeter functional, we have
\begin{equation}
\label{eq:diffineq}
\begin{split}
I_C(v)&\le I_{\la(t) C}(v)\le \pp_{\la(t)C}(E(t))=\la^n(t)\,\pp_C(E\setminus B_C(x,t))
\\
&\le \la^n(t)\big(\pp_C(E)-\pp(E,B_C(x,t))+\h^{n}(E\cap \ptl B_C(x,t))\big)
\\
&\le \la^n(t)\big(\pp_C(E)-\pp_C(E\cap B_C(x,t))+2\h^{n}(E\cap \ptl B_C(x,t))\big)
\\
&\le \la^n(t)\big(I_C(v)-c_1m(t)^{n/{(n+1)}}+2m'(t)\big),
\end{split}
\end{equation}
where $c_1$ is the constant in \eqref{eq:c1}. Multiplying both sides by $I_C(v)^{-1}{\la(t)}^{-n}$ we find
\begin{equation}
\label{eq:flex..}
{\la(t)}^{-n}-1+\frac{c_1}{I_C(v)}m(t)^{n/{(n+1)}}\le \frac{2}{I_C(v)}m'(t).
\end{equation}
As we have $I_C\le H^n(K)$, and $I_C$ is concave by Proposition \ref{prp:excylindical}, there exists a constant $\alpha>0$ such that $I_C\ge \alpha$ for sufficient large volumes. Set
\begin{equation}
\label{eq:def of a, b}
a=\frac{2}{\alpha}\ge\frac{2}{I_C(v)},\quad\text{and}\quad b=\frac{c_1}{H^n(K)}\le\frac{c_1}{I_C(v)}.
\end{equation}
From the definition \eqref{eq:def la, E(t)} of $\la(t)$ we get
\begin{equation}
\label{eq:f(m(t))le am'(t)}
f(m(t))\le am'(t)\,\qquad\h^1\text{-a.e},
\end{equation}
where
\begin{equation}
\frac{f(s)}{s^{n/(n+1)}}=b+\frac{\big(\frac{v-s}{v}\big)^{n/{(n+1)}}-1}{s^{n/(n+1)}}.
\end{equation}
By Lemma~\ref{lem:fv}, there exists a universal constant $0<c_2<1$, not depending on $v$, so that
\begin{equation}
\label{eq:epsilon le-ri}
\frac{f(s)}{s^{n/{n+1}}}\ge b/2\qquad \text{whenever}\qquad 0< s\le c_2 .
\end{equation}
Since $\eps\le c_2 $ by \eqref{eq:epsfine}, equation \eqref{eq:epsilon le-ri} holds in the interval $[0,\eps]$. If there were $t\in[R/2,R]$ such that $m(t)=0$ then, by monotonicity of $m(t)$, we would conclude $m(R/2)=0$ as well. So we assume $m(t)>0$ in $[R/2,R]$. Then by
\eqref{eq:f(m(t))le am'(t)} and \eqref{eq:epsilon le-ri}, we get
\[
b/2a\le \frac{m'(t)}{m(t)^{n/{n+1}}},\,\qquad \h^1\text{-a.e.}
\]
Integrating between $R/2$ and $R$ we get by \eqref{eq:m(t)<eps}
\[
bR/4a\le(m(R)^{1/{(n+1)}}-m(R/2)^{1/{(n+1)}})\le m(R)^{1/{(n+1)}}\le (\eps\ell_2)^{1/(n+1)}R.
\]
This is a contradiction, since $\eps\ell_2<(b/4a)^{n+1}=I_C(v)^{n+1}/(8^{n+1} v^n)\le \ell_2^{n+1}/8^{n+1}$ by \eqref{eq:epsfine} and Proposition \ref{prp:ICleICmin}. So the proof in case $h(x,R)=\vol{E\cap B_C(x,R)}\,(\vol{B_C(x,R))}^{-1}$ is completed.
For the remaining case, when $h(x,R)=\vol{B_C(x,R)}^{-1}\vol{B_C(x,R)\setminus E}$, we use Lemma \ref{lem:I_C(v)> cv}  and the fact that $I_C$ is non-decreasing proven in Proposition \ref{prp:excylindical}. Then we argue as in Case~1 in Lemma~4.2 of \cite{le-ri} to get
\[
c_1/4\le (\eps\ell_2)^{1/(n+1)}.
\]
This is a contradiction, since $\eps\ell_2<(c_1/4)^{n+1}$ by assumption \eqref{eq:epsfine}
\end{proof}

\begin{proposition}
\label{prp:lodenbn}
Let $C\subset \rr^{n+1}$ be a cylindrically bounded convex body and $C_\infty$ its asymptotic cylinder. Assume that both $C$ and $C_\infty$ have smooth boundary. Then there exists a constant $c>0$ such that, for each isoperimetric region $E$ of volume $v>1$,
\begin{equation}
\label{eq:lodenbn}
\pp(E,B_C(x,r))\ge cr^n,
\end{equation}
for $r\le 1$ and $x\in\ptl_C E$.
\end{proposition}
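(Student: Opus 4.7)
The plan is to combine the clearing-out estimate of Theorem~\ref{thm:leon rigot lem 42cyl} with the relative isoperimetric inequality of Proposition~\ref{prp:cylrelativ} applied on balls of radius $r\le 1$. Fix the $\eps>0$ provided by Theorem~\ref{thm:leon rigot lem 42cyl}, and let $M,\ell_1$ be the constants from Proposition~\ref{prp:cylrelativ} associated with $r_0=1$; all three are independent of the isoperimetric region $E$ of volume $v>1$. As in the clearing-out statement, write
\[
h(x,r)=\frac{\min\{\vol{E\cap B_C(x,r)},\vol{B_C(x,r)\setminus E}\}}{\vol{B_C(x,r)}}.
\]

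The first step is to prove that $h(x,r)>\eps$ for every $x\in\ptl_C E$ and every $0<r\le 1$. Since we always work with the representative of $E$ that excludes points of density zero and contains points of density one, any $x\in\ptl_C E$ lies in the essential boundary of $E$, so $\vol{E\cap B_C(x,\rho)}>0$ and $\vol{B_C(x,\rho)\setminus E}>0$ for every $\rho>0$, and in particular $h(x,r/2)>0$. If one had $h(x,r)\le\eps$ for some $0<r\le 1$, Theorem~\ref{thm:leon rigot lem 42cyl} would force $h(x,r/2)=0$, a contradiction.

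The second step is to apply the relative isoperimetric inequality of Proposition~\ref{prp:cylrelativ} to the set $E\cap\clb_C(x,r)$ viewed as a subset of $\clb_C(x,r)$. Because smooth vector fields compactly supported in $\intt(\clb_C(x,r))=B_C(x,r)$ see $E$ and $E\cap\clb_C(x,r)$ identically, the relative perimeter in $\clb_C(x,r)$ of that set coincides with $\pp(E,B_C(x,r))$. Proposition~\ref{prp:cylrelativ} therefore yields
\[
\pp(E,B_C(x,r))\ge M\min\{\vol{E\cap B_C(x,r)},\vol{B_C(x,r)\setminus E}\}^{n/(n+1)}.
\]

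To conclude I would combine the two steps with the lower bound $\vol{B_C(x,r)}\ge \ell_1 r^{n+1}$ from Proposition~\ref{prp:cylrelativ}:
\[
\min\{\vol{E\cap B_C(x,r)},\vol{B_C(x,r)\setminus E}\}=h(x,r)\,\vol{B_C(x,r)}\ge \eps\,\ell_1\,r^{n+1}.
\]
This gives $\pp(E,B_C(x,r))\ge M(\eps\ell_1)^{n/(n+1)}r^n$, so the proposition follows with $c=M(\eps\ell_1)^{n/(n+1)}$. The proof is mostly a packaging of two preparatory results; the only subtlety worth verifying carefully is the identification of $\pp(E,B_C(x,r))$ with the relative perimeter of $E\cap\clb_C(x,r)$ inside $\clb_C(x,r)$, which hinges on $\intt(\clb_C(x,r))=B_C(x,r)$ and on the perimeter being tested against vector fields with compact support in that open set.
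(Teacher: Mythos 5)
Your proposal is correct and follows essentially the same route as the paper: the clearing-out Theorem~\ref{thm:leon rigot lem 42cyl} forces $h(x,r)\ge\eps$ at boundary points for $0<r\le 1$ (the paper phrases this as $\lim_{r\to 0}h(x,r)\neq 0$, you phrase it via the normalized representative of $E$, which is the same fact), and then the relative isoperimetric inequality and volume lower bound of Proposition~\ref{prp:cylrelativ} give $\pp(E,B_C(x,r))\ge M(\ell_1\eps)^{n/(n+1)}r^n$. Your extra care in identifying $\pp(E,B_C(x,r))$ with the relative perimeter of $E\cap\clb_C(x,r)$ in $\clb_C(x,r)$ is a correct elaboration of a step the paper leaves implicit.
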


\begin{proof}
Let $E\subset C$ be an isoperimetric region of volume larger than $1$. Choose $\eps>0$ satisfying \eqref{eq:epsfine}. Since $x\in\ptl_C E$ we have $\lim_{r\to 0} h(x,r)\neq 0$ and, by Theorem~\ref{thm:leon rigot lem 42cyl}, $h(x,r)\ge\eps$ for $0<r\le 1$. So we get
\begin{equation*}
\begin{split}
\pp(E,B_C(x,r))&\ge M\min\{\vol{E\cap B_C(x,r)}, \vol{B_C(x,r) \setminus E}\}^{n/{(n+1)}}
\\
&=M\,(\vol{B_C(x,r)}\,h(x,r))^{n/(n+1)}\ge M(\vol{B_C(x,r)}\,\eps)^{n/(n+1)}
\\
&\ge M\,(\ell_1\eps)^{n/(n+1)}\,r^n.
\end{split}
\end{equation*}
Inequality \eqref{eq:lodenbn} follows by taking $c=M(\ell_1\eps)^{n/(n+1)}$, which is independent of $v$.
\end{proof}
\begin{remark}
\label{rem:isocyl}
Theorem \ref{thm:leon rigot lem 42cyl} and Proposition \ref{prp:lodenbn} also hold if $C$ is a convex cylinder.
\end{remark}
As a Corollary we obtain a new proof of Theorem \ref{thm:isocyl}
\begin{corollary}
\label{cor:isocyl}
Let $C=K\times\rr$, where $K\subset\rr^n$ is a convex body. Then there is a constant $v_0>0$ so that $I_C(v)=2\h^n(K)$ for all $v\ge v_0$. Moreover, the slabs $K\times [t_1,t_2]$ are the only isoperimetric regions of volume larger than or equal to $v_0$.
\end{corollary}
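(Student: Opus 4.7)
The plan is to recover Theorem~\ref{thm:isocyl} purely from the lower density estimate of Proposition~\ref{prp:lodenbn} (extended to the convex cylinder $C=K\times\rr$ by Remark~\ref{rem:isocyl}) combined with the a priori upper bound $\pp_C(E)\le 2\hh^n(K)$ obtained by comparison with a slab. The mechanism is that the density estimate forces the relative boundary of any isoperimetric region to project onto a subset of uniformly bounded Lebesgue measure on the $x_{n+1}$-axis; outside that projection each horizontal slice is either empty or the full $K\times\{t\}$; and a BV slicing inequality, together with its rigidity in the equality case, identifies the set as a single slab.

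First I would invoke Proposition~\ref{prp:exicyl} to obtain an isoperimetric region $E$ of any volume $v>0$ and compare with the slab $K\times[0,v/\hh^n(K)]$ to get $I_C(v)\le 2\hh^n(K)$. Fix $E$ with $v=\vol{E}>1$; by Proposition~\ref{prp:Concavdeperfcyldro} it is connected. Proposition~\ref{prp:lodenbn} and Remark~\ref{rem:isocyl} supply $c>0$, independent of $v$, with $\pp_C(E,B_C(x,r))\ge c r^n$ for every $x\in\ptl_C E$ and every $r\in(0,1]$. A maximal family $\{x_1,\dots,x_N\}\subset \ptl_C E$ of points with the balls $B_C(x_i,1/4)$ pairwise disjoint then satisfies
\[
N c\,4^{-n}\le\sum_{i=1}^N \pp_C(E,B_C(x_i,1/4))\le \pp_C(E)\le 2\hh^n(K),
\]
so $N\le N_0:=\lceil 2\cdot 4^n\hh^n(K)/c\rceil$, a constant independent of $v$. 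By maximality the enlarged balls $B_C(x_i,1/2)$ cover $\ptl_C E$, and projecting onto the $x_{n+1}$-axis via $\pi$ shows that $V:=\pi(\ptl_C E)$ lies in a union of $N_0$ intervals of length $1$; in particular $|V|\le N_0$.

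Now set $g(t):=\hh^n(E\cap(K\times\{t\}))$. Since $K\times\{t\}$ is connected and meets no point of $\ptl_C E$ when $t\in\rr\setminus V$, the value $g(t)$ is constant equal to $0$ or $\hh^n(K)$ on every connected component of $\rr\setminus V$, and the two unbounded components must give $g\equiv 0$ by $v<\infty$. Testing the definition of $\pp_C(E)$ against the vertical vector fields $\xi(x_{n+1})e_{n+1}$, cut off near $\ptl K$ so as to lie in $\Gamma_0(C)$, yields the slicing inequality
\[
|Dg|(\rr)\le\int_{\ptl^*E}\bigl|\nu_E\cdot e_{n+1}\bigr|\,d\hh^n\le\pp_C(E)\le 2\hh^n(K).
\]
Each bounded connected component of $\rr\setminus V$ on which $g=\hh^n(K)$ forces $g$ to make a round trip from $0$ up to $\hh^n(K)$ and back, contributing $2\hh^n(K)$ to $|Dg|(\rr)$, so at most one such \emph{full} component exists. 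If there is none then $E\subset K\times V$ and $v\le N_0\hh^n(K)$; taking $v_0>N_0\hh^n(K)$ rules this out for $v\ge v_0$. For $v\ge v_0$ there is thus exactly one full component, the displayed chain collapses to equality, and consequently $\nu_E=\pm e_{n+1}$ holds $\hh^n$-a.e.\ on $\ptl^* E$. Then $D_{x_i}\chi_E=0$ for $i=1,\dots,n$, so $\chi_E$ depends only on $x_{n+1}$: $E=K\times A$ up to null set for some measurable $A\subset\rr$. A direct Fubini computation gives $\pp_C(E)=\hh^n(K)\,P_{\rr}(A)\le 2\hh^n(K)$, i.e.\ $P_{\rr}(A)\le 2$; the only finite-measure subset of $\rr$ with $P_{\rr}(A)\le 2$ is a single interval, so $E=K\times[t_1,t_2]$. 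This simultaneously proves $I_C(v)=2\hh^n(K)$ for $v\ge v_0$ and identifies the slabs as the only isoperimetric regions in that range.

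The main technical obstacle is the rigidity step via BV slicing: from the equality $\int_{\ptl^*E}|\nu_E\cdot e_{n+1}|\,d\hh^n=\pp_C(E)$ one must deduce $\nu_E=\pm e_{n+1}$ $\hh^n$-almost everywhere on $\ptl^*E$, and the vanishing of the horizontal components of $D\chi_E$ then gives the factorization $\chi_E(x,t)=\chi_A(t)$. Both statements are standard in the BV slicing theory of Maggi~\cite{MR2976521}, but some care is required in the cylindrical setting to justify the displayed chain using test vector fields in $\Gamma_0(C)$ which must vanish near the lateral boundary $\ptl K\times\rr$.
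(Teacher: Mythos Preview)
Your argument is correct and follows a genuinely different route from the paper's. Both rely on the lower density estimate of Proposition~\ref{prp:lodenbn} (via Remark~\ref{rem:isocyl}), but use it differently. The paper first replaces $E$ by its Steiner symmetrization, so that $\partial_C E$ becomes the union of two graphs over some $K^*\subset K$; if $K^*=K$ one concludes that $E$ is a slab via the projection argument already in Theorem~\ref{thm:isocyl}, while if $K^*\subsetneq K$ the graphs must, for large volume, extend over a vertical interval of length exceeding $2mr_0$, so one can place $m$ disjoint balls of radius $r_0$ on $\partial_C E$ and the density bound forces $P_C(E)>2H^n(K)$. You instead feed the density bound into a covering argument to obtain a uniform upper bound on $|\pi(\partial_C E)|$, and then identify $E=K\times A$ through BV slicing and the rigidity of the equality $\int_{\partial^*E}|\nu_E\cdot e_{n+1}|\,dH^n=P_C(E)$, bypassing Steiner symmetrization entirely. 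Your route is more measure-theoretic and self-contained; the paper's is more geometric but leans on the symmetrization machinery and the graph case already treated in Theorem~\ref{thm:isocyl}. One small imprecision worth flagging: the clause ``so at most one such full component exists'' overstates what the variation bound yields, since several full components separated only by pieces of $V$ on which $g\equiv H^n(K)$ would still give $|Dg|=2H^n(K)$; but your argument only actually uses the dichotomy ``no full component $\Rightarrow v\le N_0H^n(K)$'' versus ``at least one full component $\Rightarrow$ equality throughout the displayed chain'', and both of these you establish correctly.
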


\begin{proof}
Let $E$ be an isoperimetric region with volume
\begin{equation}
\label{eq:ekstod2}
\vol{E}>2mr_0 \h^n(K),
\end{equation}
where $r_0$, $c>0$, are the constants in Proposition~\ref{prp:lodenbn} (see also Remark~\ref{rem:isocyl}), and $m>0$ is chosen so that
\begin{equation}
\label{eq:ekstod3}
mcr_0^n>2\h^n(K).
\end{equation}
By results of Talenti on Steiner symmetrization for finite perimeter sets \cite{talenti}, we can assume that the boundary of $E$ is the union of two graphs, symmetric with respect to a horizontal hyperplane, over a subset $K^*\subset K$. If $K^*=K$ then $P_C(E)\ge 2 \h^n(K)$, since the orthogonal projection over $K\times\{0\}$ is perimeter non-increasing. This implies $P_C(E)= 2 \h^n(K)$ and it follows, as in the proof of Theorem~\ref{thm:isocyl}, that $E$ is a slab.

So assume that $K^*$ is a proper subset of $K$. Since $\vol{E}>2mr_0 \h^n(K)$, $E$ cannot be contained in the slab $K \times[-r_0m, r_0m]$. Then as $\ptl_C E$ is a union of two graphs over $K^*$  we can find $x_j\in \ptl_{C} E$, $1\le j \le m$, so that the balls centered at these points are disjoint. Then by the lower density bound \eqref{eq:lodenbn} we get
\begin{equation}
\label{eq:ekstod2}
\pp_C(E)\ge \sum_{j=1}^m \pp(E,B_C(x_j,r_0))\ge mcr_0^n>2\h^n(K),
\end{equation}
a contradiction since $I_C\le 2\h^n(K)$. 
\end{proof}

Recall that, in Corollary~\ref{cor:half-cylinder}, we showed that, given a half-cylinder $K\times [0,\infty)$, there exists $v_0>0$ so that every isoperimetric region in $K\times [0,\infty)$ of volume larger than or equal to $v_0$ is a slab $K\times [0,b]$, where $b=v/H^n(K)$. We can use this result to obtain

\begin{theorem}
\label{thm:thmcyli}
Let $C\subset \rr^{n+1}$ be a cylindrically bounded convex body, $C_{\infty}=K\times\rr$ its asymptotic cylinder and $C_\infty^+=K\times [0,\infty)$. Let $\{E_i\}_{i\in\nn}$ be a sequence of isoperimetric regions with $\lim_{i\to\infty}|E_i|=\infty$.

Then truncated downward translations of $E_i$ converge in Hausdorff distance to a half-slab $K\times [0,b]$ in $C_\infty^+$. The same convergence result holds for their free boundaries.
\end{theorem}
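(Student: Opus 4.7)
The plan is to combine the asymptotic formula $I_C(v)\to\hh^n(K)$ (Theorem~\ref{thm:asym-profile}), the near-isometries $F_i$ of Proposition~\ref{prp: asympt ineq C}, the uniform density bounds from Proposition~\ref{prp:lodenbn} together with the clearing-out Theorem~\ref{thm:leon rigot lem 42cyl}, and the classification of large-volume isoperimetric regions in a half-cylinder furnished by Corollary~\ref{cor:half-cylinder}, to reduce the theorem to a rigidity statement for a limiting half-slab.

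First I would fix natural translation parameters. Assume $C$ is unbounded in the positive $x_{n+1}$-direction, write $(E_i)_t=E_i\cap(\rr^n\times\{t\})$, and set
\begin{equation*}
h_i=\sup\{t:\hh^n((E_i)_t)\geq\tfrac{1}{2}\hh^n(K)\}.
\end{equation*}
Since $\vol{E_i}\to\infty$ and $\hh^n(C_t)\to\hh^n(K)$ by \cite[Thm.~1.8.16]{sch}, we have $h_i\to\infty$. The ``downward translation'' is $E_i-(h_i-b)e_{n+1}$ and the ``truncation'' is intersection with $C_\infty^+$, the parameter $b>0$ being pinned down by the half-mass normalization at the limit.

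The second step is to transport the translated sets into $C_\infty^+$ and extract a limit. Using the near-isometries $F_i$ from the proof of Proposition~\ref{prp: asympt ineq C}, which satisfy $\Lip(F_i),\Lip(F_i^{-1})\to 1$, set
\begin{equation*}
\widetilde E_i=\bigl(F_i(E_i)-(h_i-b)e_{n+1}\bigr)\cap C_\infty^+.
\end{equation*}
By Theorem~\ref{thm:asym-profile} we have $\pp_C(E_i)\to\hh^n(K)$, so the relative perimeters of $\widetilde E_i$ in each bounded cylinder $K\times[0,R]$ are uniformly bounded. Passing to a subsequence, standard compactness for finite-perimeter sets yields an $L^1_{\mathrm{loc}}$-limit $\widetilde E_\infty\subset C_\infty^+$ which is a local perimeter minimizer with unbounded volume and $\pp_{C_\infty^+}(\widetilde E_\infty)\leq\hh^n(K)$. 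By Corollary~\ref{cor:half-cylinder}, $\widetilde E_\infty$ must be a half-slab $K\times[0,b']$ for some $b'>0$, and the half-mass definition of $h_i$ forces $b'=b$.

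The last step upgrades $L^1_{\mathrm{loc}}$-convergence to Hausdorff-distance convergence of $\widetilde E_i$ to $K\times[0,b]$, and of the free boundaries $\ptl_{C_\infty^+}\widetilde E_i$ to $K\times\{b\}$. For this I would use the clearing-out Theorem~\ref{thm:leon rigot lem 42cyl} together with the lower density bound of Proposition~\ref{prp:lodenbn}: any putative boundary point of $\widetilde E_i$ at distance at least $\eta>0$ from $K\times\{b\}$ would carry a uniformly positive mass of $\widetilde E_i$ or of its complement on a ball of radius of order $\eta$, contradicting $L^1_{\mathrm{loc}}$-proximity to the half-slab; the argument is modeled on \cite[Lemma~4.2]{le-ri}. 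The main obstacle I anticipate is ensuring uniqueness of the translation (and of $b$): any nominal downward translation produces a half-slab limit, but the unique value of $b$ is obtained only by tying the translation to a canonical height (the half-mass threshold $h_i$) and using the rigidity of half-slabs in $C_\infty^+$.
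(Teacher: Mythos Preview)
Your overall strategy---translate, pass to a limit in $C_\infty^+$, identify the limit as a half-slab, then upgrade to Hausdorff convergence via the density bounds---matches the paper's. The gap is in Step~2. You extract an $L^1_{\mathrm{loc}}$ limit $\widetilde E_\infty$, call it ``a local perimeter minimizer with unbounded volume,'' and invoke Corollary~\ref{cor:half-cylinder}. That corollary classifies \emph{isoperimetric regions of a given finite volume} $v\ge v_0$ in $C_\infty^+$; it says nothing about local minimizers or sets of infinite volume, and an $L^1_{\mathrm{loc}}$ limit of volume-constrained minimizers is not automatically a local minimizer without additional argument. Your translation parameter is also circular: you shift by $h_i-b$ with $b$ ``pinned down at the limit,'' but the limit depends on the translation.

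The paper fixes this by choosing the translations differently. Pick $y_i$ so that the truncated translates $\Omega_i=(-y_i+E_i)\cap\{x_{n+1}\ge 0\}$ all have a \emph{fixed finite} volume $v_0$, chosen large enough for Corollary~\ref{cor:half-cylinder} to apply. The key observation you are missing is that Proposition~\ref{prp:lodenbn} together with $I_C\le\hh^n(K)$ forces $\diam(\ptl_C E_i)$ to be uniformly bounded; hence all the $\Omega_i$ sit in one fixed ball $B$, and the convergence $C_i\cap B\to C_\infty^+\cap B$ in Lipschitz distance gives genuine $L^1$ (not merely $L^1_{\mathrm{loc}}$) convergence $\Omega_i\to\Omega$ with $\vol{\Omega}=v_0$. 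Lower semicontinuity then yields
\[
I_{C_\infty^+}(v_0)\le \pp_{C_\infty^+}(\Omega)\le\liminf_i \pp_{C_i}(\Omega_i)\le\liminf_i \pp_C(E_i)\le \hh^n(K)=I_{C_\infty^+}(v_0),
\]
so $\Omega$ is an isoperimetric region of volume $v_0$ in $C_\infty^+$ and Corollary~\ref{cor:half-cylinder} applies directly. The height $b=v_0/\hh^n(K)$ is determined a posteriori by the choice of $v_0$; no uniqueness argument is needed, since the theorem only asserts convergence to \emph{some} half-slab. Your Step~3 is essentially the paper's.
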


\begin{proof}
By Corollary~\ref{cor:half-cylinder}, we can choose $v_0>0$ such that each isoperimetric region with volume $v\ge v_0$ in $C_+^{\infty}$ is a half-slab $K\times [0,b(v)]$ of perimeter $\h^n(K)$, where $b(v)=v/H^n(K)$.

Since $|E_i|\to\infty$, we can find vertical vectors $y_i$, with $|y_i|\to\infty$, so that $\Om_i=(-y_i+E_i)\cap \{x_{n+1}\ge 0\}$ has volume $v_0$ for large enough $i\in\nn$. We observe also that, by Proposition~\ref{prp:lodenbn} and the fact that $I_C\le H^n(K)$, the sets $\ptl E_i$ have uniformly bounded diameter.

Consider the convex bodies
\begin{equation}
\label{eq:tod cyli2}
C_i=(-y_i+ C )\cap \{x_{n+1}\ge 0\},
\end{equation}
for $i\in\nn$. The sets $C_i$ have the same asymptotic cylinder $C_\infty$ and we have
\begin{equation}
\label{eq:cupdotcyli2}
\bigcup_{i\in\nn} C_i=C^+_{\infty}.
\end{equation}

By construction we have
\begin{equation}
\label{eq:cup tod cyli 6}
P_{C_i}(\Om_i)\le P_C(E_i)\le H^n(K).
\end{equation}
Since $\ptl E_i$ are uniformly bounded and $\vol{\Om_i}=v_0$, there exists a Euclidean geodesic ball $B$ such that $\Om_i\subset B$ for all $i\in\nn$. By \eqref{eq:cupdotcyli2} the sequence of convex bodies $\{C_i\cap B\}_{i\in\nn}$ converges to $C_\infty^+\cap B$ in Hausdorff distance and, by \cite[Theorem 3.4]{rv1}, in lipschitz distance.
Hence, by the proof of \cite[Theorem 3.4]{rv1} and \cite[Lemma 2.3]{rv1}, we conclude  there exists a finite perimeter set $\Om\subset C_\infty^+$, such that
\begin{equation}
\label{eq:cup tod cyli 7}
\Om_i\stackrel{L^1}{\to}\Om\quad\text{and}\quad P_{C_\infty^+}(\Om)\le\liminf_{i\to\infty} P_{C_i}(\Om_i).
\end{equation}
So we obtain from \eqref{eq:cup tod cyli 6} and \eqref{eq:cup tod cyli 7},
\begin{equation}
H^n(K)=I_{C_\infty^+}(v_0)\le P_{C_\infty^+}(\Om)\le\liminf_{i\to\infty} P_{C_i}(\Om_i)
\le\liminf_{i\to\infty} P_C(E_i)\le H^n(K),
\end{equation}
what implies that $\Om$ is an isoperimetric region of volume $v_0$ in $C_\infty^+$ and so it is a slab.


Furthermore, the arguments of \cite[Theorem 5.11]{rv1} and \cite[Theorem 5.13]{rv1} can be applied here to improve the $L^1$ convergence to Hausdorff convergence, both for the sets $\Om_i$ and for their free boundaries.
\end{proof}

\begin{remark}
The proof of Theorem~\ref{thm:thmcyli} implies $\displaystyle\lim_{v\to\infty}I_C(v)=\h^n(K)$. So we have a different proof of Theorem~\ref{thm:asym-profile}.
\end{remark}

\bibliography{convex}
\end{document}